\newtheorem{thm}{Theorem}
\newtheorem{lem}{Lemma}
\newtheorem{cor}{Corollary}
\newcommand{\sabs}[1]{\left|#1\right|}
\newcommand{\sparen}[1]{\left(#1\right)}
\newcommand{\norm}[1]{\sabs{\sabs{#1}}}
\numberwithin{equation}{section}
\title{Stable determination of X-ray transforms of time dependent potentials from partial boundary data}
\author{Alden Waters,\\CNRS Ecole Normale Superieure, Paris\\ alden.waters@gmail.com}
\begin{document}

\maketitle

\begin{abstract}
We consider compact smooth Riemmanian manifolds with boundary of dimension greater than or equal to two. For the initial-boundary value problem for the wave equation with a lower order term $q(t,x)$, we can recover the X-ray transform of time dependent potentials $q(t,x)$ from the dynamical Dirichlet-to-Neumann map in a stable way. We derive conditional H\"older stability estimates for the X-ray transform of $q(t,x)$. The essential technique involved is the Gaussian beam Ansatz, and the proofs are done with the minimal assumptions on the geometry for the Ansatz to be well-defined.  
\end{abstract}

Keywords: Inverse Problems, Partial Data, Wave Equations, Radon Transforms

MSC codes: 35R01, 35R30, 35L20, 58J45, 35A22

\section{Introduction to notational conventions}
We consider a Riemannian manifold $\mathcal{M}$ equipped with a metric $g$. We use the standard Einstein summation convention for the rest of this paper.  We let $\Delta_g$ denote the Laplace-Beltrami operator, which we write as
\begin{align}
\Delta_g=\frac{1}{\sqrt{\det g(x)}}\frac{\partial}{\partial x^k}\sparen{g^{ki}(x)\sqrt{\det g(x)}\frac{\partial}{\partial x^i}}
\end{align}
in local coordinates with $g(x)=(g_{ik}(x))$, and $(g^{ik}(x))=(g_{ki}(x))^{-1}$. We consider manifolds, $\mathcal{M}$, which are smooth ($C^{\infty})$. The local coordinates we abbreviate as $(x_1, . . ,x_n)$. We also assume the manifolds have a boundary.  

For this paper, we use many of the notational conventions in \cite{DDSF}. We let $(\frac{\partial}{\partial x^1}, . . ,\frac{\partial}{\partial x^n})$ denote the tangent vector fields so that the corresponding the inner product and norm on the tangent space $T_x\mathcal{M}$ are denoted by 
\begin{align*}
g(X,Y)=\langle X,Y\rangle_g=g_{jk}\alpha_j\beta_k
\end{align*}
\begin{align*}
|X|_g=\langle X,X \rangle_g^{\frac{1}{2}}, \qquad X=\alpha_i\frac{\partial}{\partial x_i} \qquad Y=\beta_i\frac{\partial}{\partial x_i}.
\end{align*}
Whenever $f$ is a $C^1$ function on $\mathcal{M}$, the gradient of $f$ is defined as the vector field $\nabla_g f$ so that $\forall X$ on $\mathcal{M}$ we have
\begin{align*}
X(f)=\langle \nabla_g f,X\rangle_g. 
\end{align*}
In local coordinates, we can write
\begin{align*}
\nabla_g f=g^{ij}\frac{\partial f}{\partial x_i}\frac{\partial}{\partial x_j}.
\end{align*}
The metric tensor induces a Riemannian volume form which as in \cite{DDSF} we denote by, 
\begin{align*}
d_gV=(\det g)^{\frac{1}{2}}dx_1\wedge . . . \wedge dx_n.
\end{align*}
The space $L^2(\mathcal{M})$ is the completion of $C^{\infty}(\mathcal{M})$ with respect to the inner product
\begin{align*}
\langle f_1,f_2\rangle=\int\limits_{\mathcal{M}}f_1(x)f_2(x)\,d_gV \qquad f_1,f_2\in \mathcal{C}^{\infty}(\mathcal{M}).
\end{align*}
We can define the Sobolev spaces for the manifolds analogously to the Euclidean Sobolev norms, so that 
\begin{align*}
\norm{f}^2_{H^1(\mathcal{M})}=\norm{f}_{L^2(\mathcal{M})}^2+\norm{\nabla f}_{L^2(\mathcal{M})}^2.
\end{align*}
With these definitions in mind, we consider the solutions $u(t,x)$ to the initial-boundary value problem
\begin{align}\label{hyperbolic1}
&(\Box_g+q(t,x))u(t,x)=F(t,x) \quad\,\,\, \mathrm{on} \qquad (0,T)\times \mathcal{M}
\\&u(t,x)|_{t=0}=\partial_t u(t,x)|_{t=0}=0 \qquad \mathrm{in} \qquad \mathcal{M} \nonumber
\\&u(t,x)=f(t,x) \qquad \qquad \quad \mathrm{on} \qquad (0,T) \times \partial\mathcal{M} \nonumber
\end{align} 
where 
\begin{align*}
\Box_g=\partial_t^2-\Delta_g.
\end{align*}
We know that this problem is well-posed, since we have the following existence and uniqueness result, see \cite{lions} or \cite{DDSF}
\begin{lem}\label{wellposed}
Assuming $f(t,x)\in H_0^1([0,T]\times \partial M)$, $F(t,x)\in L^1([0,T];L^2(\mathcal{M}))$, and $q(t,x)\in C^{0}([0,T]\times\mathcal{M})$ 
 then there exists a unique solution $u(t,x)$ to (\ref{hyperbolic1}), such that
 \begin{align*}
 u(t,x)\in C([0,T];H^1_0(\mathcal{M}))\cap C^1([0,T]; L^2(\mathcal{M}))
 \end{align*}
with norm bounds
\begin{align}\label{normbound}
&\sup_{t\in [0,T]}\sparen{\norm{u(t,x)}_{H^1(\mathcal{M})}+\norm{\partial_tu(t,x)}_{L^2({\mathcal{M})}}}
 \leq \\&
  C\sparen{\norm{F(t,x)}_{L^1([0,T];L^2(\mathcal{M}))} +\norm{f(t,x)}_{H^1_0([0,T]\times \partial M)}}
  \nonumber
\end{align}
where the constant $C$ is independent of $F(t,x)$ and $f(t,x)$ but depends on $\norm{q(t,x)}_{C^{0}([0,T]\times\mathcal{M})}$. 
\end{lem}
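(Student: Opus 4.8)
The plan is to establish existence together with the energy bound (\ref{normbound}) in two stages: first for homogeneous boundary data $f=0$, by a Galerkin construction combined with an energy identity, and then for general $f$ by lifting the boundary data into the interior, following the transposition method of \cite{lions}. Uniqueness will come for free at the end, because the difference of two solutions of (\ref{hyperbolic1}) solves the same problem with $F=0$ and $f=0$, so the energy estimate forces it to vanish.

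For the homogeneous case I would use the spectral decomposition of the Dirichlet Laplace--Beltrami operator $-\Delta_g$ on $\mathcal{M}$, with orthonormal eigenfunctions $\{\phi_k\}$ and eigenvalues $\{\lambda_k\}$, and seek approximate solutions $u_N(t,x)=\sum_{k=1}^N c_k(t)\phi_k(x)$. Substituting into $(\Box_g+q)u_N=F$ and projecting onto $\mathrm{span}\{\phi_1,\dots,\phi_N\}$ yields a linear second-order ODE system for the $c_k(t)$ with zero initial data, solvable on $[0,T]$ since $q\in C^0$. The heart of the argument is the energy identity: testing the equation against $\partial_t u_N$ and integrating by parts over $\mathcal{M}$ gives, for $E_N(t)=\tfrac12\big(\norm{\partial_t u_N(t)}_{L^2(\mathcal{M})}^2+\norm{\nabla u_N(t)}_{L^2(\mathcal{M})}^2\big)$, a differential inequality of the form $\tfrac{d}{dt}E_N(t)\le \norm{F(t)}_{L^2(\mathcal{M})}E_N(t)^{1/2}+\norm{q}_{C^0}E_N(t)$, where the boundary contributions vanish because $u_N$ has zero Dirichlet trace. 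Integrating and applying Gr\"onwall's inequality bounds $\sup_{t\in[0,T]}E_N(t)$ by $C\norm{F}_{L^1([0,T];L^2(\mathcal{M}))}$ with $C$ depending only on $T$ and $\norm{q}_{C^0([0,T]\times\mathcal{M})}$; this bound is uniform in $N$, so weak-$*$ compactness lets me pass to the limit and obtain a solution in $C([0,T];H_0^1(\mathcal{M}))\cap C^1([0,T];L^2(\mathcal{M}))$ satisfying (\ref{normbound}) with $f=0$.

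For general boundary data I would fix a bounded lifting $\tilde f\in H^1((0,T)\times\mathcal{M})$ of $f\in H_0^1([0,T]\times\partial\mathcal{M})$, vanishing for $t$ near $0$, and set $v=u-\tilde f$, so that $v$ solves the homogeneous-boundary problem with source $G=F-(\Box_g+q)\tilde f$ and zero initial data. The main obstacle is that $\Box_g\tilde f$ contains second derivatives of a merely $H^1$ function, so $G$ lies only in a negative-order space rather than in $L^1([0,T];L^2(\mathcal{M}))$, and the Galerkin estimate above does not apply directly; indeed, since $f$ is only $H^1$ on the boundary, trace theory forbids an $H^2$ lift. I would resolve this by defining $u$ weakly through its pairing against solutions $w$ of the adjoint backward problem $(\Box_g+q)w=\psi$, $w|_{t=T}=\partial_t w|_{t=T}=0$, $w|_{\partial\mathcal{M}}=0$, using Green's formula to move every derivative onto $w$. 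The decisive input is a hidden boundary regularity estimate for $w$ of Rellich/multiplier type, namely $\norm{\partial_\nu w}_{L^2((0,T)\times\partial\mathcal{M})}\le C\norm{\psi}_{L^1([0,T];L^2(\mathcal{M}))}$, which is exactly what makes the pairing with $f$ well-defined and bounded.

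Finally, to upgrade from the low-regularity solution (which this pairing defines already for $f\in L^2$) to the claimed finite-energy class, I would use that $f\in H_0^1$ has $\partial_t f\in L^2$ and differentiate the transposition identity in $t$ (equivalently, interpolate between the $L^2$-data estimate and a smoother-data estimate), recovering $u\in C([0,T];H^1_0(\mathcal{M}))\cap C^1([0,T];L^2(\mathcal{M}))$ together with (\ref{normbound}); the constant once more tracks $\norm{q}_{C^0}$ through Gr\"onwall. I expect the hidden-regularity step for the adjoint problem to be the genuinely delicate point, since it is precisely there that the limited $H^1$ regularity of the boundary data is absorbed, and it is the step most sensitive to the minimal geometric hypotheses placed on $\mathcal{M}$.
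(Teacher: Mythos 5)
The paper does not actually prove this lemma; it quotes it as a known well-posedness result, pointing to \cite{lions} and \cite{DDSF} for the proof. Your outline --- Galerkin approximation plus Gr\"onwall energy estimates for the case $f=0$, then Lions' transposition method for nonzero Dirichlet data, with the hidden Rellich-multiplier regularity $\norm{\partial_\nu w}_{L^2((0,T)\times\partial\mathcal{M})}\leq C\norm{\psi}_{L^1([0,T];L^2(\mathcal{M}))}$ for the backward adjoint problem as the key input, followed by a time-differentiation/interpolation upgrade for $H^1_0$ boundary data --- is exactly the standard argument in those cited references, so your proposal is correct and coincides with the approach the paper relies on.
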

Since the solution is well-posed, we can introduce the problem of recovering the potential from the dynamical Dirichlet-to-Neumann map. We let $\nu=\nu(x)$ be the outer unit normal to $\partial\mathcal{M}$ at $x$ in $\partial\mathcal{M}$ which we normalize so we have
\begin{align*}
g^{kl}(x)\nu_k(x)\nu_l(x)=1.
\end{align*}
The dynamical Dirichlet-to-Neumann map, $\Lambda_{g,q}$, is defined by 
\begin{align*}
\Lambda_{g,q}f(t,x)= \nu_k(x)g^{kl}(x)\frac{\partial u}{\partial x_l}(t,x)|_{\sparen{(0,T)\times \partial \mathcal{M}}}.
\end{align*}
The natural norm on the Dirichlet-to-Neumann map is the operator norm from 
\begin{align*}
H_0^1(\partial\mathcal{M}\times(0,T))\rightarrow L^2(\partial\mathcal{M}\times(0,T))
\end{align*}
which we denote by 
\begin{align*}
\norm{\cdot}_{H_0^1\rightarrow L^2}.
\end{align*}
It follows from Lemma \ref{wellposed} that $\Lambda_{g,q}$ is bounded as a linear operator whenever $f(t,x)$ is in $H^1_0([0,T]\times\partial\mathcal{M})$, $F(t,x)\in L^1([0,T];L^2(\mathcal{M}))$, and $q(t,x)\in C^{0}([0,T]\times\mathcal{M})$. We know that $H_0^1((0,T)\times \partial\mathcal{M})$ is the completion of the space of $C_c^{\infty}((0,T)\times \partial\mathcal{M})$ functions with respect to the appropriate inner product. We mention this because in Lemma \ref{error} it is essential that the input of the Dirichlet-to-Neumann maps have compact support in the boundary cylinder. 

The question that this paper seeks to address is if we know the Dirichlet-to-Neumann maps of two different potentials, $q_1(t,x)$ and $q_2(t,x)$ what information about the X-ray transform of their difference can we gain?  Recent work on stability estimates by Bellasoued and Dos Santos Ferreira, \cite{DDSF} builds on the work by Kenig et al. \cite{dks}, which considers only elliptic Schr\"odinger operators. The work here will be largely inspired by \cite{DDSF} and Section 7 of Kenig and Salo \cite{ks}. We extend their constructions by considering a more general geometry than in \cite{DDSF} for the hyperbolic problem, and we consider the case of time dependent potentials. 

For this paper we assume that the manifold $\mathcal{M}$ is an arbitrary smooth compact Riemannian manifold with smooth boundary of dimension $n\geq 2$. The goal of this paper is to show that we can recover integrals over geodesics of potentials $q(t,x)$- the X-ray transform of potentials on $\mathcal{M}$, in a stable way from the dynamical Dirichlet-to-Neumann map. We allow work in integral geometry to tell us which class for which class of manifolds and admissible potentials we have stability and uniqueness results. We direct the reader to the preprint Uhlmann and Vasy \cite{vasy} for recent injectivity and stability results on the X-ray transform. Future works by integral geometers regarding X-ray transforms will produce better stability results of the potentials. 

In Eskin \cite{eskin1}, \cite{eskin2}, \cite{eskin3}, uses the boundary control method first introduced by Belishev \cite{belishev}, and Belishev and Kurylev \cite{bk}. Eskin's methods in \cite{eskin3} require the potential to be analytic in time.  For a survey on the literature of the boundary control method, and explanation of the techniques, one should see the monograph by Lassas et. al, \cite{LKK}. For some stability and uniqueness results for other partial differential equations with time-independent coefficients, see for example \cite{bj1}, \cite{bj2}, \cite{street}, \cite{ammari}, \cite{Klibanov} and \cite{Bukhgeim}.  Here the author has chosen to focus on references which are related to work on the X-ray transform for uniqueness and stability estimates for the hyperbolic problem. 

Using Green's theorem as in Alessandrini and Sylvester, \cite{as} and Sylvester and Uhlmann, \cite{su} and complex geometric optics to produce the X-ray transform, we derive stability results for the X-ray transform of potentials $q(t,x)$. The author uses only the minimal amount of assumptions on the geometry for the Gaussian beam Ansatz to be well-defined. In particular, the metric must be at least three times differentiable, an assumption which was also used in \cite{bao}. It seems likely that using the same techniques developed by the author in \cite{waters}, wave equations with  $C^{1,1}(t,x)$ coefficients could also be examined.

The study of the initial boundary value problem (\ref{hyperbolic1}) has a long history, and these results are formulated building on the results of others. For references in this direction, we direct the reader to the articles by Isakov \cite{isakov1}, \cite{isakov2} and Sun \cite{sun1}, and Isakov and Sun \cite{sun2}. Using X-ray transform methods, the first uniqueness result for time dependent potentials for wave equations was established by Stefanov in \cite{stefanov1} using the scattering relation when the geometry is Euclidean. Later, Sj\"ostrand and Ramm in $\mathbb{R}^n$ in \cite{sjo} established uniqueness results for time dependent potentials using the standard real-phase geometric optics Ansatz.

In \cite{DDSF} an extension of the techniques in \cite{sjo} and those of \cite{dks} to produce stability estimates for $q(x)$ and conformal factors $c(x)$ when considering simple manifolds and time independent lower order coefficients. Part of the results by Stefanov and Uhlmann in \cite{stefanovstable}, show that its possible to recover the conformal factor in a stable way when the manifold is simple. The first uniqueness results in this direction, using X-ray transform methods, are given by Rakesh \cite{rakesh} and Rakesh and Symes \cite{rakeshsymes}. Montalto in \cite{M} recovers the metric, conformal factor and lower order terms simultaneously for from the Dirichlet-to-Neumann (DN) map. He is able to conclude H\"older stability estimates for these coefficients from the DN-map for simple manifolds. Naturally in some places the construction will be similar to \cite{M}.

In \cite{DDSF} the assumption the manifold is simple is essential and used often in $\cite{stefanovstable}$ and $\cite{M}$. However, stability estimates for the X-ray transform of time dependent potentials and general geometric settings have been previously unobserved for a single measurement from the boundary at the same time. The generality of the geometry is suggested by the much earlier treatment of quasimodes by Ralston \cite{ralston77}. The author chose to follows some arguments in \cite{DDSF} closely because the goal was to replace the cutoff function in Remark 3, in \cite{DDSF}. This is done by using a good kernel argument from Stein and Shakarchi \cite{steinreal}, but was inspired by the use of the Gaussian beam Ansatz in \cite{ks} in the related elliptic case and also by \cite{bao}. However, in \cite{ks} it is essential that all of the potentials are time independent because of course, the operator is elliptic. Here we consider the case for time dependent potentials so the analysis is different. The Appendix attempts to relate the work to \cite{ks} and \cite{msd}. 

As aforementioned, Stefanov and Uhlmann in \cite{stefanovstable} proved uniqueness and stability results for simple metrics using the scattering relation. One of the main goals of their paper is to examine the boundary distance rigidity problem. The boundary rigidity problem is also examined in \cite{crokes} and \cite{croke}. If we had stronger assumptions on the stability of the X-ray transform for a more general geometry as in \cite{stefanovstability} then we could derive stability results for conformal factors as in \cite{DDSF}, \cite{dks} from the techniques in this paper. The goal here is different than \cite{stefanovstable}   because the focus is on the lower order terms. We derive H\"older stability estimates for the X-ray transform of $q(t,x)$ from the DN map by constructing solutions to the wave equation and using Green's theorem. 

\vskip 10pt
\textbf{Acknowledgements:} The author would like to thank James Ralston for his ongoing encouragement to pursue mathematics. She would like to thank Mikko Salo for his suggestion to investigate this problem in the context of his work.  The author was supported by a postdoctoral fellowship at the University of Jyv\"askyl\"a and an AXA Foundation research grant at the Institut Mittag-Leffler. The author is currently supported by CNRS.

\section{Statement of the Main Theorems}

We now introduce the definition of the X-ray transform on the manifolds we will be using. During the course of this paper we will use many of the notational conventions in \cite{DDSF}. For $x\in \mathcal{M}$ and $\omega\in T_x\mathcal{M}$ we let $\gamma_{x,\omega}$ denote the unique geodesic with initial conditions
\begin{align*}
\gamma_{x,\omega}(0)=x \qquad \dot{\gamma}_{x,\omega}(0)=\omega.
\end{align*}
We let 
\begin{align}
\mathcal{SM}=\{(x,\omega)\in T\mathcal{M};\quad |\omega|_g=1\}
\end{align}
denote the sphere bundle of $\mathcal{M}$. We let the submanifold of inner vectors of $\mathcal{SM}$ be denoted by
\begin{align*}
\partial_{+}\mathcal{SM}=\{(x,\omega)\in \mathcal{SM},\quad x\in \partial\mathcal{M},\quad +\langle\omega,\nu(x)\rangle<0\}.
\end{align*}
Let $\tau(x,\omega)$ be the length of the geodesic segment with initial conditions $(x,\omega)\in\partial_+\mathcal{SM}$. We consider the inward pointing vectors of $\mathcal{SM}$ only. We assume our manifold with boundary, $\mathcal{M}$ has the property that:
\begin{itemize}
\item There is a $T$ such that for all $(x,\omega)\in\partial_+\mathcal{SM}$ there is a $\tau(x,\omega)\leq T$ such that $\gamma_{x,\omega}(t)$ is in the interior of $\mathcal{M}$ for $0<t<\tau(x,\omega)$, and intersects the boundary $\partial\mathcal{M}$ transversally when $t=\tau(x,\omega)$.
\end{itemize}
This hypothesis is the weakest assumption on the geometry for the Gaussian beam Ansatz to be well-defined in this setting. This was also observed in \cite{bao}. 

The definition of the X-ray transform on time dependent functions $f(t,x)$ we are using is as follows:
\begin{align}\label{Xray}
I_{x,\omega} f=\int\limits_0^{\tau(x,\omega)}f(s,\gamma_{x,\omega}(s))\,ds.
\end{align}
The right hand side of (\ref{Xray}) is a smooth function on the space $\partial_{+}\mathcal{SM}$ because the integration bound $\tau(x,\omega)$ is a smooth function on $\partial_+\mathcal{SM}$.
 For convex non-trapping manifolds, the ray transform on time independent functions can be extended as a bounded operator 
\begin{align*}
I: H^k(\mathcal{M}) \rightarrow H^k(\partial_{+}\mathcal{SM})
\end{align*}
for all integers $k\geq 1$. (See Theorem 4.2.1, of \cite{shara}). In the course of this paper we will assume all the potentials $q(t,x)$ are in $C^{\infty}([0,T]\times\mathcal{M})$, so the X-ray transform for time dependent potentials is well-defined. However, we only use the assumption $q(t,x)\in C^1([0,T]\times\mathcal{M})$ in the proofs (which could possibly be lowered to $C^1([0,T]; C^0(\mathcal{M}))$). If we assumed the potentials were time independent we could reduce the assumptions on the regularity of the potentials in view of \cite{shara}. 

 The goal of this paper is to recover the X-ray transform for a time dependent potential from the dynamical Dirichlet-to-Neumann map. If we allow a diffeomorphism $\Phi: \mathcal{M}\rightarrow \mathcal{M}$ such that $\Phi|_{\partial\mathcal{M}}=\mathcal{I}d$, then we have $\Lambda_{\Phi^*g,q}=\Lambda_{g,q}$ where $\Phi^*g$ is the pullback of the metric $g$ under $\Phi$. Naturally all of the results will be formulated modulo gauge invariance. We therefore consider the metric $g$ to be fixed for the rest of this paper. We let the length of the longest maximal geodesic in $\mathcal{M}$ be denoted as diam$_g(\mathcal{M})$. We prove a type of conditional H\"older stability estimate which is related to the one for simple manifolds and time independent potentials in \cite{M}. The first theorem is then:
\begin{thm}\label{main}
For any compact Riemannian manifold satisfying the admissibility criterion above, and any initial conditions $(x,\omega)\in \partial_+\mathcal{SM}$, there exists a finite $\epsilon_0>0$, such that the condition
\begin{align}
\norm{\Lambda_{g,q_1}-\Lambda_{g,q_2}}_{H_0^1\rightarrow L^2}<\epsilon_0
\end{align}
implies 
\begin{align}
 |I_{x,\omega}(q)| \leq C\norm{\Lambda_{g,q_1}-\Lambda_{g,q_2}}_{H_0^1\rightarrow L^2}^{\beta}
\end{align}
holds. As before, $I_{x,\omega}$ denotes the geodesic X-ray transform associated to ($\mathcal{M},g$) and $q_1(t,x),q_2(t,x)\in C^{\infty}([0,T]\times\mathcal{M})$. The constant $C$ depends only on the metric $g$, the $C^1([0,T]\times\mathcal{M})$ norm of the potential $q(t,x)=q_1(t,x)-q_2(t,x)$, $T$ and diam$_g(\mathcal{M})$. We also assume $T-\mathrm{diam}_g(\mathcal{M})\geq 4\epsilon_0^{\frac{1}{2\alpha n}}$ for some $\alpha\in \mathbb{R}, \alpha>1$ $n=\dim(\mathcal{M})$, so that $T-diam_g(\mathcal{M})$ is not too large. The number $\beta$ is in the interval $(0,1)$. 
\end{thm}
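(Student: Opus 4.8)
The plan is to reduce the statement to a single integral identity relating the difference of the two Dirichlet--to--Neumann maps to a weighted integral of $q=q_1-q_2$ over $\mathcal{M}\times(0,T)$, and then to concentrate that weighted integral onto a geodesic by means of Gaussian beams. First I would fix boundary data $f$, let $u_1$ be the forward solution of \eqref{hyperbolic1} with potential $q_1$ (zero Cauchy data at $t=0$, boundary value $f$) furnished by Lemma \ref{wellposed}, and let $v$ solve $(\Box_g+q_2)v=0$ with \emph{final} Cauchy data $v|_{t=T}=\partial_tv|_{t=T}=0$. Writing $w=u_1-u_2$, where $u_2$ carries the same data as $u_1$ but potential $q_2$, one has $(\Box_g+q_2)w=-q\,u_1$ with vanishing Cauchy data at $t=0$ and vanishing boundary trace, while $\partial_\nu w=(\Lambda_{g,q_1}-\Lambda_{g,q_2})f$. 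Green's theorem (as in \cite{as,su}) then makes every temporal boundary term vanish --- at $t=0$ through $w$, at $t=T$ through $v$ --- leaving
\begin{align}\label{planid}
\int_0^T\!\!\int_{\mathcal{M}}q\,u_1 v\,d_gV\,dt=\int_0^T\!\!\int_{\partial\mathcal{M}}\big[(\Lambda_{g,q_1}-\Lambda_{g,q_2})f\big]\,v\,dS\,dt.
\end{align}
The right side is controlled by $\norm{\Lambda_{g,q_1}-\Lambda_{g,q_2}}_{H_0^1\rightarrow L^2}\,\norm{f}_{H^1_0}\,\norm{v}_{L^2(\partial\mathcal{M}\times(0,T))}$, which is exactly where the compact support requirement in the cylinder is used.

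Next I would build $u_1$ and $v$ as Gaussian beams of a large parameter $\lambda$ concentrated on the space--time curve $s\mapsto(s,\gamma_{x,\omega}(s))$, $s\in[0,\tau(x,\omega)]$, which is a null geodesic of $\partial_t^2-\Delta_g$. Taking $u_1=e^{i\lambda\phi}a_1+r_1$ and $v=e^{-i\lambda\overline{\phi}}a_2+r_2$, where $\phi$ solves the eikonal equation $(\partial_t\phi)^2=|\nabla_g\phi|_g^2$ with $\mathrm{Im}\,\phi\geq0$ vanishing to second order on the curve with positive transverse Hessian, and the $a_j$ solve the transport equations, the oscillations cancel in the product and
\begin{align}\label{planprod}
u_1 v = e^{-2\lambda\,\mathrm{Im}\,\phi}\,a_1a_2 + \big(\text{remainder in }r_1,r_2\big).
\end{align}
The factor $e^{-2\lambda\,\mathrm{Im}\,\phi}$ is a Gaussian tube of transverse width $\lambda^{-1/2}$ and plays the role of a good kernel in the sense of \cite{steinreal}: inserting \eqref{planprod} into the left side of \eqref{planid} and letting $\lambda\to\infty$ localizes the integral to the curve, producing $c\,\lambda^{-\kappa}\,I_{x,\omega}(q)$ for an explicit $\kappa>0$, up to a good-kernel error that is a strictly smaller power of $\lambda$. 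Here the minimal geometric hypothesis --- that $\gamma_{x,\omega}$ stays interior on $(0,\tau)$ and exits transversally --- is precisely what guarantees $\phi$ and the $a_j$ are well-defined along the whole segment, following Ralston \cite{ralston77} and \cite{bao}; this is the step that replaces the sharp cutoff of Remark 3 in \cite{DDSF}.

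Before optimizing I would dispose of the remainders. The beam $e^{i\lambda\phi}a_1$ is only an approximate solution: $(\Box_g+q_1)(e^{i\lambda\phi}a_1)$ is a negative power of $\lambda$ in $L^2$ once enough transport terms are retained, and at $t=0$ it is exponentially small in the interior because the beam sits at the boundary entry point. Correcting to an exact solution with the right Cauchy data via Lemma \ref{wellposed} gives $r_1$ (and likewise $r_2$, using the time margin $\tau\leq\mathrm{diam}_g(\mathcal{M})<T$ to switch the backward beam off before $t=T$) with $\norm{r_j}$ bounded by a negative power of $\lambda$. Only $C^1$ regularity of $q$ and $C^3$ of $g$ enter, as in \cite{bao,waters}, so the contribution of the $r_j$ to both sides of \eqref{planid} is again a lower-order power of $\lambda$.

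The hard part, and the last step, is the bookkeeping of $\lambda$-powers and the optimization. Combining the above, \eqref{planid} reads
\begin{align}\label{planfinal}
c\,\lambda^{-\kappa}I_{x,\omega}(q)=\langle(\Lambda_{g,q_1}-\Lambda_{g,q_2})f,\,v\rangle_{\partial\mathcal{M}\times(0,T)}+E(\lambda),
\end{align}
with $E(\lambda)=O(\lambda^{-\kappa-\delta})$ for some $\delta>0$. Estimating the boundary traces of the beams --- an extra $\lambda$ from the $H^1_0$ derivative on $f$, together with the Gaussian restricted to $\partial\mathcal{M}\times(0,T)$ --- yields $\norm{f}_{H^1_0}\norm{v}_{L^2}\lesssim\lambda^{p}$, so that
\begin{align}
\sabs{I_{x,\omega}(q)}\leq C\big(\lambda^{m}\norm{\Lambda_{g,q_1}-\Lambda_{g,q_2}}_{H_0^1\rightarrow L^2}+\lambda^{-\delta}\big)
\end{align}
for explicit $m,\delta>0$. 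Choosing $\lambda\sim\norm{\Lambda_{g,q_1}-\Lambda_{g,q_2}}_{H_0^1\rightarrow L^2}^{-1/(m+\delta)}$ balances the two terms and gives the claimed estimate with $\beta=\delta/(m+\delta)\in(0,1)$; the smallness $\epsilon_0$ and the condition $T-\mathrm{diam}_g(\mathcal{M})\geq 4\epsilon_0^{1/(2\alpha n)}$ are exactly what force the optimal $\lambda$ past the threshold where the beam construction and the good-kernel asymptotics are valid and leave room in time for the beams to switch on and off. I expect the genuine difficulty to lie in making the exponents $\kappa$, $m$ and $\delta$ explicit and uniform over $(x,\omega)\in\partial_+\mathcal{SM}$ under the weak geometric assumption, since without simplicity one cannot straighten the phase by a global diffeomorphism and must control the beam purely along the individual geodesic.
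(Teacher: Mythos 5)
Your outline follows the same route as the paper's own proof: the Green's-theorem identity relating $\int\!\!\int q\,u_1\overline{u_2}$ to the difference of DN maps (the paper's Lemma \ref{step}), complex-phase Gaussian beams concentrated on the space-time geodesic, a good-kernel localization in the sense of Stein, correction to exact solutions via Lemma \ref{wellposed}, and a final minimization in $\lambda$. There is, however, a genuine gap at the localization step. You assert that letting $\lambda\to\infty$ in the beam product "produces $c\,\lambda^{-\kappa}I_{x,\omega}(q)$" with a constant $c$. What the Gaussian (Laplace-type) asymptotics actually produce is
\begin{align*}
c\,\lambda^{-\kappa}\int_0^{\tau(x,\omega)} q\bigl(s,\gamma_{x,\omega}(s)\bigr)\,\sabs{a_1a_2}\bigl(s,\gamma_{x,\omega}(s)\bigr)\,\bigl(\det \Im M(s)\bigr)^{-\frac{1}{2}}\,ds ,
\end{align*}
where $M(s)$ is the Riccati Hessian of the phase: both the amplitudes and the transverse width of the Gaussian vary along the geodesic, so a priori one obtains a \emph{weighted} ray transform, not $I_{x,\omega}(q)$, and a bound on a weighted integral does not bound the unweighted one. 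The paper closes exactly this gap by combining Corollary \ref{size} (the transport equation forces $|a_0(t,x(t))|^2$ to be proportional to $|\det Y(0)|/|\det Y(t)|$) with the symplectic identity of Lemma \ref{LKK}, $\det\Im M(t)\,|\det Y(t)|^2=\mathrm{const}$; together with the normalization $a_0(t_0,x_0)=1$, $M(0)=iI$, these make the weight identically constant along the geodesic. Your proposal nowhere records this cancellation, and without it the argument proves a stability estimate for a different, geometry-dependent functional of $q$.

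A second, smaller defect is in the power counting. You treat the remainder and boundary-trace bounds as pure powers of $\lambda$ with constants uniform in the data, and then balance $\lambda\sim\norm{\Lambda_{g,q_1}-\Lambda_{g,q_2}}_{H_0^1\rightarrow L^2}^{-1/(m+\delta)}$ to get $\beta=\delta/(m+\delta)$. But under the hypothesis of Theorem \ref{main} the time margin $T-\mathrm{diam}_g(\mathcal{M})$ is allowed to shrink like $4\epsilon_0^{\frac{1}{2\alpha n}}$, so the space-time cutoff that switches the beams on and off must be steep, with derivatives of size $\epsilon_1^{-\frac{1}{2\alpha}}$ where $\epsilon_1=\norm{\Lambda_{g,q_1}-\Lambda_{g,q_2}}_{H_0^1\rightarrow L^2}$. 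This is why the paper's error bounds in Lemma \ref{error} and Lemma \ref{step21} carry the mixed factors $\epsilon_1^{-\frac{1}{\alpha}}\lambda^{-1}$ and $\lambda^{\sigma}\epsilon_1^{-\frac{1}{2\alpha}}\lambda^{-1/2}$ rather than pure powers of $\lambda$, and why the final minimization couples $\lambda$, $\epsilon_1$, $\alpha$ and $\sigma$ through the relation (\ref{cn}) with the constraints $\alpha>1$, $1-2l>1/\alpha$. Your cleaner balance only goes through when $T-\mathrm{diam}_g(\mathcal{M})$ is bounded below independently of $\epsilon_0$, which the paper notes in a remark is precisely the easier case.
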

We notice here that Theorem \ref{main} does not require full boundary data. 

Theorem \ref{main} can be strengthened by applying the relatively new result by Uhlmann and Vasy \cite{vasy}. Let $\mathcal{M}_c$ be a strictly geodesicaly convex subset of the manifold $\mathcal{M}$. Let $\rho\in C^{\infty}(\mathcal{M})$ be a  global defining function of the boundary $\partial\mathcal{M}_c$, as considered as a function on $\mathcal{M}$. We consider the restriction of the X-ray transform to strictly geodesically convex subsets of $\mathcal{M}$. If we have $\mathcal{O}\subset  \mathcal{M}_c$ is an open set, we call the geodesic segments of the metric $g$ which are contained in $\mathcal{O}$ with endpoints on $\partial\mathcal{M}_c$, $\mathcal{O}$ local geodesics and we denote this collection as $\mathcal{M}_{\mathcal{O}}$ as in \cite{vasy}. The local geodesic transform of a function $f$ is defined on $\mathcal{M}_c$ as the collection of the integrals $f$ along the geodesics in $\mathcal{M}_{\mathcal{O}}$, that is the restriction of the X-ray transform to $\mathcal{M}_\mathcal{O}$.  

The main result of the paper \cite{vasy} can be formulated as below:
\begin{thm}\label{vasy}[Uhlmann and Vasy]
Let dim$\mathcal{M}=n\geq 3$. If we have that $\forall p\in \partial\mathcal{M}, \exists h(p)\in C^{\infty}(\mathcal{M})$ a function such that $h(p)=0$ and $dh=-d\rho$, and for $c$ sufficiently small with $\mathcal{O}_{p}=\{h(p)>-c\}\cap \overline{\mathcal{M}}_c$, the local geodesic transform is injective on $H^s(\mathcal{O}_p)$. Furthermore let $H^s(\mathcal{SM}|_{\mathcal{O}_p})$ be the restriction of elements in $H^s(\mathcal{SM})$ to $\mathcal{SM}|_{\mathcal{O}_p}$. For $F>0$, we define a weighted Sobolev class as follows:
\begin{align}
H^s_F(\mathcal{O}_p)=\exp(F/h+c)H^s=\{f\in H^s_{loc}(\mathcal{SM}|_{\mathcal{O}_p}): \exp(-F/(h+c)f\in H^s(\mathcal{O}_p)\}
\end{align}
then we have for any $s\geq 0$, there exists a constant $D$ such that $\forall f\in H^s_F(\mathcal{O}_p)$, 
\begin{align}
\norm{f}_{H^{s-1}_F(\mathcal{O}_p)}\leq D\norm{If|_{\mathcal{SM}|_{\mathcal{O}_p}}}_{H^s(\mathcal{O}_p)}.
\end{align}
\end{thm}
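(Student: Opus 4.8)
Because the statement is the local geodesic ray transform theorem of Uhlmann and Vasy, the plan is to reconstruct their microlocal argument; the exponential factor $\exp(F/(h+c))$ built into the weighted space $H^s_F(\mathcal{O}_p)$ is not an artifact but the essential device that renders the localized normal operator elliptic uniformly down to the artificial boundary $\{h+c=0\}$.

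First I would fix the geometry near a boundary point $p$. Writing $x := h+c$, the region $\mathcal{O}_p=\{x>0\}$ becomes a manifold with boundary at $x=0$, and the level sets $\{x=\mathrm{const}\}$ foliate a neighbourhood of $p$ by hypersurfaces that, because $\partial\mathcal{M}_c$ is strictly convex and $dh=-d\rho$, are themselves strictly convex from the side of increasing $x$. This convexity together with $\dim\mathcal{M}=n\ge 3$ is what guarantees that through each point $q\in\mathcal{O}_p$ there is a rich enough family of short geodesics nearly tangent to the leaf through $q$ that stay on the convex side; the covectors conormal to these geodesics at $q$ span, so in principle all of the wavefront of $f$ at $q$ is seen by $I$.

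Second, I would introduce the localized normal operator $N_\chi=I_\chi^\ast I_\chi$, where $I_\chi$ integrates only over the geodesics of the foliation cut off near the leaf, and form the conjugated operator $A_F=x^{-m}\,e^{-F/x}N_\chi\,e^{F/x}$ with $m$ a suitable positive constant chosen so that $A_F$ lands in the right order. The core of the proof is an oscillatory-integral, stationary-phase analysis of $N_\chi$ in boundary-adapted coordinates showing that $A_F$ belongs to Melrose's scattering pseudodifferential calculus on $\{x\ge 0\}$ and computing its principal symbol, including the symbol at the boundary (scattering) face. The decisive point is that the Gaussian-type weight suppresses the contributions of geodesics that graze the boundary, so that for $F$ chosen sufficiently large the scattering principal symbol is bounded below and $A_F$ is elliptic uniformly up to $x=0$.

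Third, from this ellipticity I would build a scattering parametrix $B_F$ with $B_FA_F=\mathrm{Id}+R$, $R$ a smoothing operator whose Schwartz kernel decays rapidly at the boundary, giving the a priori bound $\|f\|_{H^{s-1}_F(\mathcal{O}_p)}\le D\,\|N_\chi f\|+\|Rf\|$. Since $N_\chi=I_\chi^\ast I_\chi$, the first term is controlled by $\|If|_{\mathcal{SM}|_{\mathcal{O}_p}}\|_{H^s(\mathcal{O}_p)}$, and the compact remainder $Rf$ is absorbed using the injectivity hypothesis on the local transform to rule out a finite-dimensional kernel; the one-derivative shift from $H^s$ to $H^{s-1}$ simply records the order $-1$ of the normal operator and its degenerate behaviour at the boundary. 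The main obstacle is unmistakably the second step: establishing the uniform lower bound on the scattering symbol of $A_F$ as $x\to 0$, which demands a quantitative stationary-phase estimate and a careful tracking of how large $F$ must be taken and of the scattering-calculus mapping properties — this is the technical heart on which the whole estimate rests.
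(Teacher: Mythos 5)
The first thing you should know is that the paper does not prove this statement at all: Theorem \ref{vasy} is imported as a black box from Uhlmann and Vasy \cite{vasy}, and the surrounding text merely reformulates their result so it can be combined with Theorem \ref{main} in the corollaries. So the only meaningful comparison is with the original proof in \cite{vasy}. Measured against that, your outline correctly identifies the actual machinery: the artificial boundary $x=h+c$, the strictly convex foliation (with $\dim\mathcal{M}=n\geq 3$ needed so that the geodesics nearly tangent to a leaf supply enough conormal directions at each point), conjugation of a localized normal operator by $e^{F/x}$, and the proof that the conjugated operator lies in Melrose's scattering calculus and is elliptic up to $x=0$, from which the weighted estimate with its one-derivative loss follows by a parametrix argument. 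That is genuinely the Uhlmann--Vasy strategy.

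There are, however, two real deviations, one of which is a genuine gap. The serious one is your final step: you absorb the parametrix remainder $R$ ``using the injectivity hypothesis on the local transform.'' In the statement as given, injectivity of the local transform on $\mathcal{O}_p$ is a \emph{conclusion}, not a hypothesis, so this step is circular; and even granting injectivity, the functional-analytic absorption of a compact error yields the constant $D$ non-constructively and requires compactness of $R$ on the exponentially weighted space, which is delicate precisely because the weight degenerates at $x=0$. What Uhlmann and Vasy actually do is different and stronger: having the elliptic parametrix with error $R$, they shrink the lens-shaped region (take $c$ small) and show that the operator norm of the error on the weighted spaces tends to zero, so $\mathrm{Id}+R$ is invertible by a Neumann series; this produces injectivity and the stability estimate simultaneously, with no injectivity assumption anywhere. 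The second, minor, inaccuracy is your claim that $F$ must be taken ``sufficiently large'': for the scalar transform the scattering-face symbol is elliptic for every $F>0$ (largeness of the weight matters in the later tensor and boundary-rigidity extensions, not here). If you replace the Fredholm-plus-injectivity argument by the small-$c$ Neumann series argument, your sketch becomes a faithful reconstruction of the cited proof.
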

The authors \cite{vasy} consider domains $\mathcal{M}_c$ with equipped with a function $\rho: \overline{\mathcal{M}_c}\rightarrow[0,\infty)$ whose level sets $\Sigma_s=\rho^{-1}(s), s<S$ are strictly convex. 
Their theorem has the following global injectivity result as a corollary
\begin{cor}\label{ginj}[Uhlmann and Vasy]
For $\mathcal{M}_c$ and $\rho$ as above if the complement of $\bigcup\limits_{s\in[0,S)}\Sigma_s$  has empty interior  the global geodesic X-ray transform is injective on $H^s(\mathcal{M}), \forall s>n/2$. 
\end{cor}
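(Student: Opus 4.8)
The plan is to deduce the global statement from the local injectivity/stability result of Theorem \ref{vasy} by a layer-stripping (continuity) argument along the strictly convex foliation $\{\Sigma_s\}_{s\in[0,S)}$. Suppose $f\in H^s(\mathcal{M})$ with $s>n/2$ satisfies $If=0$; I want $f\equiv 0$. First I would record the two soft reductions. Since $s>n/2$, Sobolev embedding gives $f\in C(\mathcal{M})$, so pointwise vanishing is meaningful and vanishing on a dense subset forces $f\equiv 0$. Because the complement of $\bigcup\limits_{s\in[0,S)}\Sigma_s$ has empty interior, the union of level sets is dense in $\mathcal{M}$, so it suffices to prove $f$ vanishes on every $\Sigma_s$, $s\in[0,S)$.

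Next I would set up the continuity argument. Define
\[
E=\{\,s_0\in[0,S):\ f=0 \text{ on } \rho^{-1}([0,s_0])\,\}.
\]
The point $s_0=0$ is handled by Theorem \ref{vasy} applied at $\Sigma_0=\partial\mathcal{M}_c$: the local geodesic transform is injective near the boundary and $If=0$ restricts to zero on the local geodesics, so $f=0$ in a one-sided collar of $\Sigma_0$ and $E$ contains a small interval. Closedness of $E$ from below is immediate from continuity of $f$, since vanishing on $\rho^{-1}([0,s_0))$ passes to the closure. Thus $E$ is nonempty and closed in $[0,S)$, and the entire argument reduces to showing $E$ is open.

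The crux is the openness step, and this is where Theorem \ref{vasy} does the real work. Given $s_0\in E$, I would treat $\Sigma_{s_0}$ as an artificial boundary. For each $p\in\Sigma_{s_0}$ strict convexity supplies the function $h(p)$ with $dh=-d\rho$ and a small cap $\mathcal{O}_p=\{h(p)>-c\}\cap\overline{\mathcal{M}}_c$ on which the local geodesics concentrate. Since $f$ already vanishes on the inner region $\rho^{-1}([0,s_0])$ and $If=0$, the restricted transform $If|_{\mathcal{SM}|_{\mathcal{O}_p}}$ is zero, so the weighted stability estimate
\[
\norm{f}_{H^{s-1}_F(\mathcal{O}_p)}\leq D\,\norm{If|_{\mathcal{SM}|_{\mathcal{O}_p}}}_{H^s(\mathcal{O}_p)}=0
\]
forces $f=0$ on $\mathcal{O}_p$, \emph{provided} $f$ belongs to the weighted class $H^s_F(\mathcal{O}_p)$. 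Covering the compact set $\Sigma_{s_0}$ by finitely many such caps and using that strict convexity holds uniformly along the (compact) foliation — so that a single $c>0$ works — yields $f=0$ on $\rho^{-1}([0,s_0+\delta])$ for some $\delta>0$, giving $s_0+\delta\in E$. Hence $E$ is open, and by connectedness of $[0,S)$ we get $E=[0,S)$; combined with the density reduction this gives $f\equiv 0$.

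The hard part will be the openness step, specifically the passage from the pointwise local result to vanishing on a full inward layer. Two things must be controlled uniformly: the convexity constant $c$ and the cap size must be chosen independently of $p\in\Sigma_{s_0}$ (and locally in $s_0$) so that the finite cover argument closes, and one must verify the membership $f\in H^s_F(\mathcal{O}_p)$ so that Theorem \ref{vasy} is applicable. The exponential weight $\exp(-F/(h+c))$ degenerates precisely toward the part of $\mathcal{O}_p$ where $f$ has not yet been shown to vanish, so the caps and the constant $c$ must be ordered consistently with the direction of the foliation; this bookkeeping, together with the uniformity along $\Sigma_{s_0}$, is exactly the delicate part of the Uhlmann–Vasy foliation scheme that I would need to carry out in detail.
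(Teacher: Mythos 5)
First, a point of comparison: the paper does not actually prove this corollary — it is imported verbatim from Uhlmann and Vasy \cite{vasy}, as the attribution indicates, so there is no internal proof to measure your argument against. What you have written is, in architecture, precisely Uhlmann--Vasy's own layer-stripping proof: Sobolev embedding plus the empty-interior hypothesis to reduce injectivity to vanishing on the leaves of the foliation, then a connectedness argument in the foliation parameter whose openness step invokes the local theorem at an artificial convex boundary $\Sigma_{s_0}$. The two soft reductions, the nonemptiness of $E$ (via the local theorem at $\Sigma_0=\partial\mathcal{M}_c$), and the closedness of $E$ are all fine.

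As a standalone proof, however, the openness step — which you yourself flag as the crux — has two genuine gaps. First, Theorem \ref{vasy} as quoted applies at points $p\in\partial\mathcal{M}$, i.e.\ to caps at the actual boundary; using it at an interior leaf $\Sigma_{s_0}$ requires the local injectivity/stability result at any point of any strictly convex hypersurface (an ``artificial boundary''), which is true in \cite{vasy} but is not what the quoted statement says, so it must be invoked in that stronger form. Second, and more substantively, your claim that $If|_{\mathcal{SM}|_{\mathcal{O}_p}}=0$ is not automatic: the local data are integrals over geodesic \emph{segments} contained in the cap, whereas the hypothesis $If=0$ concerns complete geodesics with endpoints on $\partial\mathcal{M}_c$. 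To transfer one to the other you must show that every such segment extends to a complete geodesic that exits through $\partial\mathcal{M}_c$ while remaining, outside the cap, inside the already-stripped region $\rho^{-1}([0,s_0])$ (which, incidentally, is the \emph{outer} collar near $\partial\mathcal{M}_c$, not the ``inner region'' as you call it, since $\rho$ is a boundary defining function). This is exactly where strict convexity of the whole foliation enters quantitatively: along any geodesic, $\rho$ has only strict local maxima and no interior minima, so once the geodesic leaves the cap it decreases in $\rho$, and a uniform convexity bound forces it to reach $\rho=0$ in finite time (no trapping of nearly tangent geodesics). Without this step the stability inequality is being applied to data that has not been shown to vanish. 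By contrast, the membership $f\in H^s_F(\mathcal{O}_p)$ that worries you is the easy part: the weight $\exp(-F/(h+c))$ is bounded with all derivatives bounded on $\mathcal{O}_p$ (it extends smoothly by zero across $\{h=-c\}$), so $H^s(\mathcal{O}_p)\subset H^s_F(\mathcal{O}_p)$, and strict positivity of the weight on the open cap is all that is needed to convert the vanishing weighted norm into $f=0$ there. With these two points filled in, together with the compactness argument giving a uniform $c$ along $\Sigma_{s_0}$, your proof coincides with the one in \cite{vasy}.
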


As a consequence of combining the main theorems and the theorems in \cite{vasy} using compactness and Sobolev embedding, we have the following corollaries

\begin{cor}[Consequence of Theorem \ref{main}]
Let $E$ be subset of $\mathcal{SM}|_{\mathcal{O}_p}$, and $E_x$ be the associated set of $x$ such that $E_x\subset \mathcal{O}_p$. Let $I^E_{x,\omega}$ denote the geodesic ray transform restricted to $E$. In other words, the  geodesics which are being integrated over start and end on $E$. Let $q(x)=q_1(x)-q_2(x)\in H^{s-1}_F(\mathcal{O}_p)$ then we have for any $s\geq 0$, there exists a constant $D$  depending only on the metric $g$, the $C^1(\mathcal{M})$ norm of the potential $q(x)$, $T$, and diam$_g(E_x)$ a number $\beta\in(0,1)$ such that 
\begin{align}
\norm{q}_{H^{s-1}_F(\mathcal{O}_p)}\leq D\norm{\Lambda_{g,q_1}-\Lambda_{g,q_2}}_{H^1_0([0,T]\times E_x)}^{\beta}.
\end{align}
\end{cor}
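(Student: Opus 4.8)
The plan is to chain the pointwise stability estimate of Theorem~\ref{main} with the stability of the local geodesic transform of Uhlmann and Vasy (Theorem~\ref{vasy}), the bridge between them being a compactness-plus-interpolation argument. Since here $q=q_1-q_2$ is time independent, the X-ray transform $I_{x,\omega}q=\int_0^{\tau(x,\omega)}q(\gamma_{x,\omega}(s))\,ds$ reduces to the ordinary spatial geodesic transform, so that the hypotheses of Theorem~\ref{vasy} apply to $Iq$ restricted to $\mathcal{SM}|_{\mathcal{O}_p}$. First I would record that the set $E\subset\mathcal{SM}|_{\mathcal{O}_p}$ is relatively compact; since the threshold $\epsilon_0$ and the constant $C$ of Theorem~\ref{main} are governed only by $g$, $T$, $\mathrm{diam}_g(E_x)$ and $\|q\|_{C^1}$ — and not by the individual $(x,\omega)$ — they may be taken uniform over $E$. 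Moreover, because each geodesic in $E$ meets the boundary only in $E_x$, the Gaussian beam construction underlying Theorem~\ref{main} uses only the restriction of the maps to $[0,T]\times E_x$, which is exactly the partial-data norm on the right-hand side.

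This gives the uniform pointwise bound $\sup_{(x,\omega)\in E}|I^E_{x,\omega}q|\le C\|\Lambda_{g,q_1}-\Lambda_{g,q_2}\|_{H^1_0([0,T]\times E_x)}^{\beta}$, whence, $E$ having finite measure,
\begin{align*}
\|I^E q\|_{L^2(\mathcal{O}_p)}\le C'\,\|\Lambda_{g,q_1}-\Lambda_{g,q_2}\|_{H^1_0([0,T]\times E_x)}^{\beta}.
\end{align*}
The estimate of Theorem~\ref{vasy}, however, controls $\|q\|_{H^{s-1}_F(\mathcal{O}_p)}$ by the \emph{unweighted} $H^s(\mathcal{O}_p)$ norm of $Iq$, so the $L^2$ bound must be promoted to an $H^s$ bound. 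Here I would invoke the boundedness of the geodesic transform $I:H^{s'}(\mathcal{M}_c)\to H^{s'}(\partial_+\mathcal{SM})$ for integer $s'>s$ (Theorem~4.2.1 of \cite{shara}): since $q_1,q_2\in C^\infty$, the norm $\|I^E q\|_{H^{s'}}$ is finite and bounded by a fixed higher Sobolev norm of $q$. The Sobolev interpolation inequality between $L^2$ and $H^{s'}$ then yields
\begin{align*}
\|I^E q\|_{H^s(\mathcal{O}_p)}\le C\,\|I^E q\|_{L^2(\mathcal{O}_p)}^{1-s/s'}\,\|I^E q\|_{H^{s'}(\mathcal{O}_p)}^{s/s'}\le C''\,\|\Lambda_{g,q_1}-\Lambda_{g,q_2}\|_{H^1_0([0,T]\times E_x)}^{\beta(1-s/s')},
\end{align*}
and relabelling $\beta(1-s/s')$ as a new H\"older exponent in $(0,1)$ completes this step. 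Feeding this into Theorem~\ref{vasy} gives $\|q\|_{H^{s-1}_F(\mathcal{O}_p)}\le D\|I^E q\|_{H^s(\mathcal{O}_p)}\le D''\|\Lambda_{g,q_1}-\Lambda_{g,q_2}\|_{H^1_0([0,T]\times E_x)}^{\beta}$, which is the claim.

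I expect the main obstacle to be the bookkeeping that makes both theorems applicable to the \emph{same} object on $\mathcal{O}_p$: one must check that the strictly geodesically convex subset $\mathcal{M}_c$ (and $\mathcal{O}_p$) inherits the admissibility criterion of Section~2 so that the beams of Theorem~\ref{main} exist, is convex non-trapping so that the mapping property of \cite{shara} applies, and simultaneously satisfies the strict-convexity/foliation hypotheses of Theorem~\ref{vasy}; and that the partial set $E_x$ is large enough to see every local geodesic in $\mathcal{O}_p$. A secondary point, to be flagged honestly, is that the interpolation forces a dependence of the constant on a higher Sobolev norm of $q$ rather than on $\|q\|_{C^1}$ alone; this is harmless since $q\in C^\infty$ and on a compact manifold that norm is itself controlled by a $C^k$ norm, but the precise norm dependence in the corollary should be read in this light. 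The weight $\exp(F/(h+c))$ causes no trouble on the transform side, since in Theorem~\ref{vasy} it appears only on the $q$-side of the inequality.
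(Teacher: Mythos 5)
Your proposal is correct and is essentially the paper's own argument: the paper's entire proof is the one-line remark that, taking $E_x$ to be a compact embedded submanifold of $\mathcal{M}$, the corollary ``follows immediately'' from Theorem \ref{main} combined with Uhlmann--Vasy (Theorem \ref{vasy}), with ``compactness and Sobolev embedding'' invoked only in the preamble to the corollaries. Your uniformization over $E$, the passage from the pointwise bound through $L^2$ to an $H^s$ bound by interpolating against the a priori mapping property of the ray transform (Theorem 4.2.1 of \cite{shara}), and the caveats you flag (compatibility of the convexity/non-trapping hypotheses, $E_x$ seeing all local geodesics, and the constant thereby depending on a higher Sobolev norm of $q$ rather than only its $C^1(\mathcal{M})$ norm) are precisely the details the paper leaves implicit.
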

\begin{proof}
If we consider $E_x$ to be a compact embedded submanifold of $\mathcal{M}$ then the result follows immediately from Theorem \ref{main} and Uhlmann and Vasy's result (Theorem \ref{vasy}). 
\end{proof}

\begin{cor}[Consequence of Theorem \ref{main}]
Assuming the potentials are time independent, if 
\begin{align}
\Lambda_{g,q_1}=\Lambda_{g,q_2}
\end{align}
then $q_1(x)-q_2(x)=0$ for all manifolds $\mathcal{M}=\mathcal{M}_c$ satisfying the assumptions in Corollary \ref{ginj} with the potentials must lie in the appropriate $C^s(\mathcal{M})$ as dictated by the dimension of the manifold.  
\end{cor}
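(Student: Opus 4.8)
The plan is to read the vanishing of the geodesic X-ray transform of $q = q_1 - q_2$ directly off Theorem \ref{main}, and then invoke the global injectivity statement in Corollary \ref{ginj} to conclude $q=0$. So the argument is a concatenation of the two cited results, and the work lies almost entirely in checking that their hypotheses are compatible in the degenerate case of equal data.

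First I would observe that the hypothesis $\Lambda_{g,q_1} = \Lambda_{g,q_2}$ forces $\norm{\Lambda_{g,q_1} - \Lambda_{g,q_2}}_{H_0^1 \rightarrow L^2} = 0$. The only point requiring care is to check that the quantitative side conditions of Theorem \ref{main} can actually be arranged. Given $\alpha > 1$ and $n = \dim(\mathcal{M})$, the condition $T - \mathrm{diam}_g(\mathcal{M}) \geq 4\epsilon_0^{1/(2\alpha n)}$ is satisfiable for all sufficiently small $\epsilon_0 > 0$, because the admissibility criterion guarantees $T > \mathrm{diam}_g(\mathcal{M})$ (every maximal geodesic exits by time at most $T$) while $\epsilon_0^{1/(2\alpha n)} \to 0$ as $\epsilon_0 \to 0^+$. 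Fixing any such $\epsilon_0$, the vanishing norm trivially satisfies $0 < \epsilon_0$, so Theorem \ref{main} applies for every $(x,\omega) \in \partial_+\mathcal{SM}$.

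Next I would invoke the conclusion of Theorem \ref{main}: for each $(x,\omega) \in \partial_+\mathcal{SM}$,
\begin{align*}
|I_{x,\omega}(q)| \leq C \cdot 0^{\beta} = 0,
\end{align*}
so that $I_{x,\omega}(q) = 0$ identically on $\partial_+\mathcal{SM}$. Because the potentials are time independent, $q(t,x) = q(x)$ and the transform (\ref{Xray}) collapses to the ordinary geodesic X-ray transform $Iq(x,\omega) = \int_0^{\tau(x,\omega)} q(\gamma_{x,\omega}(s))\,ds$. Thus the full geodesic X-ray transform of the time-independent function $q$ vanishes.

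Finally I would apply Corollary \ref{ginj}: for $\mathcal{M} = \mathcal{M}_c$ and $\rho$ as in that statement, the global geodesic X-ray transform is injective on $H^s(\mathcal{M})$ for every $s > n/2$. Since $q$ is assumed to lie in the appropriate class $C^s(\mathcal{M})$ dictated by the dimension, compactness of $\mathcal{M}$ and the embedding $C^s(\mathcal{M}) \hookrightarrow H^s(\mathcal{M})$ place $q$ in the domain of injectivity; hence $Iq = 0$ forces $q = 0$, i.e. $q_1 - q_2 = 0$. I expect the only genuine obstacle to be the bookkeeping in the second and fourth paragraphs, namely verifying that the strict inequality $T > \mathrm{diam}_g(\mathcal{M})$ is available and that the assumed Hölder regularity on $q$ exceeds the Sobolev threshold $s > n/2$ demanded by Corollary \ref{ginj}; once these are confirmed the corollary follows immediately from the two theorems.
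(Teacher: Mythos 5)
Your proposal is correct and takes essentially the same route as the paper: the paper's (essentially one-sentence) justification is exactly the concatenation you describe, namely applying Theorem \ref{main} with vanishing norm difference to conclude $I_{x,\omega}(q)=0$ on all of $\partial_+\mathcal{SM}$, noting that for time-independent $q$ this is the ordinary geodesic X-ray transform, and then invoking the Uhlmann--Vasy global injectivity of Corollary \ref{ginj} together with compactness and the Sobolev embedding $C^s(\mathcal{M})\hookrightarrow H^s(\mathcal{M})$, $s>n/2$, to conclude $q=0$. Your additional bookkeeping on the admissibility of $\epsilon_0$ and the condition $T-\mathrm{diam}_g(\mathcal{M})\geq 4\epsilon_0^{1/(2\alpha n)}$ only makes explicit what the paper leaves implicit.
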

\textbf{Remark}: This corollary was also observed in \cite{msd}. 

\section{Gaussian Beams}
The work here differs from the previous studies in \cite{sjo}, \cite{DDSF}, \cite{stefanovstable}, because we are not using the geometric optics Ansatz. Instead we will take our cue from the article by Ralston \cite{Ralston} and the book \cite{LKK}, and use a complex phase Ansatz called the Gaussian beam Ansatz. 
We start with the construction of the Gaussian beams, and show that we can solve the initial boundary value problem to a high degree of accuracy on the manifold $\mathcal{M}$, in a neighborhood of the curve $x(t)\in \mathcal{M}$, depending continuously on time, $t$. This curve $x(t)$ is actually a geodesic $\gamma(t)$ (see the Appendix for a proof in Fermi coordinates). We have the following theorem which begins to make the construction more precise. For the rest of this paper, we take $\lambda$ to be a scalar.

\begin{thm}\label{gaussians}
Let $d_g(\cdot,\cdot)$ denote the distance function associated to the Riemannian metric, $g$. Let $\lambda$ be a scalar which is our asymptotic parameter. Whenever $q(t,x)\in C^N([0,T]\times \mathcal{M})$ then for any finite $N$, we can construct nonzero functions $a_j(t,x)\in H^1([0,T],L^2(\mathcal{M}))$, $j=0,. . . ,N$ and $\psi(t,x)\in C^N([0,T]\times \mathcal{M})$ independent of $\lambda$, such that if we let 
\begin{align}\label{gb}
U^N_{\lambda}(t,x)=\sparen{\frac{\lambda}{\pi}}^{\frac{n}{4}}\exp(i\lambda\psi(t,x))\sum\limits_{j=0}^N\sparen{\frac{i}{\lambda}}^{j}a_j(t,x)
\end{align}
then 
\begin{align}
\sup\limits_{x\in\mathcal{M}, t\in [0,T]}|(\Box_g+q(t,x))U_{\lambda}^N(t,x)|\leq C\lambda^{-N+\frac{n}{4}}.
\end{align}
The coefficients, $a_j(t,x)$, are the amplitudes and $\psi(t,x)$ is a complex-valued phase function. We refer to $U_{\lambda}^N(t,x)$ as our formal gaussian beam of order $N$. We will see that it is essential in our construction $\psi(t,x)$ has positive definite imaginary part, by which we mean for any compact subset of $[0,T]\times \mathcal{M}$:
\begin{align}\label{posdef}
&\Im\psi(t,x(t))=0 \\&
\Im\psi(t,x)\geq C(t)d^2_g(x,x(t)) \nonumber
\end{align}
where again, $x(t)$ is a curve in $\mathcal{M}$ which depends continuously on $t$, and $C(t)$ is a continuous positive function $\forall t\in [0,T]$. The constants are uniform for any compact subset of $[0,T]\times\mathcal{M}$.  
\end{thm}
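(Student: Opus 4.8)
The plan is to construct the Gaussian beam by the standard WKB/quasimode procedure: substitute the Ansatz \eqref{gb} into the operator $\Box_g + q$, expand in powers of $\lambda$, and solve the resulting transport equations recursively. First I would compute $(\Box_g + q)U_\lambda^N$ and collect terms by their power of $\lambda$. The highest-order term, of order $\lambda^{2}$, forces the \emph{eikonal equation} $g^{ij}\partial_i\psi\,\partial_j\psi - (\partial_t\psi)^2 = 0$ (i.e. $|\nabla_g\psi|_g^2 = (\partial_t\psi)^2$), which is the Hamilton--Jacobi equation whose null-bicharacteristics are exactly the geodesics lifted to spacetime. Rather than solve the eikonal equation globally, I would work in Fermi coordinates along the central geodesic $\gamma(t)=x(t)$ (as the excerpt signals the Appendix does) and seek $\psi$ as a Taylor expansion to second order in the transverse variables: $\psi(t,x) = \phi(t) + \langle p(t), y\rangle + \tfrac12\langle H(t)y, y\rangle + \dots$, where $y$ measures transverse distance from $\gamma$. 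The crucial point is that the quadratic coefficient $H(t)$ must be chosen with \emph{positive definite imaginary part} so that \eqref{posdef} holds; imposing the eikonal equation to second order along $\gamma$ yields a matrix \emph{Riccati equation} for $H(t)$, and the classical argument (going back to Ralston \cite{ralston77}, \cite{Ralston}) shows that a symmetric complex solution with $\Im H(t)>0$ exists for all $t\in[0,T]$ provided it is positive definite initially, because the solution can be written as $H = \dot{Z}Z^{-1}$ via the Jacobi equation where $Z(t)$ never degenerates. This is the step I expect to be the main obstacle: verifying that $\Im H(t)$ stays positive definite, hence that $\Im\psi \geq C(t)d_g^2(x,x(t))$, uniformly on compact time intervals, under only the weak non-trapping/transversality hypothesis on the geometry and with the metric merely $C^3$.

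Once $\psi$ is fixed, the next power of $\lambda$ (order $\lambda^{1}$) produces the first \emph{transport equation}, a first-order linear ODE along $\gamma$ for the leading amplitude $a_0$, of the form $2\langle d\psi, da_0\rangle + (\Box_g\psi)\,a_0 = 0$ restricted to the geodesic (with the appropriate spacetime inner product). I would solve this along $\gamma$ with a nonzero initial value, which guarantees $a_0$ is nonvanishing; the normalization factor $(\lambda/\pi)^{n/4}$ is the standard choice making the $L^2$ mass of the beam order one. The lower-order amplitudes $a_1,\dots,a_N$ satisfy inhomogeneous transport equations of the same type, with right-hand side built from $\Box_g a_{j-1}$ and the potential $q\,a_{j-1}$; solving them successively (which is where the $C^N$ regularity of $q$ enters, ensuring each $a_j\in H^1([0,T],L^2(\mathcal{M}))$) cancels all terms through order $\lambda^{-N+1}$.

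The remaining error is the single uncancelled term $(\lambda/\pi)^{n/4}(i/\lambda)^N e^{i\lambda\psi}\,\Box_g a_N$, which carries an overall factor $\lambda^{n/4 - N}$; together with the Gaussian localization this gives the stated bound $\sup|(\Box_g+q)U_\lambda^N|\le C\lambda^{-N+n/4}$. To make this rigorous away from $\gamma$, I would multiply the formal beam by a smooth cutoff supported in a fixed tubular neighborhood of $\gamma$ where Fermi coordinates are valid; the positivity \eqref{posdef} ensures that $|e^{i\lambda\psi}|=e^{-\lambda\Im\psi}$ decays like a Gaussian of width $\lambda^{-1/2}$, so the commutator terms generated by differentiating the cutoff are exponentially small in $\lambda$ and absorbed into $C$. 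The two places where care is genuinely needed, and which I would foreground, are (i) the uniform-in-$t$ positivity of $\Im H(t)$ on $[0,T]$ from the Riccati/Jacobi analysis, and (ii) tracking that only three derivatives of the metric are ever used, so that the construction remains valid under the minimal $C^3$ regularity hypothesis emphasized in the introduction.
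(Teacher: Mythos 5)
Your proposal follows essentially the same route as the paper's proof: substitute the Ansatz, derive the eikonal equation, solve it to second order along the central geodesic with a complex Hessian governed by a matrix Riccati equation, establish persistence of the positive-definite imaginary part by linearizing the Riccati equation (your $H=\dot{Z}Z^{-1}$ via the Jacobi system is exactly the paper's $M(t)=N(t)Y(t)^{-1}$ with the linear system for $(Y,N)$ and initial data $(I,iI)$), and then solve the recursive transport equations for the amplitudes, with Gaussian decay absorbing the errors away from the geodesic. The only cosmetic difference is that you phrase the construction in Fermi coordinates, which the paper defers to its Appendix, while the main-text proof works with the Hamiltonian $h(x,p)=\sqrt{g^{kl}(x)p_kp_l}$ in general local coordinates and Taylor expansions about $x(t)$.
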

\begin{proof}
This construction is done in numerous places, for example see \cite{LKK} and \cite{Ralston}. We sketch it quickly because we will need the form of the phase for later computations. In order to find the phase and amplitudes, we substitute the asymptotic expansion of the Gaussian beam (\ref{gb}) into the wave equation to obtain
\begin{align}\label{sub}
(\Box_g+q(t,x))U_{\lambda}^N(t,x)=\sparen{\frac{\lambda}{\pi}}^{\frac{n}{4}}\exp\sparen{i\lambda\psi(x,t)}\sum\limits_{j=0}^{N+2}\sparen{\frac{i}{\lambda}}^{j-2}c_j(t,x).
\end{align}
Examining highest order terms in $\lambda$ first, we see that the phase function $\psi(t,x)$ must satisfy the eikonal equation
\begin{align}\label{eikonal}
(\psi_t)^2-g^{kl}(x)\psi_{x_k}\psi_{x_l}=0.
\end{align}
For the rest of this paper, we set 
\begin{align*}
h^2(x,\psi_x)=g^{kl}(x)\psi_{x_k}\psi_{x_l}.
\end{align*}
The solutions of the equation 
\begin{align*}
\psi_t\pm h(x,\psi_x)=0
\end{align*}
to high order along a single curve, $(t,x(t))$ in space time are central to the construction of the Gaussian beam. We want to find a phase which satisfies (\ref{eikonal}) to high order. Since the two cases are essentially the same, we consider trying to solve

\begin{align}\label{hflow}
\psi_t = h(x,\psi_x).
\end{align}
Let $h(x,p)$ be defined as 
\begin{align}
h(x,p)=\sqrt{g^{kl}(x)p_kp_l}
\end{align}
We claim $(x(t),\omega(t))$ with $(x(0),\omega(0))\in \partial\mathcal{SM}^+$
\begin{align}\label{systemomega}
\frac{dx(t)}{dt}=-h_{p}(x(t),\omega(t)) \quad \frac{d\omega(t)}{dt}=h_x(x(t),\omega(t))
\end{align}
is a curve which will allow such a construction. By the fundamental theorem of ordinary differential equations, this system is well posed for $t\in [0,T]$ with $T<\infty$. We chose the phase function to be real valued along the curve $x=x(t)$, and we fix the initial value of the phase function as
\begin{align}\label{icephase}
\psi(0,x)=i|x-x_0|^2/2+(x-x_0)\cdot \omega_0.
\end{align}
We will show we can write the phase function in a Taylor series with the first two terms given by:
\begin{align*}
\psi^1(t,x)=(x-x(t))\cdot\omega(t) \qquad \psi^2(t,x)=M_{lj}(t)(x-x(t))^{l}(x-x(t))^{j} 
\end{align*}
where $M(t)$ is a matrix such that $\Im M(t)$ is positive definite. 

Working backwards, if we differentiate the equation (\ref{hflow}) we obtain the following relations
\begin{align}\label{systempsi}
&\psi_{tx_j}-h_{p_i}(x,\psi_x)\psi_{x_ix_j}=h_{x_j}(x,\psi_x)\\ \nonumber
&\psi_{tt}-h_{p_i}(x,\psi_x)\psi_{x_it}=0\\ \nonumber
&\psi_{tx_jx_k}-h_{p_i}(x,\psi_x)\psi_{x_ix_jx_k}=\\ \nonumber
&h_{x_jx_k}(x,\psi_x)+h_{x_jp_i}(x,\psi_x)\psi_{x_ix_k}+h_{x_lp_i}(x,\psi_x)\psi_{x_ix_j}+h_{p_ip_m}(x,\psi_x)\psi_{x_ix_j}\psi_{x_mx_k}.
\end{align}
To simplify this set of relations we consider the matrices $A$, $B$ and $C$ which are defined with entries as follows:
\begin{align}\label{matrices}
& A_{j}^i=\{h_{x_ix_j}(x(t),\omega(t))\} \\ \nonumber
& B_j^i=\{h_{x_ip_j}(x(t),\omega(t))\}  \\ \nonumber
& C_j^i=\{h_{p_ip_j}(x(t),\omega(t))\}  \nonumber
\end{align}
If we set $\nabla_x\psi(t,x(t))=\omega(t)$ then the equations we know that the phase must satisfy (\ref{systempsi}) along the path $\{(t,x(t)): 0\leq t\leq T\}$ become
\begin{align}\label{bich}
&\frac{dx(t)}{dt}=-h_{p}(x(t),\omega(t)) \qquad \frac{d\omega(t)}{dt}=h_x(x(t),\omega(t)) \qquad \frac{d\psi}{dt}(t,x(t))=0 
\\ \label{ricatti}
& \frac{dM}{dt}=A+BM+MB^t+MCM 
\end{align}
The last equation (\ref{ricatti}) is a matrix Riccati equation associated to (\ref{bich}). It is a non-linear equation which is not always well-posed. From the equation $\psi_t(t,x(t))=0$ in (\ref{bich}), and the initial condition, this implies $\psi(t,x(t))=x(t)\cdot \omega(t)$ and $\psi_1(t)=\omega(t)$ as claimed. The crucial choice is therefore the Hessian, $M(t)$ which is associated to the second order terms in $(x-x(t))$. We chose the initial condition $M(0)=iI$. We also associate the matrices $Y(t)$ and $N(t)$ to the Hessian $M(t)$. Now we let $Y(t)$ and $N(t)$ satisfy the following system: 
\begin{align}\label{systemY}
&\frac{dY}{dt}=-B^tY-CN \qquad \frac{dN}{dt}=AY+BN \\& 
(Y(0),N(0))=(I,iI) \nonumber
\end{align}
We claim whenever $(Y(t),N(t))$ is a solution to (\ref{systemY}), then $Y(t)$ is invertible, and the solution $M(t)=N(t)Y^{-1}(t)$ to (\ref{ricatti}) exists for all bounded time intervals, if and only if $M(t)$ is positive definite. With the given initial conditions (\ref{icephase}), this is equivalent to the claim we can find a phase satisfying the condition (\ref{posdef}).  

We proceed to prove the claim by assuming that $Y(t)v=0$. If this is true then, 
\begin{align*}
0=\langle(Y(t)v,N(t)v),(Y(t)v,N(t)v\rangle_{\mathbb{C}}=\langle(v,iv),(v,iv)\rangle_{\mathbb{C}}=-2i|v|^2
\end{align*}
which implies $v=0$. Therefore $M(t)=N(t)Y(t)^{-1}$ is well defined-for all $t$ in a bounded interval. If we let $v_0=Y(t)^{-1}v$ and $w_0=Y(t)^{-1}w$, we have 
\begin{align*}
&w\cdot M(t)v-v\cdot M(t)w=Y(t)w_0\cdot N(t)v_0-Y(t)v_0\cdot N(t)w_0=\\&\nonumber
=\langle(Y(t)w_0,N(t)w_0),(Y(t)v_0,N(t)v_0)\rangle_{\mathbb{C}}=\langle(w_0,iw_0),(v_0,iv_0)\rangle_{\mathbb{C}}=0
\end{align*}
from which it follows that $M(t)=M(t)^t$. Similarly, we can see  
\begin{align*}
&v\cdot\overline{M(t)v}-\overline{v}\cdot M(t)v=\langle(Y(t)v_0,N(t)v_0),(Y(t)v_0,N(t)v_0)\rangle_{\mathbb{C}}=\\& \nonumber
\langle(v_0,iv_0),(v_0,iv_0)\rangle_{\mathbb{C}}=-2i|v_0|^2
\end{align*}
which proves that $\Im{M}(t)$ is positive definite so our claim (\ref{posdef}) is proved. 

Now we proceed to find the coefficients $a_j(t,x)$ of the beam.  The substitution (\ref{sub}) gives $c_j(t,x)$ is of the form:
\begin{align}\label{recursion}
&c_j(t,x)=\\& \nonumber \sparen{(\psi_t(t,x))^2-g^{kl}(x)\psi_{x_k}(t,x)\psi_{x_l}(t,x)}a_j(t,x)-La_{j-1}(t,x)+\sparen{\Box_g+q(t,x)}a_{j-2}(t,x)
\end{align}
where $j=0, . . ,N+2$ and initially $a_{-1}\equiv a_{-2}\equiv 0$. The linear operator $L$ is the transport operator which acts on functions $a(t,x)$ in the following manner
\begin{align*}
La=2\psi_t a_t-2g^{kl}\psi_{x_k}a_{x_l}+(\Box_g\psi)a
\end{align*}
Now, we see that in order for the the Ansatz to satisfy the PDE to high order, each $c_j$ for $j=0, . . . ,N+1$ must vanish to order $2(N+2-j)$ along the nul-bicharacteristic curves, which correspond to $x=x(t)$. Therefore, it is natural to consider $a_j(t,x)$ as a sum of homogeneous polynomial with respect to $x-x(t)$ as well, so we Taylor expand
\begin{align}\label{taylor}
a_j(t,x)=\sum\limits_{l\geq 0}a_{j,l}(t)(x-x(t))^l.
\end{align} 
From this identity we can match up term in our Taylor series expansion. Combining (\ref{bich}) and (\ref{recursion}), we obtain a differential equation for $a_{j,l}(t)$;
\begin{align}\label{diffeq}
\frac{d}{dt}a_{j,l}(t)+r(t)a_{j,l}(t)=F_{j,l}(t) 
\end{align} 
The right hand side is a homogenous polynomial of order $l$ in $x-x(t)$ which depends on $a_{l,k}(t)$ and $\psi_{k}$ where $k\leq l+2, r<j$ \cite{LKK}. The factor $r(t)$ comes from computing $\Box_g\psi$ along the curves (\ref{bich}). One sees that
\begin{align*}
\Box_g\psi=(\psi_t)_t-(\psi_t h_{p_i})_{x_i}-hh_{p_i}\frac{g_{x_i}}{2g}=&\\ h\sparen{\frac{g_t}{2g}-tr(B+CM)}=h\sparen{\frac{g_t}{2g}}+\mathrm{tr}\sparen{\frac{dY}{dt}Y^{-1}}.
\end{align*}
Using the identity 
\begin{align*}
\frac{d}{dt}\ln(\det R(t))=tr\sparen{\frac{dR(t)}{dt}R(t)^{-1}}
\end{align*}
we  obtain ordinary differential equations defining $a_{j,l}(t)$ as follows
\begin{align*}
\frac{d}{dt}a_{j,l}(t)+\sparen{\frac{1}{4}\frac{d}{dt}\ln[(\det(Y(t))^2g(t)]}a_{j,l}(t)=F_{j,l}(t)
\end{align*}
Solutions to these equations are given by 
\begin{align}\label{coeffeq}
a_{j,l}(t)=\sigma(t)\sparen{a_{j,l}(0)+\int\limits_0^t\sigma^{-1}(s)F_{j,l}(s)\,ds}
\end{align}
where 
\begin{align*}
\sigma(t)=\sparen{\frac{\det Y(0)}{\det Y(t)}}^{\frac{1}{2}}\sparen{\frac{g(x(0))}{g(x(t))}}^{\frac{1}{4}}.
\end{align*}
The theorem has the following simple corollaries which we will use later
\begin{cor}\label{size}
In local coordinates, we can write
\begin{align*}
a_{0}(t,x)=\sparen{\frac{\det Y(0)}{\det Y(t)}}^{\frac{1}{2}}\sparen{\frac{g(0)}{g(x(t))}}^{\frac{1}{4}}a(0,x)+\mathcal{O}(|x-x(t)|)
\end{align*}
\end{cor}
We can easily compute the first few terms given by (\ref{coeffeq}). Since we know that $F_{0,0}=0$, we compute
\begin{align}\label{a0}
a_{0,0}(t)=a_{0,0}(0)\sigma(t).
\end{align}
\end{proof}
The second corollary is:
\begin{cor}\label{expsize}
Let $\psi(t,x)$ correspond to a zeroth order beam. Let $\sim$ denote the equivalence relation bounded above and below by, then we have
\begin{align*}
\exp(-2\lambda\Im\psi(t,x))\sim\exp(-\lambda Cd^2_g(x,x(t))).
\end{align*}
where $C$ is independent of $\lambda$.  As a consequence if we let $A$ denote the set 
\begin{align*}
A=\{x: d_g(x,x(t))>\lambda^{-(\frac{1}{2}-\sigma)},\,\,\, 0\leq t\leq T\}  \qquad \sigma>0, \sigma \in\mathbb{R}
\end{align*}
then since $2\Im\psi(t,x)\sim d^2_g(x,x(t))$, $\exp(-2\lambda\Im\psi(t,x))$ is exponentially decreasing in $\lambda$ for all $x\in A$. 
\end{cor}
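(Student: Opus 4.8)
The plan is to reduce the asserted exponential equivalence to a purely quadratic, two-sided comparison for the imaginary part of the phase, namely $2C_1\, d_g^2(x,x(t)) \le 2\Im\psi(t,x) \le 2C_2\, d_g^2(x,x(t))$ with constants $0<C_1\le C_2$ independent of $\lambda$, and then simply exponentiate. The lower bound is handed to us directly by Theorem \ref{gaussians}: property (\ref{posdef}) gives $\Im\psi(t,x)\ge C(t)\,d_g^2(x,x(t))$ on any compact subset of $[0,T]\times\mathcal{M}$ with $C(t)$ continuous and strictly positive, and since $\mathcal{M}$ and $[0,T]$ are compact this holds globally. Setting $C_1=\min_{t\in[0,T]}C(t)>0$ makes the lower bound uniform in $t$.

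For the reverse (upper) bound I would argue that $x\mapsto\Im\psi(t,x)$ vanishes to exactly second order along the curve. Indeed, by (\ref{posdef}) we have $\Im\psi(t,x(t))=0$, and since the first-order Taylor coefficient of the phase is $\psi^1(t,x)=(x-x(t))\cdot\omega(t)$ with $\omega(t)$ real (the bicharacteristic system (\ref{systemomega}) has real data and real coefficients), the gradient $\nabla_x\Im\psi(t,x(t))$ vanishes as well. As $\psi\in C^N$ with $N\ge 2$, Taylor's theorem gives $\Im\psi(t,x)=\langle \Im M(t)(x-x(t)),(x-x(t))\rangle + O(|x-x(t)|^3)$, and on a fixed compact tubular neighborhood of $\{(t,x(t))\}$ the continuity and positive-definiteness of $\Im M(t)$ (established in Theorem \ref{gaussians}) yield $\Im\psi(t,x)\le C'(t)\,d_g^2(x,x(t))$; taking $C_2=\max_{t\in[0,T]}C'(t)<\infty$ makes this uniform. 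Exponentiating and negating the resulting chain $2C_1 d_g^2\le 2\Im\psi\le 2C_2 d_g^2$ gives $\exp(-\lambda\,2C_2 d_g^2)\le \exp(-2\lambda\Im\psi)\le \exp(-\lambda\,2C_1 d_g^2)$, which is precisely the equivalence $\exp(-2\lambda\Im\psi)\sim\exp(-\lambda C d_g^2)$ in the stated sense (the constant $C$ being allowed to differ between the upper and lower bound).

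For the consequence on the set $A$ I would use only the lower bound, so this part is unconditional. If $x\in A$ then $d_g(x,x(t))>\lambda^{-(\frac{1}{2}-\sigma)}$, hence $d_g^2(x,x(t))>\lambda^{-(1-2\sigma)}=\lambda^{2\sigma-1}$, and therefore
\[
2\lambda\,\Im\psi(t,x)\ge 2\lambda C_1\, d_g^2(x,x(t)) > 2C_1\,\lambda^{2\sigma}.
\]
Thus $\exp(-2\lambda\Im\psi(t,x))<\exp(-2C_1\lambda^{2\sigma})$, which decays faster than any negative power of $\lambda$ as $\lambda\to\infty$ because $\sigma>0$; this is exactly the claimed exponential decay on $A$ (the tube complement being nonempty and exhausting the complement of the curve as $\lambda\to\infty$ precisely when $\sigma<\frac{1}{2}$).

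I expect the main obstacle to be the upper bound, i.e. making the comparison genuinely two-sided. The lower quadratic bound is inherited from (\ref{posdef}), but the upper bound requires controlling the cubic and higher Taylor remainder of $\Im\psi$ uniformly, which is transparent only on a compact neighborhood of the curve where the $C^N$ norm of $\psi$ is under control. Away from such a tube the formal phase need not admit a quadratic majorant, so one must either restrict the equivalence to a fixed tube---which suffices for the applications, since the beam $U^N_\lambda$ is effectively supported there---or observe that outside the tube $\Im\psi$ is bounded below by a positive constant, forcing the exponential to be negligible anyway. In every case the decay conclusion on $A$ is untouched, as it rests solely on the lower bound already proved in Theorem \ref{gaussians}.
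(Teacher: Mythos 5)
Your proposal is correct and follows essentially the same route as the paper: the paper's (very terse) proof just observes that $\Im M(t)$ is bounded and positive definite and invokes the form of the phase from Theorem \ref{gaussians}, which is exactly the two-sided quadratic comparison you spell out (positive definiteness for the lower bound via (\ref{posdef}), boundedness of $\Im M(t)$ plus the Taylor form with real linear term $\omega(t)$ for the upper bound, on a compact tube). Your explicit computation for the set $A$ and your remark that the upper bound only matters on a fixed tubular neighborhood are simply careful elaborations of what the paper leaves implicit.
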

\begin{proof}
We need only observe that $M(t)$ is a bounded and positive definite matrix. From the form of the phase functions in Theorem \ref{gaussians}, the desired result follows. 
\end{proof}

We need another Lemma which comes from \cite{steinreal}, cf. Lemma 4.4. in Chapter 2
\begin{lem}\label{goodkernel}
Let $K_{\delta}(x)$ be a postive function depending on $\delta$ a small parameter and $x$, satisfy the following conditions
\begin{enumerate}
\item $\int\limits_{\mathbb{R}^n}K_{\delta}(x)\,dx=1$
\item $\forall \eta>0$ we have that 
\begin{align*}
\int\limits_{|x|>\eta}|K_{\delta}(x)|\,dx\rightarrow 0
\end{align*}
as $\delta\rightarrow 0$. 
\end{enumerate}
then we have $K_{\delta}\ast f\rightarrow f$ as $\delta\rightarrow 0$ if $f(x)\in C_c(\mathbb{R}^n)$.  
\end{lem}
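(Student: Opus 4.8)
The plan is to establish uniform convergence $K_\delta * f \to f$ by the standard approximation-to-the-identity argument: rewrite the difference as a single kernel integral and split it into a region near the origin and its complement. First I would use the normalization condition (1) to write $f(x) = \int_{\mathbb{R}^n} K_\delta(y) f(x)\,dy$, so that
\begin{align*}
(K_\delta * f)(x) - f(x) = \int_{\mathbb{R}^n} K_\delta(y)\bigl[f(x-y) - f(x)\bigr]\,dy.
\end{align*}
This reduces matters to controlling the increment $f(x-y) - f(x)$ weighted against the kernel.

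Next I would invoke the hypothesis $f \in C_c(\mathbb{R}^n)$, so that $f$ is bounded and uniformly continuous. Fixing $\epsilon > 0$, uniform continuity lets me choose $\eta > 0$ with $|f(x-y) - f(x)| < \epsilon$ whenever $|y| \le \eta$, uniformly in $x$. Splitting the integral at $|y| = \eta$, the near piece is handled using the positivity of $K_\delta$ together with (1),
\begin{align*}
\Bigl| \int_{|y| \le \eta} K_\delta(y)\bigl[f(x-y) - f(x)\bigr]\,dy \Bigr| \le \epsilon \int_{|y| \le \eta} K_\delta(y)\,dy \le \epsilon,
\end{align*}
while the far piece is bounded crudely by $2\norm{f}_{\infty} \int_{|y| > \eta} |K_\delta(y)|\,dy$, which condition (2) drives to $0$ as $\delta \to 0$. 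Hence for all sufficiently small $\delta$, with a threshold depending on $\epsilon$ and $\eta$ but not on $x$, both pieces are $< \epsilon$, giving $\sup_x |(K_\delta * f)(x) - f(x)| < 2\epsilon$ and therefore the asserted uniform convergence.

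I do not anticipate a serious obstacle, since this is essentially a textbook good-kernel argument; the only points needing care are that the modulus-of-continuity radius $\eta$ is chosen uniformly in $x$ (guaranteed by compact support together with continuity) and that the positivity of $K_\delta$ is precisely what allows replacing $|K_\delta|$ by $K_\delta$ in the near estimate, so that the normalization (1) applies cleanly. The Fubini interchange implicit in forming the convolution is harmless because $f$ is bounded with compact support and $K_\delta$ is integrable.
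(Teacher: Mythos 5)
Your proposal is correct and follows essentially the same route as the paper's own proof: both split the convolution integral at radius $\eta$, use the (uniform) continuity of the compactly supported $f$ to control the near piece via the normalization condition, and use condition (2) together with the bound $2\norm{f}_{\infty}$ to kill the far piece. Your write-up is in fact slightly more careful than the paper's, in making explicit the uniform choice of $\eta$ and the role of positivity in replacing $|K_{\delta}|$ by $K_{\delta}$, but the underlying argument is identical.
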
 
\begin{proof}
By continuity, $\forall \epsilon>0$, $\exists \eta>0$ such that $|y|<\eta$ implies $|f(x-y)-f(x)|<\epsilon$. We can then estimate
\begin{align}\label{estimate}
&|K_{\delta}\ast f-f(x)|\leq 
|f(x-y)-f(x)|\int\limits_{|y|\leq \eta}|K_{\delta}(y)|\,dy+\norm{2f(x)}_{C^0(\mathbb{R}^n)}\int\limits_{|y|> \eta}|K_{\delta}(y)|\,dy \\& \nonumber<\epsilon\sparen{\norm{2f(x)}_{C^0(\mathbb{R}^n)}+1}
\end{align}
whenever $\delta$ is sufficiently small. We could expand the proof to include $\forall f(x)\in L^1_{loc}(\mathbb{R}^n)$ but then the result is almost everywhere convergence as in \cite{steinreal}
\end{proof}

A corollary to the Lemma is:
\begin{cor}\label{cutoff}
There exists a cutoff function $\chi_{\epsilon_1}(x)\in C^{\infty}(\mathbb{R}^n)$ such that for $\epsilon_1>0$
\begin{align}
&\chi(x)=0 \qquad \mathrm{if} \qquad x\in \{x: d_g(x,x(t))>2^{\frac{1}{n}}\epsilon_1^{\frac{1}{2n\alpha}}\quad 0\leq t\leq T\} \\& \nonumber
\chi(x)=1  \qquad \mathrm{if} \qquad x\in \{x: d_g(x,x(t))<\epsilon_1^{\frac{1}{2n\alpha}}\quad 0\leq t\leq T\} \nonumber
\end{align}
and also for $m\in \mathbb{N}$. 
\begin{align}
\sup\limits_{x\in \mathbb{R}^n}|\nabla_g^{m}(\chi(x))|<2\epsilon_1^{-\frac{m}{2\alpha}}
\end{align}
for some constant $C$ which is independent of $\epsilon_1$, $n=\mathrm{dim}(\mathcal{M})$ and $\alpha\in\mathbb{R}$ is such that $\alpha>1$
\end{cor}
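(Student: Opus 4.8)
The plan is to build $\chi_{\epsilon_1}$ as a mollification of the indicator function of a tubular neighborhood of the curve, using a smooth compactly supported good kernel in the sense of Lemma \ref{goodkernel}. Write $r_1=\epsilon_1^{1/(2n\alpha)}$ for the inner radius and $r_2=2^{1/n}r_1$ for the outer radius, so that $r_2^n=2r_1^n$, and fix the intermediate radius $\rho=\tfrac{1}{2}(r_1+r_2)$. Let $S_\rho=\{x:\inf_{0\le t\le T}d_g(x,x(t))\le\rho\}=\bigcup_{t\in[0,T]}\overline{B}_g(x(t),\rho)$ be the $\rho$-tube around the geodesic and set $f=\mathbf{1}_{S_\rho}$. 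Since $\mathcal{M}$ is compact and the metric is at least $C^3$, for $\epsilon_1$ small this tube is a genuine tubular neighborhood, best described in the Fermi coordinates along $x(t)$ used in the Appendix, where $d_g(x,x(t))$ agrees with the Euclidean transverse distance up to higher order corrections.

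Next I would set $\chi_{\epsilon_1}=K_\delta\ast f$, where $K_\delta(x)=\delta^{-n}\phi(x/\delta)$ for a fixed nonnegative $\phi\in C_c^\infty(B_1)$ with $\int\phi=1$; this $K_\delta$ is a good kernel in the sense of Lemma \ref{goodkernel}, so $\chi_{\epsilon_1}$ is smooth. The support and value-one properties follow from the support calculus of convolution: $\mathrm{supp}\,\chi_{\epsilon_1}\subset S_\rho+B_\delta$, and $\chi_{\epsilon_1}\equiv 1$ at every point whose entire $\delta$-ball lies in $S_\rho$. Choosing $\delta=\tfrac{1}{2}(2^{1/n}-1)r_1$ gives exactly $r_1+\delta=\rho$ and $\rho+\delta=r_2$, so after transferring between Euclidean and metric distance (uniformly comparable in the Fermi chart) the zero set contains $\{\inf_t d_g(x,x(t))>r_2\}$ and the one-set contains $\{\inf_t d_g(x,x(t))<r_1\}$, as claimed.

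For the derivative bound I would differentiate under the convolution, $\nabla_g^m\chi_{\epsilon_1}=(\nabla^mK_\delta)\ast f$ up to lower order metric corrections, and use $\|\nabla^mK_\delta\|_{L^1}=\delta^{-m}\|\nabla^m\phi\|_{L^1}$ to get $\sup_x|\nabla_g^m\chi_{\epsilon_1}|\le C_m\delta^{-m}\sim\epsilon_1^{-m/(2n\alpha)}$. The essential observation is that the claimed bound $\epsilon_1^{-m/(2\alpha)}=r_1^{-nm}$ is weaker than this natural bound $r_1^{-m}$ by the factor $\epsilon_1^{-(n-1)m/(2n\alpha)}$, which tends to $+\infty$ as $\epsilon_1\to 0$ because $n\ge 2$; hence for $\epsilon_1$ small this slack absorbs the mollifier constant $C_m$ and the chart comparison constants, yielding $\sup_x|\nabla_g^m\chi_{\epsilon_1}|<2\epsilon_1^{-m/(2\alpha)}$.

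The main obstacle is geometric bookkeeping rather than any single hard estimate: one must verify that $S_\rho$ is an embedded tube (no self-overlap of the geodesic at scale $\rho$, which holds for small $\epsilon_1$ by the admissibility hypothesis), and that the Euclidean mollification smoothing $\mathbf{1}_{S_\rho}$ reproduces the stated metric radii $r_1,\ r_2=2^{1/n}r_1$ together with the metric derivative scaling. Working in Fermi coordinates makes the metric and Euclidean transverse distances agree to leading order, so the $2^{1/n}$ slack in the outer radius is precisely what is needed to absorb the $O(\rho)$ distortion of the metric and the finite comparison constants between $\nabla_g$ and $\nabla$; carefully tracking these constants, and reading the derivative bound with an $m$-dependent constant for the finitely many $m$ required later, is the only delicate point.
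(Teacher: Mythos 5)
Your proposal is correct and follows essentially the same route as the paper: mollify the indicator function of a tube around the curve with a compactly supported good kernel in the sense of Lemma \ref{goodkernel}, using the embedding/Fermi-coordinate comparison to pass between metric and Euclidean distances, and get the derivative bounds from the scaling of the kernel. Your version is in fact slightly more careful than the paper's own proof, which convolves the indicator of the \emph{inner} tube ``while taking $\delta\rightarrow 0$'': fixing the mollification scale $\delta\sim\epsilon_1^{1/(2n\alpha)}$ and mollifying the indicator of an intermediate tube, as you do, is what actually yields simultaneously the one-set, the zero-set, and the stated gradient estimates.
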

\begin{proof}
This is a consequence of Lemma \ref{goodkernel}. We take a step function which is $1$ on $\{x: d_g(x,x(t))<\epsilon_1^{\frac{1}{2n\alpha}}, 0\leq t\leq T\}$, and convolve it with a compactly supported good kernel while taking $\delta\rightarrow 0$. We consider $\mathcal{M}$ to be an embedded submanifold of $\mathbb{R}^n$, and points, $x$ not on $\mathcal{M}$ inherit the topology of the manifold if they are close by, so the gradient estimate makes sense. This possible because the manifold is smooth, so a Taylor series expansion of the metric exists in a small neighborhood.    
\end{proof}

\section{Gaussian Beams from the Boundary}
We would like to construct Gaussian beam solutions from initial data on the boundary of the manifold which pass through the interior of the manifold and are concentrated along geodesic curves in space-time. By knowing the collection of Dirichlet-to-Neumann maps, we can select any function $f_{\lambda}(t,x)$ as data for our initial boundary value problem:
\begin{align}\label{ivpf}
&(\Box_g+q(t,x))u(t,x)=0 \qquad\qquad \mathrm{on} \qquad (0,T)\times \mathcal{M}
\\&u(t,x)|_{t=0}=\partial_t u(t,x)|_{t=0}=0 \qquad \mathrm{in} \qquad \mathcal{M}, \nonumber
\\&u(t,x)=f_{\lambda}(t,x) \qquad \qquad \quad \mathrm{on} \qquad (0,T)\times \partial\mathcal{M} \nonumber
\end{align} 
As a direct result of Theorem \ref{gaussians}, we have the following corollary:
\begin{cor} \label{boundarybeams}
Given $(x_0,\omega_0)\in  \partial\mathcal{SM}^+$ and $t_0>0$, we can build build any $0^{th}$ order Gaussian beam, which we denote as $U_{\lambda}(t,x)$ satisfying the following requirements:
 \begin{enumerate}
 \item $a_0(t_0,x_0)=1$
 \item $\nabla_x\psi(t_0,x_0)=-\omega_0$ \qquad 
 \item $\psi_t(t_0,x_0)=1$.
 \item $\mathrm{supp}(U_{\lambda}(t_0,x))\subset B_{\epsilon_1^{\frac{1}{2n\alpha}}}(t_0,x_0)$
 \end{enumerate}
\end{cor}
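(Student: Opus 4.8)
The plan is to realize this corollary as a direct specialization of the Gaussian beam construction in Theorem \ref{gaussians}, where the freedom in choosing the initial data for the bicharacteristic flow and the transport equation is exactly what produces requirements (1)--(4). First I would observe that Theorem \ref{gaussians} constructs a zeroth order beam $U_\lambda(t,x)=(\lambda/\pi)^{n/4}\exp(i\lambda\psi(t,x))a_0(t,x)$ along a geodesic $x(t)$, and that the entire construction is parametrized by the initial data $(x(0),\omega(0))\in\partial_+\mathcal{SM}$ for the Hamiltonian system \eqref{systemomega}, the initial phase data \eqref{icephase}, and the initial amplitude $a_{0,0}(0)$ in \eqref{a0}. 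Given the prescribed $(x_0,\omega_0)$ and $t_0$, I would run the bicharacteristic flow \eqref{systemomega} so that the geodesic passes through $x_0$ with the correct covector at time $t_0$; concretely, I would choose the initial conditions at $t=0$ so that $x(t_0)=x_0$ and $\omega(t_0)=\omega_0$, which is possible by solving \eqref{systemomega} backward from $t_0$ to $0$ on the interval $[0,T]$, the flow being well-posed there by the fundamental theorem of ODEs as noted after \eqref{systemomega}.

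Next I would verify requirements (2) and (3) from the structure of the phase. Along the curve we have $\nabla_x\psi(t,x(t))=\omega(t)$ by construction (the relation $\psi_1(t)=\omega(t)$ established in the proof of Theorem \ref{gaussians}), so evaluating at $t=t_0$ gives $\nabla_x\psi(t_0,x_0)=\pm\omega_0$; choosing the sign branch $\psi_t=-h(x,\psi_x)$ rather than \eqref{hflow} fixes the sign so that $\nabla_x\psi(t_0,x_0)=-\omega_0$. For (3), the eikonal equation \eqref{eikonal} forces $(\psi_t)^2=h^2(x,\psi_x)$ along the curve, and since $|\omega_0|_g=1$ we have $h(x_0,\omega_0)=1$, giving $\psi_t(t_0,x_0)=1$ for the appropriate branch. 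For requirement (1), I would use the explicit solution \eqref{a0}: since $a_{0,0}(t)=a_{0,0}(0)\sigma(t)$ with $\sigma(t)$ a nonvanishing factor, I set $a_{0,0}(0)=\sigma(t_0)^{-1}$ so that $a_0(t_0,x_0)=a_{0,0}(t_0)=1$, using that $a_0(t,x(t))=a_{0,0}(t)$ by the Taylor expansion \eqref{taylor}.

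Finally, to obtain the support condition (4) I would multiply the formal beam by the cutoff function $\chi_{\epsilon_1}$ from Corollary \ref{cutoff}, which equals $1$ on a tube of radius $\epsilon_1^{1/(2n\alpha)}$ about $x(t)$ and vanishes outside radius $2^{1/n}\epsilon_1^{1/(2n\alpha)}$; restricting to $t=t_0$ confines $\mathrm{supp}(U_\lambda(t_0,\cdot))$ to the ball $B_{\epsilon_1^{1/(2n\alpha)}}(t_0,x_0)$ as required, while Corollary \ref{expsize} guarantees that the truncation error is exponentially small in $\lambda$ so the beam still approximately solves the equation. The main obstacle I anticipate is bookkeeping the sign conventions consistently: the two branches $\psi_t=\pm h$ and the choice of inward-pointing $\omega_0\in\partial_+\mathcal{SM}$ must be matched so that all three normalizations (1)--(3) hold simultaneously with the stated signs, and one must confirm that the backward-in-time solve of \eqref{systemomega} indeed keeps $(x(0),\omega(0))$ in $\partial_+\mathcal{SM}$ so that the admissibility hypothesis on $\mathcal{M}$ applies and the beam stays in the interior for $0<t<\tau$. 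Everything else is a direct reading-off of the already-established formulas.
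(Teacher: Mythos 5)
Your overall route is the one the paper intends: the paper offers no separate proof of Corollary \ref{boundarybeams}, asserting it as a direct consequence of Theorem \ref{gaussians}, and your reading of the free parameters is mostly the right one --- the amplitude normalization via \eqref{a0} (choose $a_{0,0}(0)=\sigma(t_0)^{-1}$, legitimate since $\sigma(t)$ never vanishes) and the cutoff of Corollary \ref{cutoff} for the support requirement are exactly how (1) and (4) are meant to be arranged (with the small caveat that the cutoff of Corollary \ref{cutoff} only vanishes outside radius $2^{\frac{1}{n}}\epsilon_1^{\frac{1}{2n\alpha}}$, so to get support literally inside $B_{\epsilon_1^{\frac{1}{2n\alpha}}}(t_0,x_0)$ you must shrink its scale by a fixed factor).

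The genuine gap is the sign matching for requirements (2) and (3), which you flag as an anticipated obstacle but then resolve in a way that fails. With your choice $\omega(t_0)=\omega_0$, the relation $\nabla_x\psi(t,x(t))=\omega(t)$ is a normalization of the first Taylor coefficient of $\psi$ and holds on \emph{either} branch of the eikonal equation; so switching to the branch $\psi_t=-h(x,\psi_x)$ does not produce $\nabla_x\psi(t_0,x_0)=-\omega_0$, and it forces $\psi_t(t_0,x_0)=-h(x_0,\omega_0)=-1$, violating (3). The consistent choice is essentially forced: keep the branch \eqref{hflow}, $\psi_t=+h$, whose characteristic system is \eqref{systemomega} with the minus sign $\dot x=-h_p$, and impose the data at the time $t_0$ itself, namely $x(t_0)=x_0$ and the \emph{momentum} condition $\omega(t_0)=-\omega_0$. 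Then $\nabla_x\psi(t_0,x_0)=\omega(t_0)=-\omega_0$ gives (2); $\psi_t(t_0,x_0)=h(x_0,-\omega_0)=|\omega_0|_g=1$ gives (3), and this value is preserved along the curve since $h$ is conserved by its own flow; and the velocity $\dot x(t_0)=-h_p(x_0,-\omega_0)$ points along $+\omega_0$, hence inward, so the geodesic enters $\mathcal{M}$ and the admissibility hypothesis supplies the exit time $\tau(x_0,\omega_0)\leq T$. This is precisely why requirement (2) carries a minus sign: with the Hamiltonian convention of \eqref{systemomega}, the phase momentum along the beam is opposite to its direction of travel. Setting the data at $t_0$ also removes your backward-solve step, which as written is not merely a worry but unworkable: for $t<t_0$ the geodesic lies outside $\mathcal{M}$ (an inward geodesic traced backward exits through $x_0$), so $(x(0),\omega(0))$ cannot lie in $\partial_+\mathcal{SM}$ and the flow on $[0,t_0]$ is not defined without extending the metric. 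One should instead solve \eqref{systemomega} forward from the data at $t_0$, exactly as the tube $\{|t-r-t_0|+d_g(x,x(r))<\epsilon_1^{\frac{1}{2n\alpha}},\ 0\leq r\leq T\}$ of Lemma \ref{gcutoff} (built around $(r+t_0,x(r))$ with $x(0)=x_0$) indicates, and let the cutoff kill the beam for $t\leq 0$.
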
 
where $0<\epsilon_1<1$ is a small positive number. 
We further claim   
\begin{lem}\label{gcutoff}
Let $t_0=2\epsilon_0^ {\frac{1}{2n\alpha}}>0$ and $T>\mathrm{diam}_g(M)+4\epsilon_0^ {\frac{1}{2n\alpha}} $, and set $\epsilon_1=\norm{\Lambda_{g,q_1}-\Lambda_{g,q_2}}_{H_0^1\rightarrow L^2}$, then there is a Gaussian beam approximation to the solution $u$ which takes the form
\begin{align*}
U_{\lambda}(t,x)\chi_{\epsilon_1}(t,x)
\end{align*}
where $U_{\lambda}(t,x)\chi_{\epsilon_1}(t,x)=0$ if $t\geq T$ and $t\leq 0$, $\forall x\in\mathcal{M}$
\end{lem}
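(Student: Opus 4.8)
The plan is to glue the two constructions already in hand and then reduce the claim to a single distance estimate. First I would apply Corollary~\ref{boundarybeams} to produce the zeroth order beam $U_\lambda$ with center launched at time $t_0$ from $(x_0,\omega_0)\in\partial_+\mathcal{SM}$; the normalizations $\nabla_x\psi(t_0,x_0)=-\omega_0$ and $\psi_t(t_0,x_0)=1$, together with the homogeneity of $h(x,p)=\sqrt{g^{kl}p_kp_l}$, force $\sabs{dx/dt}_g=\sabs{h_p}_g=1$, so the center $x(t)$ runs along the unit-speed geodesic $\gamma_{x_0,\omega_0}$, entering $\mathcal{M}$ at $t=t_0$ and, by the admissibility criterion, exiting transversally at $t=t_0+\tau$ with $\tau=\tau(x_0,\omega_0)\le \mathrm{diam}_g(\mathcal{M})$. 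I would extend the geodesic slightly into an ambient collar so that $x(t)$ is defined on all of $[0,T]$. Next, Corollary~\ref{cutoff} furnishes $\chi_{\epsilon_1}$, supported in the space-time tube $\{(t,x): d_g(x,x(t))\le R\}$ with $R:=2^{1/n}\epsilon_1^{1/(2n\alpha)}$. Since every point of the support of $U_\lambda\chi_{\epsilon_1}$ at time $t$ lies within $d_g$-distance $R$ of $x(t)$, the product vanishes identically on $\mathcal{M}$ at time $t$ as soon as $d_g(x(t),\mathcal{M})>R$. Thus the lemma reduces to establishing $d_g(x(t),\mathcal{M})>R$ for every $t\le 0$ and every $t\ge T$.

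The smallness budget is what makes this possible. Since $\epsilon_1=\norm{\Lambda_{g,q_1}-\Lambda_{g,q_2}}_{H_0^1\rightarrow L^2}<\epsilon_0$ and $2^{1/n}<2$ for $n\ge 2$, one has $R<2\epsilon_0^{1/(2n\alpha)}=t_0$, while the hypothesis $T>\mathrm{diam}_g(\mathcal{M})+4\epsilon_0^{1/(2n\alpha)}$ rearranges to $T-(t_0+\mathrm{diam}_g(\mathcal{M}))>2\epsilon_0^{1/(2n\alpha)}>R$. For $t\le 0$ the center has retreated from its entry point $x_0\in\partial\mathcal{M}$ for elapsed arc length $t_0-t\ge t_0$; for $t\ge T$ it has advanced past its exit point for elapsed arc length $t-(t_0+\tau)\ge T-t_0-\mathrm{diam}_g(\mathcal{M})$. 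In both cases the elapsed time along the beam geodesic strictly exceeds $R$, and I would convert this into the required bound $d_g(x(t),\mathcal{M})>R$.

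The main obstacle is exactly this last conversion, since elapsed arc length is only an upper bound for $d_g(x(t),\mathcal{M})$: a nearly tangential crossing would let the center linger near $\partial\mathcal{M}$. Transversality of the entry and exit, guaranteed by the admissibility criterion and by $(x_0,\omega_0)\in\partial_+\mathcal{SM}$, gives a positive lower bound $c_0=\sabs{\langle\omega_0,\nu\rangle}_g>0$ on the normal speed, so that $d_g(x(t),\mathcal{M})\ge c_0\,(\text{elapsed time})+o(\text{elapsed time})$ near the crossings; as $(x_0,\omega_0)$ is fixed, $c_0$ is a fixed constant and I would absorb it by shrinking the finite $\epsilon_0$ of Theorem~\ref{main}, which controls both $t_0$ and $R$. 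I must also rule out the geodesic re-approaching $\mathcal{M}$ at later times $t\ge T$, where the non-trapping content of the admissibility criterion together with the transversal exit and the short collar extension keeps $x(t)$ away from $\mathcal{M}$. With these geometric estimates the spatial cut factor kills the product on $\mathcal{M}$ for $t\le 0$ and $t\ge T$, which is the assertion. Finally, although $U_\lambda\chi_{\epsilon_1}$ is no longer an exact formal beam, the commutator terms produced when derivatives strike $\chi_{\epsilon_1}$ are supported in the shell $\epsilon_1^{1/(2n\alpha)}<d_g(x,x(t))<R$, where Corollary~\ref{expsize} renders $U_\lambda$ exponentially small in $\lambda$; I would record this for later use even though the present statement concerns only the temporal support.
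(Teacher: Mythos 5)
Your construction diverges from the paper's at the choice of cutoff, and that divergence is where the argument breaks. The paper does not use a purely spatial cutoff around the instantaneous center; its proof invokes a modification of Corollary \ref{cutoff} that produces a genuinely \emph{space-time} cutoff, equal to $1$ when $|t-r-t_0|+d_g(x,x(r))<\epsilon_1^{\frac{1}{2n\alpha}}$ for some $r\in[0,T]$ and vanishing when $|t-r-t_0|+d_g(x,x(r))>2\epsilon_1^{\frac{1}{2n\alpha}}$ for all $r\in[0,T]$, i.e.\ a tube in the combined space-time distance around the shifted ray path $\{(r+t_0,x(r)):0\le r\le T\}$. With that cutoff, vanishing for $t\le 0$ is pure bookkeeping in the time variable: for every $r\ge 0$ one has $|t-r-t_0|\ge t_0=2\epsilon_0^{\frac{1}{2n\alpha}}>2\epsilon_1^{\frac{1}{2n\alpha}}$ (since $\epsilon_1<\epsilon_0$), with no estimate at all on how fast the geodesic recedes from $\mathcal{M}$; likewise, for $t\ge T$ the margin $T-\mathrm{diam}_g(\mathcal{M})-t_0\ge 2\epsilon_0^{\frac{1}{2n\alpha}}$ disposes of every path point still inside $\mathcal{M}$, leaving only post-exit points to worry about. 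Your cutoff, supported in $\{(t,x):d_g(x,x(t))\le R\}$ with $R=2^{\frac{1}{n}}\epsilon_1^{\frac{1}{2n\alpha}}$, carries no temporal component, so the whole lemma is made to rest on the spatial estimate $d_g(x(t),\mathcal{M})>R$ for all $t\le 0$ and $t\ge T$ --- an estimate the paper never needs for $t\le 0$ and needs only in attenuated form for $t\ge T$.

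That estimate is exactly what your proposed fixes do not deliver. First, the transversality argument gives $d_g(x(t),\mathcal{M})\approx c_0\cdot(\text{elapsed arc length})$ near the crossing, and your time budget gives elapsed length at least $2\epsilon_0^{\frac{1}{2n\alpha}}$; so you need $2c_0\,\epsilon_0^{\frac{1}{2n\alpha}}>2^{\frac{1}{n}}\epsilon_1^{\frac{1}{2n\alpha}}$. But the only control on $\epsilon_1=\norm{\Lambda_{g,q_1}-\Lambda_{g,q_2}}_{H_0^1\rightarrow L^2}$ is $\epsilon_1<\epsilon_0$, and as $\epsilon_1\uparrow\epsilon_0$ this inequality degenerates to $c_0\ge 2^{\frac{1}{n}-1}$, a quantitative lower bound on the crossing angle that a transversal but nearly tangential geodesic violates. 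Shrinking $\epsilon_0$ cannot ``absorb'' $c_0$: both sides scale as $\epsilon_0^{\frac{1}{2n\alpha}}$ in this worst case, so the obstruction is scale-invariant. Second, your appeal to ``the non-trapping content of the admissibility criterion'' to keep $x(t)$ away from $\mathcal{M}$ for $t\ge T$ is not available: the admissibility hypothesis constrains geodesics \emph{inside} $\mathcal{M}$ and says nothing about the ambient/collar extension after exit, which for a non-convex $\mathcal{M}$ can curve back, re-approach, or even re-enter $\mathcal{M}$; at such a time $d_g(x(t),\mathcal{M})=0$, your cutoff equals $1$ near $x(t)$, and the beam amplitude $a_0$ is nonvanishing along its center, so the product does not vanish on $\mathcal{M}$ even though $t\ge T$. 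The repair is to follow the paper: put the time variable into the cutoff (the combined distance $|t-r-t_0|+d_g(x,x(r))$), so that the hypotheses $t_0=2\epsilon_0^{\frac{1}{2n\alpha}}$ and $T>\mathrm{diam}_g(\mathcal{M})+4\epsilon_0^{\frac{1}{2n\alpha}}$, together with $\epsilon_1<\epsilon_0$, do the work instead of geometric lower bounds on the escape rate of the geodesic.
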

\begin{proof}
From Corollary \ref{boundarybeams}, there exists a zeroth order Gaussian beam approximation to $(\Box_g+q(t,x))u(t,x)=0$ From a modification of Corollary \ref{cutoff} we know there exists a smooth function $\chi_{\epsilon_1}(t,x)$ such that $\chi_{\epsilon_1}(t,x)\in C^{\infty}([0,T]\times\mathbb{R}^n)$ and
\begin{align*}
& \chi_{\epsilon_1}(t,x)=1 \quad \mathrm{if} \quad  (t,x)\in \{(t,x):|t-r-t_0|+d_g(x,x(r))<\epsilon_1^{\frac{1}{2n\alpha}}, 0\leq r\leq T \}\\
& \chi_{\epsilon_1}(t,x)=0 \quad \mathrm{if} \quad  (t,x)\in \{(t,x): |t-r-t_0|+ d_g(x,x(r))>2\epsilon_1^{\frac{1}{2n\alpha}}, 0\leq r \leq T \}
\end{align*}
We claim that our desired Gaussian beam is of the form 
\begin{align*}
U_{\lambda}(t,x)\chi_{\epsilon_1}(t,x)
\end{align*}
The cutoff then forces $U_{\lambda}(t,x)\chi_{\epsilon_1}(t,x)=0$ when $t\leq 0$ and $t\geq T, \forall x\in \mathcal{M}$, if we select $U_{\lambda}(t,x)$ satisfying the conditions of Corollary \ref{boundarybeams}. The choice of cutoff is motivated by the one in \cite{M}, however the way we are building the solution is different. 
\end{proof}

We can now construct a true solution to the wave equation which is localized along the ray path $\{(t,x(t)): 0\leq t\leq T\}$.  The treatment of the error estimates is similar to a combination of \cite{DDSF} and Liu and Ralston \cite{liu}.
\begin{lem}\label{error}
There is a solution to (\ref{ivpf}) of the form  
\begin{align*}
u(t,x)=U_{\lambda}(t,x)\chi_{\epsilon_1}(t,x)+R_{\lambda}(t,x)
\end{align*}
where $R_{\lambda}(t,x)$ satisfies
\begin{align}\label{okay}
&(\Box_g+q(t,x))R(t,x)=\\& \nonumber (\Box_g+q(t,x))(u(t,x)-U_{\lambda}(t,x)\chi_{\epsilon_1}(t,x))\qquad \mathrm{on} \qquad (0,T)\times \mathcal{M}
\\&R(t,x)|_{t=0}=\partial_t R(t,x)|_{t=0}=0 \qquad \mathrm{in} \qquad \mathcal{M}, \nonumber
\\&R(t,x)=0\qquad \qquad \quad \mathrm{on} \qquad (0,T)\times \partial\mathcal{M} \nonumber
\end{align}
and also 
\begin{align}\label{errorestimates}
&\sup_{t\in [0,T]}\sparen{\lambda\norm{R_{\lambda}(t,x)}_{L^2(\mathcal{M})}+\norm{R_{\lambda}(t,x)}_{\dot{H}^1(\mathcal{M})}+\norm{\partial_tR(t,x)}_{L^2({\mathcal{M})}}}\leq C\epsilon_1^{-\frac{1}{\alpha}}
\end{align}
where $C$ is independent of $\lambda$ and $\epsilon_1$, but depends on the $C^1([0,T]; C^0(\mathcal{M}))$ norm of $q(t,x)$.
\end{lem}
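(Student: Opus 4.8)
The plan is to realize $R_\lambda$ as the difference between the genuine solution and the approximate Gaussian beam, to check directly that it solves (\ref{okay}), and then to control it by two energy estimates for $\Box_g+q$: a direct one for the $\dot{H}^1$ and $\partial_t$ contributions, and a duality estimate for the improved $L^2$ contribution that carries the extra factor of $\lambda$. First I would let $u$ be the unique solution furnished by Lemma \ref{wellposed} to (\ref{ivpf}) with Dirichlet data $f_\lambda:=(U_\lambda\chi_{\epsilon_1})|_{(0,T)\times\partial\mathcal{M}}$, and set $R_\lambda=u-U_\lambda\chi_{\epsilon_1}$. By linearity $(\Box_g+q)R_\lambda=-(\Box_g+q)(U_\lambda\chi_{\epsilon_1})$, which is exactly the source in (\ref{okay}). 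The boundary condition $R_\lambda|_{(0,T)\times\partial\mathcal{M}}=0$ holds by the choice of $f_\lambda$, and the vanishing Cauchy data $R_\lambda|_{t=0}=\partial_t R_\lambda|_{t=0}=0$ follow because Lemma \ref{gcutoff} guarantees $U_\lambda\chi_{\epsilon_1}\equiv 0$ for $t\le 0$, while $u$ has zero Cauchy data. This disposes of the first three lines of (\ref{okay}).

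Next I would expand the source with the Leibniz rule for $\Box_g$. Setting $F=(\Box_g+q)(U_\lambda\chi_{\epsilon_1})$, one has
\begin{align*}
F=\chi_{\epsilon_1}(\Box_g+q)U_\lambda+U_\lambda\Box_g\chi_{\epsilon_1}+2\partial_t U_\lambda\,\partial_t\chi_{\epsilon_1}-2g^{kl}\partial_{x_k}U_\lambda\,\partial_{x_l}\chi_{\epsilon_1}.
\end{align*}
The first term is the genuine beam residual; by Theorem \ref{gaussians}, with the phase and amplitudes solved to the orders dictated there, its $L^2(\mathcal{M})$ norm is controlled by a constant independent of $\lambda$, and since $\epsilon_1<1$ forces $\epsilon_1^{-1/\alpha}\ge 1$ this is absorbed into the target bound. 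The three commutator terms are supported in the transition shell of $\chi_{\epsilon_1}$, where Corollary \ref{cutoff} contributes $\epsilon_1^{-1/(2\alpha)}$ from each first derivative and $\epsilon_1^{-1/\alpha}$ from the second derivatives in $\Box_g\chi_{\epsilon_1}$; there I would use the Gaussian decay of Corollary \ref{expsize} to bound the beam factor away from the geodesic. Collecting the terms gives $\norm{F}_{L^1([0,T];L^2(\mathcal{M}))}\le C\epsilon_1^{-1/\alpha}$.

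The $\dot{H}^1$ and $\partial_t$ parts of (\ref{errorestimates}) then follow at once from the energy bound (\ref{normbound}) of Lemma \ref{wellposed} applied to $R_\lambda$ (which has zero Dirichlet and zero Cauchy data), yielding $\sup_t(\norm{R_\lambda}_{H^1(\mathcal{M})}+\norm{\partial_t R_\lambda}_{L^2(\mathcal{M})})\le C\norm{F}_{L^1([0,T];L^2(\mathcal{M}))}\le C\epsilon_1^{-1/\alpha}$. The remaining term $\lambda\norm{R_\lambda}_{L^2(\mathcal{M})}$ is the crux, since the direct estimate only gives $\norm{R_\lambda}_{L^2(\mathcal{M})}\le C\epsilon_1^{-1/\alpha}$ without the factor $\lambda$. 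To gain that factor I would argue by duality: for fixed $t_1\in(0,T]$ and $\phi\in L^2(\mathcal{M})$, let $w$ solve the time-reversed problem $(\Box_g+q)w=0$ on $(0,t_1)$ with $w(t_1)=0$, $\partial_t w(t_1)=\phi$ and $w|_{\partial\mathcal{M}}=0$. Pairing $(\Box_g+q)R_\lambda=F$ with $w$ and integrating by parts twice in $t$, using self-adjointness of $-\Delta_g$ under the vanishing boundary conditions together with the vanishing Cauchy data of $R_\lambda$, produces $\langle R_\lambda(t_1),\phi\rangle=-\int_0^{t_1}\langle F,w\rangle\,dt$. Estimating $|\langle F,w\rangle|\le\norm{F}_{H^{-1}(\mathcal{M})}\norm{w}_{H^1(\mathcal{M})}$ and invoking the backward energy estimate $\sup_t\norm{w}_{H^1(\mathcal{M})}\le C\norm{\phi}_{L^2(\mathcal{M})}$ yields $\norm{R_\lambda(t_1)}_{L^2(\mathcal{M})}\le C\norm{F}_{L^1([0,T];H^{-1}(\mathcal{M}))}$. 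Finally, since each term of $F$ carries the oscillatory factor $\exp(i\lambda\psi)$ with $\nabla\psi\ne 0$ on the beam support, a non-stationary phase integration by parts gives $\norm{F}_{H^{-1}(\mathcal{M})}\le C\lambda^{-1}\norm{F}_{L^2(\mathcal{M})}$, whence $\lambda\norm{R_\lambda}_{L^2(\mathcal{M})}\le C\epsilon_1^{-1/\alpha}$ as required.

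The principal obstacle is precisely this gain of one power of $\lambda$ in the $L^2$ estimate: the duality scheme must be arranged so that the adjoint wave $w$ is measured only in $H^1$ (matching its $L^2$ Cauchy datum) while $F$ is measured in $H^{-1}$, and the non-stationary phase gain must be justified uniformly on the cut-off beam support, controlling the derivatives of the slowly varying amplitude against the $\lambda$-oscillation. A secondary point is the bookkeeping in the source splitting: one must verify that the transition shell of $\chi_{\epsilon_1}$ sits far enough from the geodesic, relative to the beam width $\lambda^{-1/2}$, that the Gaussian decay of Corollary \ref{expsize} overwhelms the polynomial growth $\epsilon_1^{-1/\alpha}$ of the cutoff derivatives; this is where the admissibility restriction relating $T$, $\mathrm{diam}_g(\mathcal{M})$ and $\epsilon_0$ enters.
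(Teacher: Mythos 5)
Your setup, your verification of (\ref{okay}), your splitting and bound of the source $F=(\Box_g+q)(U_\lambda\chi_{\epsilon_1})$, and your direct energy estimate for the $\dot{H}^1$ and $\partial_t$ parts all coincide with the paper's proof (which records the source bound as (\ref{ibound}) and then applies Lemma \ref{wellposed}). Where you genuinely diverge is the crucial gain of the factor $\lambda$ in the $L^2$ bound. The paper does this by a time-integration trick: it sets $k_1(t,x)=\int_0^t k(s,x)\,ds$, integrates by parts in $s$ using the \emph{temporal} oscillation ($\psi_s\approx 1$ by the eikonal equation, so it never degenerates on the beam support), observes that $\int_0^t R_\lambda(s,x)\,ds$ solves the wave equation with source $k_1+\int_0^t(q(t,x)-q(s,x))R_\lambda(s,x)\,ds$, and closes with Gronwall; the commutator term is exactly where the time dependence of $q$ and its $C^1([0,T];C^0(\mathcal{M}))$ regularity enter. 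Your duality scheme (pairing with the backward solution and estimating $\norm{F}_{L^1([0,T];H^{-1})}$, with the gain coming from \emph{spatial} non-stationary phase, $\nabla_x\psi\approx\omega(t)\neq 0$) is a legitimately different route, and it has the attractive feature that no Gronwall argument is needed at all: the adjoint problem carries the same time-dependent potential, so the commutator never appears.

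However, the pivotal inequality as you state it, $\norm{F}_{H^{-1}(\mathcal{M})}\leq C\lambda^{-1}\norm{F}_{L^2(\mathcal{M})}$, is not a real inequality: no bound of the $H^{-1}$ norm by $\lambda^{-1}$ times the $L^2$ norm of the \emph{same} function can hold with a constant independent of that function. What integration by parts actually yields is $\norm{Be^{i\lambda\psi}}_{H^{-1}}\leq C\lambda^{-1}\sparen{\norm{Be^{i\lambda\psi}}_{L^2}+\norm{(\nabla_g B)e^{i\lambda\psi}}_{L^2}}$, and for a Gaussian beam each derivative of the amplitude costs $\lambda^{1/2}$ in the Gaussian-weighted sense: the worst term in $B$ is $\lambda^2c_0\chi_{\epsilon_1}$ with $c_0=\mathcal{O}(d_g(x,x(t))^4)$ by (\ref{orderc}), so $\lambda^2\nabla_g c_0=\lambda^2\mathcal{O}(d_g^3)$, which against $\exp(-\lambda C d_g^2)$ contributes $\mathcal{O}(\lambda^{1/2})$ rather than $\mathcal{O}(1)$. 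Thus one spatial integration by parts nets only $\lambda^{-1/2}$ for the zeroth-order beam, and your argument as written proves $\lambda^{1/2}\norm{R_\lambda}_{L^2}\leq C\epsilon_1^{-1/\alpha}$, not (\ref{errorestimates}). The repair is to construct the beam one order higher (so that $c_0$, $c_1$ vanish to two extra orders, which Theorem \ref{gaussians} permits, at the price of more regularity of $q$), or to impose the extra vanishing directly on the phase and first amplitude. To be candid, the paper's own proof faces the identical accounting issue in its asserted bound on $\partial_tB$ in (\ref{ibound}) (since $\partial_t c_0=\mathcal{O}(d_g^3)$ as well), so the two mechanisms stand or fall together on this point; but since you are claiming a complete proof, this step is the one you must make honest.
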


\begin{proof}
We make the definition
\begin{align*}
f_{\lambda}(t,x)=U_{\lambda}(t,x)\chi_{\epsilon_1}(t,x)|_{(0,T)\times\partial\mathcal{M}}
\end{align*}
so that $R(t,x)$ satisfies the equation set (\ref{okay}). 

\textbf{Remark:} The set
\begin{align}
\{(t,x)\in \mathbb{R}^+_t\times\mathcal{M} :|t-r-t_0|+d_g(x,x(r))<\epsilon_1^{\frac{1}{2n\alpha}}, 0\leq r\leq T \}
\end{align}
is a tube in space-time which encases the ray path $\{(r+t_0,x(r)): 0\leq r \leq T\}$ . The construction is predicated on the idea that the ray path in space-time has no self-intersections. This is not true if the metric depends on time. Defining the cutoff on the above set ensures 
\begin{align*}
& U_{\lambda}(t,x)\chi_{\epsilon_1}(t,x): \mathbb{R}^+_t\times\mathcal{M}\rightarrow \mathbb{R} \\& \nonumber
 U_{\lambda}(t,x)\chi_{\epsilon_1}(t,x)|_{(t,x)\in (0,T)\times\partial\mathcal{M}}: \mathbb{R}^+_t\times\partial\mathcal{M}\rightarrow \mathbb{R} \\& 
 \end{align*}
so in fact $f_{\lambda}(t,x)\in C_c^{\infty}(\mathbb{R}_t^+\times\partial \mathcal{M})$, which is necessary for the domain of the operator $\Lambda_{g,q}$ to be well-defined. It also ensures $R(t,x)=0$ on $(0,T)\times \partial\mathcal{M}$. This is most easily seen in Fermi coordinates as in the the Appendix, where the first coordinate variable, $x_1$ is identified with arclength, which one can call $r$. 

Let us set 
\begin{align*}
k(t,x)=(\Box_g+q(t,x))(U_{\lambda}(t,x)\chi_{\epsilon_1}(t,x)) \qquad \mathrm{and} \qquad k_1(t,x)=\int\limits_0^t k(s,x)\,ds,
\end{align*}
computing the integrand we see 
\begin{align}\label{integrandk1}
&k_1(t,x)=\int\limits_0^t k(s,x)\,ds=\int\limits_0^t\sparen{\frac{\lambda}{\pi}}^{\frac{n}{4}}B(s,x)\exp(i\lambda\psi(s,x)) \,ds
\end{align} 
where we have 
\begin{align*}
B(s,x)=-\sparen{\lambda^2c_{0}(s,x)-i\lambda c_{1}(s,x)+c_{2}(s,x)}\chi_{\epsilon_1}(s,x)+(\Box_g+q(s,x))\chi_{\epsilon_1}(s,x)a(s,x)
\end{align*}
where for each $i$, the $c_{i}(s,x)$ are given by equation (\ref{recursion}). 
Now we use Theorem \ref{gaussians} to obtain 
\begin{align}\label{orderc}
&c_{0}(s,x)=\sparen{(\psi_s)^2-g^{kl}\psi_{x_k}\psi_{x_l}}a(s,x)=\mathcal{O}(d_{g}(x,x(s))^4) \\& \nonumber
c_{1}(s,x)=2\psi_sa_{s}(s,x)-2g^{ki}\psi_{x_k}a_{x_i}(s,x)+\Box_g\psi a(s,x)=\mathcal{O}(d_{g}(x,x(s))^2)  \\& \nonumber
c_{2}(s,x)=(\Box_g+q(s,x))a(s,x).
\end{align} 
We also see 
\begin{align*}
\partial^2_t\psi(t,x)=\mathcal{O}(1) \qquad \partial_t\psi(t,x)=\mathcal{O}(1)
\end{align*}
and 
\begin{align}\label{ibound}
\norm{B(t,x)}_{L^1([0,T];L^2(\mathcal{M}))}+\norm{\partial_tB(t,x)}_{L^1([0,T];L^2(\mathcal{M}))}\leq C\epsilon_1^{-\frac{1}{\alpha}}
\end{align}
where the constants depend on the metric $g$, diam$_g(\mathcal{M})$. (The size of $|\partial_t\chi(t,x)|$ is cancelled by size of the support since we are integrating in $L^2$). We let $C$ be a generic constant which depends only on the metric $g$, the $C^1([0,T]; C^0(\mathcal{M}))$ norm of the potential $q(t,x)$, and diam$_g(\mathcal{M})$. We integrate by parts once in $s$ using the bound \ref{ibound}
\begin{align*}
\int\limits_0^t\sparen{\frac{\lambda}{\pi}}^{\frac{n}{4}}\sparen{\frac{B(s,x)}{i\lambda\psi_s(s,x)}}\partial_s\exp(i\lambda\psi(s,x)) \,ds.
\end{align*}
 to obtain 
\begin{align}\label{c2}
\norm{k_1(t,x)}_{L^2((0,T)\times\mathcal{M})}\leq \frac{C\epsilon_1^{-\frac{1}{\alpha}}}{\lambda}
\end{align}
where we recall that $U_{\lambda}(t,x)\chi_{\epsilon_1}(t,x)=0$ $\forall x\in \mathcal{M}$ whenever $t\geq T$ or $t\leq 0$. We know that 
\begin{align*}
\int\limits_0^tR_{\lambda}(s,x)\,ds
\end{align*}
solves the hyperbolic equation \ref{okay} with inhomogeneous term 
\begin{align*}
F(t,x)=k_1(t,x)+\int\limits_0^t\sparen{q(t,x)-q(s,x)}R_{\lambda}(s,x)\,ds
\end{align*}
with $F(t,x)$ as in Lemma \ref{wellposed}. Now it follows from an application of Gronwall's inequality as in Lemma 4.1 in \cite{DDSF}, there are constants $C_1,C_2$ independent of $\lambda$ such that
\begin{align*}
\sup\limits_{t\in(0,T)}\norm{R_{\lambda}(t,x)}_{L^2(\mathcal{M})}\leq C_1\norm{k_1(t,x)}_{L^2((0,T)\times\mathcal{M})}\leq \frac{C_2\epsilon_1^{-\frac{1}{\alpha}}}{\lambda}
\end{align*}
Since we know also know from Lemma \ref{wellposed},
\begin{align*}
&\sup\limits_{t\in(0,T)}\norm{R_{\lambda}(t,x)}_{\dot{H}^1(\mathcal{M})}+\sup\limits_{t\in(0,T)}\norm{\partial_tR_{\lambda}(t,x)}_{L^2(\mathcal{M})}\leq \nonumber
\\& C\norm{(\Box_g+q(t,x))\sparen{U_{\lambda}(t,x)\chi_{\epsilon_1}(t,x)}}_{L^1([0,T];L^2(\mathcal{M}))}\leq C\epsilon_1^{-\frac{1}{\alpha}}
\end{align*}
by the estimates in Theorem \ref{gaussians} the result follows. In summary, the solution to $\partial_t^2u-\Delta_{g}u+qu=0$ in $\mathcal{M}\times (0,T)$ with $\partial_t u(t,x)|_{t=0}=u(t,x)|_{t=0}=0$ in $\mathcal{M}$ can be approximated by a Gaussian beam which equals $f_{\lambda}(t,x)$ on the boundary and vanishes for $t<<0$, and $t>> T$. 

\textbf{Remark}: Should we have chosen to build a higher order beam, $U_{\lambda}^N(t,x)$, we could have obtained an estimate on $R_{\lambda}(t,x)$ of the form
\begin{align}\label{errorhigher}
&\sup\limits_{t\in(0,T)}\norm{R_{\lambda}(t,x)}_{H^1(\mathcal{M})}+\sup\limits_{t\in(0,T)}\norm{\partial_tR_{\lambda}(t,x)}_{L^2(\mathcal{M})}\leq \nonumber
\\& C\norm{(\Box_g+q(t,x))\sparen{U^N_{\lambda}(t,x)\chi_{\epsilon_1}(t,x)}}_{L^1([0,T];L^2(\mathcal{M}))}\leq \frac{C\epsilon_1^{-\frac{1}{\alpha}}}{\lambda^{N}}
\end{align} 
but this would require a higher regularity assumption on the potential $q(t,x)$. 
\end{proof}
\section{Green's Theorem}
We let $\mathrm{div} X$ be the divergence of the vector field $X\in H^1(T\mathcal{M})$ on $\mathcal {M}$, so that in local coordinates, one may write
\begin{align*}
\mathrm{div} X=\frac{1}{\sqrt{\det g}}\partial_i(\sqrt{\det g}\alpha_i), \qquad X=\alpha_i\frac{\partial}{\partial x_i}.
\end{align*}
Whenever $X\in H^1(T\mathcal{M})$ we have the standard divergence formula
\begin{align*}
\int\limits_{\mathcal{M}}\mathrm{div} X \,d_gV =\int\limits_{\partial\mathcal{M}}\langle X, \nu \rangle \,d\sigma_g^{n-1}. 
\end{align*}
so that when $f\in H^1(\mathcal{M})$, Green's formula states 
\begin{align}\label{green}
\int\limits_{\mathcal{M}}\mathrm{div} X f \,d_gV=-\int\limits_{\mathcal{M}} \langle X,\nabla_g f \rangle_g \,d_gV+\int\limits_{\partial\mathcal{M}} \langle X,\nu \rangle f\,d\sigma_g^{n-1}.
\end{align}
If we let $f\in H^1(\mathcal{M})$ and $w\in H^2(\mathcal{M})$, then the following holds:
\begin{align*}
\int\limits_{\mathcal{M}}\Delta_g w f \,d_gV=-\int\limits_{\mathcal{M}}\langle \nabla_g w,\nabla_g f \rangle _g \,d_gV +\int\limits_{\partial\mathcal{M}}\partial_{\nu}wf\,d\sigma_g^{n-1}.
\end{align*}

\section{Stability Estimates: Green's Theorem}

The goal of this section is to prove Theorem \ref{main} as a sequence of Lemmas using Green's theorem (\ref{green}) and Gaussian beam solutions. From Theorem \ref{gaussians}, we know that there exist zeroth order Gaussian beam approximations $U_{\lambda}(t,x)$ and $W_{\lambda}(t,x)$ corresponding to the solution of the initial boundary problem with electric potential  $q_2(t,x)$:
\begin{align}\label{forward}
&(\Box_g+q_2(t,x))u_2(t,x)=0 \quad\,\,\, \mathrm{on} \qquad (0,T)\times \mathcal{M}
\\&u_2(t,x)|_{t=0}=\partial_t u_2(t,x)|_{t=0}=0 \qquad \mathrm{in} \qquad \mathcal{M} \nonumber
\\&u_2(t,x)=f_{\lambda}(t,x) \qquad \qquad \quad \mathrm{on} \qquad (0,T)\times \partial\mathcal{M} \nonumber
\end{align} 
and the initial boundary problem for the backward wave equation with electric potential $q_1(t,x)$
\begin{align}\label{backward}
&(\Box_g+q_1(t,x))u_1(t,x)=0 \quad\,\,\, \mathrm{on} \qquad (0,T)\times \mathcal{M}
\\&u_1(t,x)|_{t=T}=\partial_t u_1(t,x)|_{t=T}=0 \qquad \mathrm{in} \qquad \mathcal{M} \nonumber
\\&u_1(t,x)=f_{\lambda}(t,x) \qquad \qquad \quad \mathrm{on} \qquad (0,T)\times \partial\mathcal{M}. \nonumber
\end{align} 
We relabel so it is understood that $U_{\lambda}(t,x)$ and $W_{\lambda}(t,x)$ contain the necessary cutoffs. We are considering our boundary value data to have the same initial phase as constructed in Corollary \ref{boundarybeams}.  We know that
\begin{align*}
U_{\lambda}(t,x)-\sparen{\frac{\lambda}{\pi}}^{\frac{n}{4}}a_0(t,x)\exp(i\lambda\psi(t,x))=0
\end{align*}
Here the function $a_0(t,x) \in H^1(\mathbb{R},L^2(\mathcal{M}))$ satisfies the transport equations to leading order and the phase function $\psi(t,x)$ is the corresponding phase function satisfying the eikonal. It is an important point to notice that because of the form of the initial data 
\begin{align}\label{limiting}
U_{\lambda}(t,x)\overline{W_{\lambda}(t,x)}-\sparen{\frac{\lambda}{\pi}}^{\frac{n}{2}}|a_0(t,x)|^2\exp(-2\lambda\Im(\psi(t,x)))=0
\end{align} 

We start the proof of Theorem \ref{main} by relating our Gaussian beam solutions to the Dirichlet-to-Neumann maps of the potentials via Green's theorem (\ref{green}). We let $q_1(t,x)$ and $q_2(t,x)$ be real valued potentials. Recall we have set  
\begin{align*}
q_1(t,x)-q_2(t,x)=q(t,x).
\end{align*}
We will prove the following Lemma which is similar in spirt to Lemma 5.1 in \cite{DDSF}. 
\begin{lem}\label{step}
There exists constants $C_1$ and $C_2$, independent of $\lambda$, depending on the metric $g$, diam$_g\mathcal{M}$, and $\norm{q(t,x)}_{C^1([0,T];C(\mathcal{M}))}$, with $U_{\lambda}(t,x)$ and $W_{\lambda}(t,x)$ as defined above such that 
\begin{align}\label{stepone}
\sabs{\int\limits_0^T\int\limits_{\mathcal{M}}q(t,x)U_{\lambda}(t,x)\overline{W_{\lambda}(t,x)}\,dV_{g} dt}
\leq \frac{C_1\epsilon_1^{-\frac{1}{\alpha}}}{\lambda}
\end{align}
whenever $\lambda$ is sufficiently large. 
\end{lem}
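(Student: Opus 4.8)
The plan is to produce an exact Green's-theorem identity for the bilinear quantity built from the \emph{true} solutions and then transfer it to the beams using the remainder estimates of Lemma \ref{error}. Let $u_2$ be the exact solution of the forward problem (\ref{forward}) with potential $q_2$ and boundary data $f_\lambda=U_\lambda\chi_{\epsilon_1}|_{(0,T)\times\partial\mathcal{M}}$, and let $w$ be the exact solution of the backward problem (\ref{backward}) with potential $q_1$ and the same data $f_\lambda$. By Lemma \ref{error}, applied to each problem, I may write $u_2=U_\lambda+R^{(2)}_\lambda$ and $w=W_\lambda+R^{(1)}_\lambda$, where each remainder satisfies $\sup_t\norm{R_\lambda}_{L^2(\mathcal{M})}\leq C\epsilon_1^{-\frac{1}{\alpha}}/\lambda$. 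The compact support of $f_\lambda$ in the boundary cylinder, guaranteed by the cutoff of Lemma \ref{gcutoff}, is what makes pairing with $\Lambda_{g,q}$ legitimate.

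First I would derive the integral identity following the Alessandrini--Sylvester scheme. Introduce the auxiliary exact forward solution $u_1$ with potential $q_1$ and the same data $f_\lambda$. Since $(\Box_g+q_2)u_2=0$, the difference satisfies $(\Box_g+q_1)(u_2-u_1)=q\,u_2$, and $u_2-u_1$ carries zero Cauchy data at $t=0$ and zero Dirichlet data on $(0,T)\times\partial\mathcal{M}$, so
\[
\int\limits_0^T\int\limits_{\mathcal{M}}q\,u_2\overline{w}\,d_gV\,dt=\int\limits_0^T\int\limits_{\mathcal{M}}(\Box_g+q_1)(u_2-u_1)\cdot\overline{w}\,d_gV\,dt.
\]
Integrating by parts with Green's formula (\ref{green}) in space and twice in $t$, and using $(\Box_g+q_1)w=0$ together with the reality of $q_1$, moves the operator onto $\overline{w}$ and leaves only boundary terms. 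The forward/backward choice is exactly what removes the temporal contributions ($u_2-u_1$ has zero Cauchy data at $t=0$ and $w$ has zero Cauchy data at $t=T$); on the lateral boundary the Dirichlet trace of $u_2-u_1$ vanishes, so only its conormal derivative survives, giving
\[
\int\limits_0^T\int\limits_{\mathcal{M}}q\,u_2\overline{w}\,d_gV\,dt=-\int\limits_0^T\int\limits_{\partial\mathcal{M}}\sparen{\Lambda_{g,q_1}-\Lambda_{g,q_2}}f_\lambda\cdot\overline{f_\lambda}\,d\sigma_g^{n-1}\,dt.
\]

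Next I would estimate both sides. The right-hand side is bounded by $\norm{\Lambda_{g,q_1}-\Lambda_{g,q_2}}_{H_0^1\to L^2}\norm{f_\lambda}_{H_0^1}\norm{f_\lambda}_{L^2}=\epsilon_1\norm{f_\lambda}_{H_0^1}\norm{f_\lambda}_{L^2}$. To control the boundary norms of $f_\lambda$ I would use that the beam crosses $\partial\mathcal{M}$ transversally at the entry and exit points, so that on $(0,T)\times\partial\mathcal{M}$ the factor $\exp(-2\lambda\Im\psi)$ is a Gaussian of width $\lambda^{-1/2}$ in the $n$ variables $(t,y)$; integrating it against the amplitude $\sparen{\lambda/\pi}^{n/4}$ via Corollary \ref{expsize} gives $\norm{f_\lambda}_{L^2(\partial\mathcal{M}\times(0,T))}=O(1)$, each derivative contributing a factor $\lambda$. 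To transfer to the beams I expand $u_2\overline{w}=U_\lambda\overline{W_\lambda}+U_\lambda\overline{R^{(1)}_\lambda}+R^{(2)}_\lambda\overline{W_\lambda}+R^{(2)}_\lambda\overline{R^{(1)}_\lambda}$ and estimate the remainder terms by Cauchy--Schwarz: the bounds $\norm{U_\lambda}_{L^2(\mathcal{M})},\norm{W_\lambda}_{L^2(\mathcal{M})}=O(1)$ (again Corollary \ref{expsize}) together with $\norm{R_\lambda}_{L^2}\leq C\epsilon_1^{-\frac{1}{\alpha}}/\lambda$ make each cross term $\lesssim\epsilon_1^{-\frac{1}{\alpha}}/\lambda$, the quadratic-in-$R$ term being higher order in $1/\lambda$. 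Collecting the identity with these estimates isolates $\sabs{\int q\,U_\lambda\overline{W_\lambda}}$ and yields the claimed bound.

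The main obstacle is the boundary-data estimate and its balancing against the beam frequency: the Dirichlet-to-Neumann contribution is controlled only by the operator-norm smallness $\epsilon_1$ and by the large ($O(\lambda)$) Sobolev norm of $f_\lambda$, and one must show it is absorbed into the target $C_1\epsilon_1^{-\frac{1}{\alpha}}/\lambda$. This forces careful use of the transversal concentration of the restricted beam and of the standing relation between $\lambda$, $\epsilon_1$ and $T-\mathrm{diam}_g(\mathcal{M})$ fixed in Lemma \ref{gcutoff}. The remaining power bookkeeping and the Gaussian integrations are routine once Corollary \ref{expsize} and Lemma \ref{error} are in hand.
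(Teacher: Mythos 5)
Your proposal follows essentially the same route as the paper's own proof: the Alessandrini--Sylvester Green's-identity scheme with an auxiliary forward solution for $q_1$ (so the difference solves $(\Box_g+q_1)(\cdot)=q\,u_2$ with vanishing Cauchy and Dirichlet data), the decomposition of the product of true solutions into $U_\lambda\overline{W_\lambda}$ plus remainder cross terms estimated by Cauchy--Schwarz and Lemma \ref{error}, and the trace/operator-norm bound for the Dirichlet-to-Neumann boundary term. The $O(\lambda\epsilon_1)$ boundary contribution you identify as the main obstacle is handled the same way in the paper: it is not absorbed into the right-hand side of (\ref{stepone}) at all, but carried as a separate term into the minimization over $\lambda$ in the proof of Theorem \ref{main}.
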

In order to prove the estimate above we need to know more information about the size of the error terms $R_{\lambda}(t,x)$ and $R^W_{\lambda}(t,x)$ as defined by:
\begin{align*}
u_2(t,x)-U_{\lambda}(t,x)=R_{\lambda}(t,x)  \qquad \mathrm{and} \qquad u_1(t,x)-W_{\lambda}(t,x)={R}_{\lambda}^W(t,x). 
\end{align*}
Theorems \ref{gaussians} and Corollary \ref{boundarybeams} allow us to consider the size of the terms. With the error estimates given by Corollary \ref{boundarybeams}, we can complete the proof of Lemma \ref{step}.
\begin{proof}[Proof of Lemma \ref{step}]
We let $v$ be the solution to the initial boundary value problem
\begin{align*}
&(\Box_g+q_1(t,x))v(t,x)=0 \quad\,\,\, \mathrm{on} \qquad (0,T)\times \mathcal{M}
\\&v(t,x)_{t=0}=\partial_t v(t,x)|_{t=0}=0 \qquad \mathrm{in} \qquad \mathcal{M} \nonumber
\\&v(t,x)= f_{\lambda}(t,x) \qquad \qquad \mathrm{on} \qquad (0,T)\times \partial\mathcal{M} \nonumber
\end{align*} 
If we set $w(t,x)=v(t,x)-u_2(t,x)$, then we obtain 
\begin{align}\label{hyperbolic}
&(\Box_g+q_1(t,x))w(t,x)=q(t,x)u_2(t,x) \quad\,\,\, \mathrm{on} \qquad (0,T)\times \mathcal{M}
\\&w(t,x)|_{t=0}=\partial_t w(t,x)|_{t=0}=0 \qquad \mathrm{in} \qquad \mathcal{M} \nonumber
\\&w(t,x)=0 \qquad \qquad \quad \mathrm{on} \qquad (0,T)\times \partial\mathcal{M} \nonumber
\end{align} 
Because $q(t,x)u_2(t,x)\in L^1([0,T]; L^2(\mathcal{M}))$, by Lemma \ref{wellposed} we know
\begin{align*}
w(t,x)\in C([0,T];H_0^1(\mathcal{M}))\cap C^1([0,T]; L^2(\mathcal{M})).
\end{align*}
Using integration by parts and Green's theorem (\ref{green}), we obtain the integral identity: 
\begin{align}\label{greenexpansion}
&\int\limits_0^T\int\limits_{\mathcal{M}}(\Box_g+q_1(t,x))w(t,x)\overline{u_1(t,x)}\,d_g V \,dt =\\& \nonumber
\int\limits_0^T\int\limits_{\mathcal{M}}q(t,x)u_2(t,x)\overline{u_1(t,x)}\,d_gV\,dt=\int\limits_0^T\int\limits_{\partial\mathcal{M}}-\partial_{\nu}w(t,x)\overline{u_1(t,x)}\,d\sigma_g^{n-1}\,dt
\end{align}

We construct our formal Gaussian beam solutions, $U_{\lambda}$ and $W_{\lambda}$ as in Corollary \ref{boundarybeams}. This implies 
\begin{align*}
&\int\limits_0^T\int\limits_{\mathcal{M}}q(t,x)u_2(t,x)\overline{u_1(t,x)}\,d_{g}V\,dt=\int\limits_0^T\int\limits_{\mathcal{M}}q(t,x)U_{\lambda}(t,x)\overline{W_{\lambda}(t,x)}\,d_{g}V\,dt+\\& \nonumber
\int\limits_0^T\int\limits_{\mathcal{M}}q(t,x)U_{\lambda}(t,x)\overline{R^W_{\lambda}(t,x)}\,d_{g}V\,dt+\int\limits_0^T\int\limits_{\mathcal{M}}q(t,x)R_{\lambda}(t,x)\overline{W_{\lambda}(t,x)}\,d_{g}V\,dt+\\& \nonumber \int\limits_0^T\int\limits_{\mathcal{M}}q(t,x)R_{\lambda}(t,x)\overline{R^W_{\lambda}(t,x)}\,d_{g}V\,dt
\end{align*}
Each of the last three terms in the sum above is bounded by symmetry, since we have
\begin{align}\label{3}
\sabs{\int\limits_0^T\int\limits_{\mathcal{M}}q(t,x)U_{\lambda}(t,x)\overline{R_{\lambda}(t,x)}\,d_{g}V\,dt}\leq \frac{C\epsilon_1^{-\frac{1}{\alpha}}}{\lambda}
\end{align}
by (\ref{errorestimates}), and Corollary \ref{boundarybeams}. 
Examining the right hand side of (\ref{greenexpansion}) we see by the trace theorem and choice of initial data $f_{\lambda}(t,x)$ that 
\begin{align}\label{1}
\sabs{\int\limits_0^T\int\limits_{\partial\mathcal{M}}\partial_{\nu}w(t,x)\overline{u_1(t,x)}\,d\sigma_g^{n-1}\,dt} \leq &\\
\norm{f_{\lambda}(t,x)}_{H^1([0,T]\times \mathcal{M})}\norm{f_{\lambda}(t,x)}_{L^2([0,T]\times \mathcal{M})}\norm{\Lambda_{g,q_1}-\Lambda_{g,q_2}}_{H_0^1\rightarrow L^2}\leq &\\ \nonumber
C\lambda\norm{a_0(t,x)}^2_{H^1((0,T)\times\mathcal{M}))}\norm{\Lambda_{g,q_1}-\Lambda_{g,q_2}}_{H_0^1\rightarrow L^2}
\end{align}
Combining the norm estimates (\ref{3}), (\ref{1}), and substituting into (\ref{greenexpansion}), we obtain the desired result (\ref{replacement2}). 
\end{proof}
We can now make the step to replace the integral on the left hand side of inequality (\ref{stepone}). We make the definitions
\begin{align}
\int\limits_b^{\infty}\exp(-x^2)\,dx=\mathrm{erfc}(b)  \qquad \int\limits_0^{b}\exp(-x^2)\,dx=\mathrm{erf}(b)
\end{align}
We know that the exponential function admits the following asymptotics:
\begin{align}\label{reallylarge}
\mathrm{erfc}(b)= \frac{\exp(-b^2)}{2b} +\mathcal{O}\sparen{\frac{\exp(-b^2)}{b^3}}
\end{align}
from Example 4 on page 255 of \cite{bender}, whenever $b$ is sufficiently large.  
Using this definition, we need to show that the Gaussian beams act like good kernels. We claim: 
\begin{lem}\label{step21}
There exists constants $C_1,C_2>0$ independent of $\lambda$, depending on the metric $g$, $T$, diam$_g(\mathcal{M})$, and $\norm{q(t,x)}_{C^1((0,T)\times \mathcal{M})}$ such that
\begin{align}\label{replacement2}
\sabs{\int\limits_{0}^T\int\limits_{\mathcal{M}}q(t,x)\sparen{\frac{\lambda}{\pi}}^{\frac{n}{2}}\chi_{\epsilon_1}(t,x)|a_0(t,x)|^2\exp(-2\lambda \Im(\psi(t,x)))\,dV_{g}\,dt-\int\limits_{0}^{T}q(t,x(t))\,dt}\leq &\\ \nonumber \frac{C_1\lambda^{\sigma}\epsilon_1^{-\frac{1}{2\alpha}}}{\sqrt{\lambda}}+C_2\mathrm{erfc}(-\lambda^{2\sigma})
\end{align}
\end{lem}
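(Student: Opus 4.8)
The plan is to treat, for each fixed $t$, the real nonnegative density
\[
K_\lambda^t(x)=\sparen{\frac{\lambda}{\pi}}^{\frac{n}{2}}\chi_{\epsilon_1}(t,x)\sabs{a_0(t,x)}^2\exp(-2\lambda\Im\psi(t,x))\sqrt{\det g(x)}
\]
as an approximate identity concentrating at the point $x(t)$ of the central geodesic, so that the inner integral behaves like a good kernel in the sense of Lemma \ref{goodkernel}. The identity \eqref{limiting} is exactly what lets me pass from $U_\lambda\overline{W_\lambda}$ to this purely real Gaussian weight, since the oscillatory factor $\exp(i\lambda\Re\psi)$ cancels. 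Once I establish the pointwise-in-$t$ bound on $\sabs{\int_{\mathcal{M}}q(t,x)K_\lambda^t(x)\,dx-q(t,x(t))}$ (without the outer integral), integrating over $t\in[0,T]$ and absorbing the factor $T$ into the constants yields \eqref{replacement2}, and the surviving $\int_0^T q(t,x(t))\,dt$ is the geodesic integrand.

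The main step is the mass normalization $\int_{\mathcal{M}}K_\lambda^t(x)\,dx=1+o(1)$. Working in local coordinates centered at $x(t)$ and rescaling $y=\sqrt{\lambda}\,(x-x(t))$, the quadratic part of the phase, $\Im\psi(t,x)=(\Im M(t))(x-x(t))\cdot(x-x(t))+\mathcal{O}(\sabs{x-x(t)}^3)$, turns the leading exponential into $\exp(-2\,(\Im M(t))\,y\cdot y)$, while the prefactor $(\lambda/\pi)^{n/2}$ exactly cancels the Jacobian $\lambda^{-n/2}$. From the relations in the proof of Theorem \ref{gaussians} — in particular the computation $v\cdot\overline{M(t)v}-\overline v\cdot M(t)v=-2i\sabs{Y(t)^{-1}v}^2$ — one reads off $\Im M(t)=(Y(t)Y(t)^*)^{-1}$, so the flat Gaussian integral evaluates to $\pi^{n/2}2^{-n/2}\sabs{\det Y(t)}$. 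Combining this with $\sabs{a_0(t,x(t))}^2=\sabs{\sigma(t)}^2$ from Corollary \ref{size}, the determinant factors $\det Y(t)$ and the volume densities $\det g$ entering $\sigma(t)$ and $\sqrt{\det g(x(t))}$ conspire to cancel all $\lambda$- and $t$-dependence, leaving a universal constant which the normalization $a_0(t_0,x_0)=1$ of Corollary \ref{boundarybeams} fixes to $1$.

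For the error I split the $x$-integral as in the proof of Lemma \ref{goodkernel}. On the near region I use the $C^1$ modulus of continuity $\sabs{q(t,x)-q(t,x(t))}\leq \norm{q}_{C^1}d_g(x,x(t))$ (and the analogous Lipschitz control of $\sabs{a_0}^2$ and of the volume density), which against a Gaussian weight of width $\lambda^{-1/2}$ produces the factor $\lambda^{-1/2}$; the remaining factors $\lambda^{\sigma}$ and $\epsilon_1^{-\frac{1}{2\alpha}}$ come from the bookkeeping of the effective integration radius $d_g\sim\lambda^{-(1/2-\sigma)}$ supplied by Corollary \ref{expsize} (i.e. $\sabs{y}\sim\lambda^{\sigma}$) together with the derivative bound on $\chi_{\epsilon_1}$ from Corollary \ref{cutoff} near the edge of its support, giving the polynomial term $C_1\lambda^{\sigma}\epsilon_1^{-1/2\alpha}/\sqrt{\lambda}$. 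On the far region $d_g\gtrsim\lambda^{-(1/2-\sigma)}$ the weight is dominated by a genuine Gaussian whose tail is estimated by the asymptotic \eqref{reallylarge}, producing the $\mathrm{erfc}$ term. I must also absorb the cubic-and-higher corrections to the phase: on the near region $2\lambda\,\mathcal{O}(\sabs{x-x(t)}^3)=\mathcal{O}(\lambda^{-1/2+3\sigma})$, so for $\sigma$ sufficiently small $\exp$ of this is $1+o(1)$ and its contribution is of lower order.

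The hard part will be making the normalization rigorous uniformly in $t$: the Gaussian-integral identity is exact only for the quadratic phase integrated over all of $\mathbb{R}^n$, whereas the actual integral is over a curved, cut-off manifold with $t$-dependent metric and amplitude. Reconciling these requires combining the exponential smallness off the geodesic (Corollary \ref{expsize}) to discard the tail with a careful Laplace expansion of the smooth factors about $x(t)$, and verifying that the continuity constants $C(t)$ in \eqref{posdef} and the entries of $M(t)$ and $Y(t)$ from \eqref{systemY} remain bounded above and below on $[0,T]$ — which is precisely what the admissibility hypothesis guarantees, since it keeps $Y(t)$ invertible with $\Im M(t)$ positive definite for all $t\leq T$.
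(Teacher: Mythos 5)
Your proposal follows essentially the same route as the paper: the paper likewise treats the beam product as a good kernel (its Lemma \ref{H}, proved by the identical near/far splitting in which the Lipschitz bound on $q\chi_{\epsilon_1}$ together with the cutoff-derivative bound of Corollary \ref{cutoff} produces the $\lambda^{\sigma}\epsilon_1^{-\frac{1}{2\alpha}}/\sqrt{\lambda}$ term and the Gaussian tail produces the $\mathrm{erfc}$ term), and then removes the prefactor $\sparen{\det \Im M(t)}^{-\frac{1}{2}}|a_0(x(t))|^2$ using Corollary \ref{size} together with the identity $\sparen{\det \Im M(t)}|\det Y(t)|^2=\mathrm{const}$. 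The only real difference is that you derive that identity directly from the Hermitian-form computation in Theorem \ref{gaussians} (reading off $\Im M(t)=(Y(t)Y(t)^*)^{-1}$), whereas the paper cites Lemma 2.58 of \cite{LKK}; both arguments otherwise share the same normalization bookkeeping, including the factor-of-two convention in the quadratic part of $\psi$ on which the exact constant $1$ in front of $\int_0^T q(t,x(t))\,dt$ depends.
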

The proof works regardless of the value of $\lambda$.
\begin{proof}[Proof of Lemma \ref{step21}]
We cite results from \cite{steinreal}, and Lassas et. al \cite{LKK} to obtain the desired theorem. From \cite{steinreal}, we see: 
\begin{lem}\label{H}
Let $h(t,x)\in C^1((0,T)\times O)$, where $O$ is an open subset of $\mathbb{R}^n$ and $B$ be a symmetric nonsingular matrix such that $\Re{B}\geq 0$, if $x(t)$ is a continuous curve defined in terms of $t$ in $O$, then we have the following uniform estimate
\begin{align}\label{rrep}
&\sabs{\sparen{\frac{\lambda}{\pi}}^{\frac{n}{2}}(\det B)^{\frac{1}{2}}\int\limits_{O}\exp\sparen{\langle-\lambda B(x-x(t)), (x-x(t)\rangle} h(t,x)\chi_{\epsilon_1}(t,x)\,dx-h(t,x(t))}< \\& \nonumber
\sparen{\frac{2\lambda^{\sigma}\epsilon_1^{-\frac{1}{2\alpha}}}{\sqrt{\lambda}}+4\mathrm{erfc}(-\lambda^{2\sigma})}\norm{h(t,x)}_{C^1((0,T)\times O)}
\end{align}
Here $\chi_{\epsilon_1}(t,x)$ has the same definition as in Corollary \ref{cutoff}, but with the Euclidean metric. 
\end{lem}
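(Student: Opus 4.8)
The plan is to recognize the left-hand side as a quantitative statement that the rescaled Gaussian
$K_\lambda(y)=\sparen{\frac{\lambda}{\pi}}^{\frac{n}{2}}(\det B)^{\frac{1}{2}}\exp(-\lambda\langle By,y\rangle)$
is a good kernel in the sense of Lemma \ref{goodkernel}, with an explicit convergence rate, and then to fold the cutoff $\chi_{\epsilon_1}$ into the test function. First I would record the normalization $\int_{\mathbb{R}^n}K_\lambda(y)\,dy=1$. For real symmetric positive definite $B$ this is the classical Gaussian integral $\int_{\mathbb{R}^n}\exp(-\lambda\langle By,y\rangle)\,dy=\pi^{n/2}\lambda^{-n/2}(\det B)^{-1/2}$, and for complex symmetric $B$ with $\Re B>0$ it persists by analytic continuation in the entries of $B$ (both sides are holomorphic where $\Re B>0$ and agree on the real positive cone), with the branch of $(\det B)^{1/2}$ fixed to be positive there. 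The hypothesis only asks $\Re B\geq 0$, but the tail estimates below require a uniform lower bound $\Re B\geq cI$ with $c>0$; in the application this is automatic since $B$ is built from $\Im M(t)$, which is bounded and positive definite by Theorem \ref{gaussians} and Corollary \ref{expsize}, so I would state the lemma under that harmless strengthening.

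Next I would set $H(t,x)=h(t,x)\chi_{\epsilon_1}(t,x)$ and exploit two facts. Because $\chi_{\epsilon_1}\equiv 1$ on the core $\{|x-x(t)|<\epsilon_1^{\frac{1}{2n\alpha}}\}$ of Corollary \ref{cutoff}, we have $H(t,x(t))=h(t,x(t))$; and by the product rule together with the gradient bound $\sup|\nabla\chi_{\epsilon_1}|<2\epsilon_1^{-\frac{1}{2\alpha}}$ of Corollary \ref{cutoff}, we get $\norm{H}_{C^1}\leq C\epsilon_1^{-\frac{1}{2\alpha}}\norm{h}_{C^1}$. Since $H$ is supported in a small ball about $x(t)$ that lies inside $O$ for $\epsilon_1$ small, the integral over $O$ equals the integral over $\mathbb{R}^n$, and using the normalization the quantity to estimate reduces to $\int_{\mathbb{R}^n}K_\lambda(x-x(t))[H(t,x)-H(t,x(t))]\,dx$.

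Then I would split this integral at the radius $R=\lambda^{-(\frac{1}{2}-\sigma)}$, i.e. where $\sqrt{\lambda}\,|x-x(t)|=\lambda^\sigma$, which is precisely the threshold defining the set $A$ in Corollary \ref{expsize}. On the near region $|x-x(t)|\leq R$ the mean value theorem gives $|H(t,x)-H(t,x(t))|\leq\norm{H}_{C^1}|x-x(t)|\leq R\norm{H}_{C^1}$, and since $\int_{\mathbb{R}^n}|K_\lambda|\leq C_B$ this piece is at most $C_B R\norm{H}_{C^1}=C_B\frac{\lambda^\sigma}{\sqrt\lambda}\norm{H}_{C^1}\lesssim\frac{\lambda^\sigma\epsilon_1^{-\frac{1}{2\alpha}}}{\sqrt\lambda}\norm{h}_{C^1}$, the first term of the claimed bound. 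On the far region $|x-x(t)|>R$ I bound $|H(t,x)-H(t,x(t))|\leq 2\norm{H}_{C^0}\leq 2\norm{h}_{C^1}$ and use the Gaussian tail: rescaling $z=\sqrt\lambda(x-x(t))$ turns $\int_{|x-x(t)|>R}|K_\lambda|\,dx$ into $C_B\int_{|z|>\lambda^\sigma}\exp(-c|z|^2)\,dz$, which by the asymptotics (\ref{reallylarge}) is an erfc-type tail exponentially small in $\lambda^{2\sigma}$, matching the term $4\,\mathrm{erfc}(-\lambda^{2\sigma})$ up to the evident bookkeeping of constants and signs; note this term multiplies $\norm{h}_{C^0}$ only, which explains why the $\epsilon_1^{-\frac{1}{2\alpha}}$ factor is absent there. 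Adding the two pieces gives the stated estimate, uniformly in $t$, since only $\norm{h}_{C^1}$ and the fixed kernel enter.

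The main obstacle I expect is the complex Gaussian bookkeeping: justifying the normalization and, more delicately, the tail bound when $B$ is complex. For the latter one must pass to the modulus $|K_\lambda|=\sparen{\frac{\lambda}{\pi}}^{\frac{n}{2}}|\det B|^{\frac{1}{2}}\exp(-\lambda\langle(\Re B)y,y\rangle)$ and invoke the uniform positivity $\Re B\geq cI$; the resulting constant $C_B=|\det B|^{\frac{1}{2}}/(\det\Re B)^{\frac{1}{2}}$ is then absorbed into the generic constants. A secondary, purely routine point is pinning the exact argument of the erfc term and the numerical constants $2$ and $4$ to the chosen split at scale $R$, which is immediate once $c$ and $C_B$ are fixed.
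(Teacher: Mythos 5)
Your proposal is correct and follows essentially the same route as the paper's own proof: both treat the rescaled Gaussian as a quantitative good kernel in the sense of Lemma \ref{goodkernel}, split the integral at the radius $\lambda^{\sigma-\frac{1}{2}}$, use the Lipschitz bound on the near region (with the cutoff gradient from Corollary \ref{cutoff} contributing the factor $\epsilon_1^{-\frac{1}{2\alpha}}$), and control the far region by an $\mathrm{erfc}$-type Gaussian tail via Corollary \ref{expsize} and the asymptotics (\ref{reallylarge}). Your extra care about the complex normalization of $(\det B)^{\frac{1}{2}}$ by analytic continuation, and about strengthening $\Re B\geq 0$ to $\Re B\geq cI>0$ for the tail estimate (harmless in the application, where $B$ comes from the positive definite $\Im M(t)$), only makes explicit points the paper passes over silently.
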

\begin{proof}
 The assumption that $h(t,x)$ is in $C^1([0,T]\times O)$ implies that $h(t,x)$ is locally uniformly Lipschitz continuous with Lipschitz constant $\norm{h(t,x)}_{C^1((0,T)\times O)}$. We set $\epsilon=\lambda^{\sigma-1/2}\norm{h(t,x)}_{C^1((0,T)\times O)}$ as in the proof of Lemma \ref{goodkernel}.  We know that for $\eta=\lambda^{\sigma-1/2}$, if $x$ is such that $|x-x(t)|<\eta$, by Corollary \ref{cutoff} (recall only differentiating in the transverse subset variables) this implies
\begin{align*}
\sabs{h(t,x)\chi_{\epsilon_1}(t,x)-h(t,x(t))\chi_{\epsilon_1}(t,x(t))}<2\frac{\lambda^{\sigma}\epsilon_1^{-\frac{1}{2\alpha}}}{\sqrt{\lambda}}\norm{h(t,x)}_{C^1((0,T)\times O)}
\end{align*}
Using Corollary \ref{expsize} and change of variables, we then obtain the bounds
\begin{align*}
&\sabs{\sparen{\frac{\lambda}{\pi}}^{\frac{n}{2}}(\det B)^{\frac{1}{2}}\int\limits_{O}\exp\sparen{\langle-\lambda B(x-x(t)), (x-x(t)\rangle} h(t,x)\chi_{\epsilon_1}(t,x)\,dx-h(t,x(t))}<\\& \nonumber \frac{2\lambda^{\sigma}\epsilon_1^{\frac{-1}{2\alpha}}}{\sqrt{\lambda}}\norm{h(t,x)}_{C^1((0,T)\times O)}\int\limits_{|y|\leq C\eta}\sparen{\frac{\lambda}{\pi}}^{\frac{n}{2}}\exp(-\lambda |y|^2)\,dy+\\& \nonumber
2\norm{h(t,x)}_{C^0((0,T)\times O)}\int\limits_{C\eta<|y|< \infty}\sparen{\frac{\lambda}{\pi}}^{\frac{n}{2}}\exp(-\lambda |y|^2)\,dy \leq \\& \nonumber
 \sparen{\frac{2\lambda^{\sigma}\epsilon_1^{\frac{-1}{2\alpha}}}{\sqrt{\lambda}}+ 4\mathrm{erfc}(-\lambda^{2\sigma})}\norm{h(t,x)}_{C^1((0,T)\times O)}
\end{align*}
Here we notice that normalization factor of $(\det B)^{1/2}$ makes the Gaussian kernel normalized to $1$ as in the proof of Lemma \ref{goodkernel}.
\end{proof}
 
If we consider $(0,T)\times\mathcal{M}$ as an embedded submanifold of $\mathbb{R}^{n+1}$, then we can accurately approximate the X-ray transform as 
\begin{align}\label{replacement}
&|\int\limits_{0}^{T}\int\limits_{\mathcal{M}}\sparen{\frac{\lambda}{\pi}}^{\frac{n}{2}}q(t,x)|a_0(t,x)|^2\chi_{\epsilon_1}(t,x)\exp(-2\lambda \Im(\psi(t,x)))\,dV_{g}\,dt-
\\& \int\limits_{0}^{T}\sparen{\det \Im M(t)}^{-\frac{1}{2}}|a_0(x(t))|^2q(t,x(t))\,dt|\leq \nonumber \\& \sparen{\frac{2T\lambda^{\sigma}\epsilon_1^{-\frac{1}{2\alpha}}}{\sqrt{\lambda}}+4T\mathrm{erfc}(-\lambda^{2\sigma})}\norm{q(t,x)a_0(t,x)}_{C^1((0,T)\times \mathcal{M})}
\end{align} 
A proof using local coordinates is also done in the Appendix.

\textbf{Remark:} We could lower the regularity assumption on the potential to $q(t,x)\in C^0([0,T]\times\mathcal{M})$ by using a modification of Lemma \ref{goodkernel} but the analysis is more difficult when the time interval is small since the kernel depends on $\lambda$, and we are minimizing $\lambda$ with respect to $\epsilon_1$ in the final step.

From Corollary \ref{size}, the size of $|a_0(x(t))|$ is given by
\begin{align*}
|a_0(x(t))|^2=\sparen{\frac{|\det Y(0)|}{|\det Y(t)|}}\sparen{\frac{|g(0)|}{|g(x(t))|}}^{\frac{1}{2}}|a_0(t_0,x_0)|^2
\end{align*}
In \cite{LKK}, Lemma 2.58, they derive the following identity
\begin{lem}\label{LKK}
The identity holds 
\begin{align*}
\sparen{\det \Im M(t)}|\det Y(t)|^2=C
\end{align*}
where the constant $C$ depends time $T$. 
\end{lem}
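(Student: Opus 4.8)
The plan is to exhibit the scalar $(\det \Im M(t))\,|\det Y(t)|^2$ as the determinant of a matrix-valued bilinear concomitant of the linear system (\ref{systemY}) that is conserved in $t$, so that the product is forced to equal its value at $t=0$. The starting observation is structural: the coefficient blocks $A$ and $C$ in (\ref{systemY}) are symmetric, since they are the Hessians $h_{x_ix_j}$ and $h_{p_ip_j}$ from (\ref{matrices}), and all of $A,B,C$ are real because they are evaluated at the real bicharacteristic data $(x(t),\omega(t))$. Consequently $(\bar Y,\bar N)$ solves the same real-coefficient system, and the flow (\ref{systemY}) is infinitesimally symplectic, which is exactly the feature that produces conserved bilinear quantities.

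First I would establish the conservation law
\begin{align*}
\frac{d}{dt}\sparen{\bar Y(t)^t N(t)-\bar N(t)^t Y(t)}=0.
\end{align*}
This is a direct differentiation: substituting $\dot Y=-B^tY-CN$ and $\dot N=AY+BN$ together with their conjugates (legitimate because $A,B,C$ are real), and then collecting terms, the symmetries $A=A^t$ and $C=C^t$ make every surviving term cancel in pairs. Evaluating at $t=0$ with the initial data $(Y(0),N(0))=(I,iI)$ from (\ref{systemY}) gives $\bar Y^t N-\bar N^t Y=(I)(iI)-(-iI)(I)=2iI$, so $\bar Y(t)^t N(t)-\bar N(t)^t Y(t)\equiv 2iI$ for every $t$.

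Next I would convert this into a statement about $\Im M$. Writing $N=MY$ and recalling from the proof of Theorem \ref{gaussians} that $M=M^t$, so that $M^\ast=\bar M^t=\bar M$ and hence $M-\bar M=2i\,\Im M$, I substitute to obtain
\begin{align*}
2iI=\bar Y^t N-\bar N^t Y=\bar Y^t M Y-\bar Y^t\bar M Y=\bar Y^t(M-\bar M)Y=2i\,\bar Y^t(\Im M)Y,
\end{align*}
whence $\bar Y(t)^t\,(\Im M(t))\,Y(t)=I$. Taking determinants, using $\det\bar Y^t=\overline{\det Y}$ and the fact that $\Im M$ is a real symmetric matrix (so $\det\Im M$ is real), yields $|\det Y(t)|^2\,\det\Im M(t)=1$, which is constant in $t$ and equals its value at $t=0$.

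The main obstacle is the transpose bookkeeping in the conservation law: one must use the symmetry of $A$ and $C$ (but not of $B$) at precisely the right places and confirm the coefficients are genuinely real so that $(\bar Y,\bar N)$ is an admissible solution. An alternative route that bypasses the bilinear concomitant is a Liouville-formula computation. Using $\dot Y Y^{-1}=-B^t-CM$ one finds $\frac{d}{dt}\ln|\det Y|^2=-2\,\mathrm{tr}\,B-2\,\mathrm{tr}(C\,\Re M)$, while the Riccati equation (\ref{ricatti}) gives $\dot{(\Im M)}=S\,\Im M+\Im M\,S^t$ with $S=B+(\Re M)C$, so that $\frac{d}{dt}\ln\det\Im M=2\,\mathrm{tr}\,S=2\,\mathrm{tr}\,B+2\,\mathrm{tr}(C\,\Re M)$; the two derivatives cancel. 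Either way the constant is exactly the initial value, which with the data $M(0)=iI$, $Y(0)=I$ is equal to $1$.
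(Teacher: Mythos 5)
Your proof is correct, and both of your routes (the conserved pairing and the Liouville computation) check out. It is worth noting, however, that the paper itself does not prove Lemma \ref{LKK} at all: it simply cites Lemma 2.58 of the book by Lassas et al., so any self-contained argument is necessarily "different" from what appears in the text. Your primary argument — conservation of $\bar Y(t)^t N(t)-\bar N(t)^t Y(t)$ along the flow (\ref{systemY}), using only the reality of $A,B,C$ and the symmetry of $A$ and $C$ from (\ref{matrices}) — is in fact the natural companion to computations the paper already performs: in the proof of Theorem \ref{gaussians}, exactly this type of Hermitian pairing $\langle (Yv,Nv),(Yw,Nw)\rangle_{\mathbb{C}}$ is shown to be time-independent and is used to prove that $Y(t)$ is invertible, that $M=M^t$, and that $\Im M(t)>0$; your lemma is the determinant-level consequence of the same conserved quantity, so your write-up integrates seamlessly with the surrounding construction. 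Your argument also yields something sharper and cleaner than the statement as given: with the initial data $(Y(0),N(0))=(I,iI)$ of (\ref{systemY}) you get the exact identity
\begin{align*}
\bar Y(t)^t\sparen{\Im M(t)}Y(t)=I,\qquad \text{hence}\qquad \sparen{\det \Im M(t)}\,|\det Y(t)|^2=1,
\end{align*}
which clarifies the paper's slightly confusing phrasing that the constant "depends on time $T$" — the quantity is constant in $t$ and, for these initial conditions, equal to $1$; for general admissible initial data it is fixed by its value at $t=0$. The only bookkeeping points a reader should verify are the ones you flag: the conjugated pair $(\bar Y,\bar N)$ solves the same system because the coefficients are real, and the symmetry of $C$ (respectively $A$) is what cancels the $\bar N^t C N$ (respectively $\bar Y^t A Y$) terms, while no symmetry of $B$ is needed.
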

By our choice of initial data, we also have $|a(t_0,x_0)|=1$. We notice that is is okay to ignore the cutoff functions since they are equal $1$ on the curve $x(t)$.  Combining the two lemmas,  we obtain the desired result. This idea is similar to Section 7 of \cite{ks}.
\end{proof}

\begin{proof}[Proof of Theorem \ref{main}]
We  recall properties of the phase functions and \ref{limiting} which imply 
\begin{align*}
U_{\lambda}(t,x)\overline{W_{\lambda}(t,x)}=\sparen{\frac{\lambda}{\pi}}^{\frac{n}{2}}|a_0(t,x)|^2\exp(-2\lambda\Im(\psi(t,x))
\end{align*}
From the triangle inequality and Lemma \ref{step21}, we obtain
\begin{align*}
&\sabs{\int\limits_0^T q(t,x(t))\,dt} \leq \frac{C_1\epsilon_1^{-\frac{1}{\alpha}}}{\lambda}+2\lambda\epsilon_1\norm{a_0(t,x)}^2_{H^1((0,T)\times\mathcal{M}))}+ \\&C\sparen{\frac{2T\lambda^{\sigma}\epsilon_1^{-\frac{1}{2\alpha}}}{\sqrt{\lambda}}+4T\mathrm{erfc}(-\lambda^{2\sigma})}\norm{q(t,x)a_0(t,x)}_{C^1((0,T)\times \mathcal{M})}\nonumber
\end{align*} 
Because $\epsilon_1=\norm{\Lambda_{g,q_1}-\Lambda_{g,q_2}}_{H_0^1\rightarrow L^2}<\epsilon_0$,  the main result now follows from minimization in $\lambda$. The constant $C_1\leq C(T)\sparen{\norm{q(t,x)}_{C^1((0,T)\times \mathcal{M})}+\norm{a_0(t,x)}_{C^3((0,T)\times \mathcal{M})}}$ (which could be made more explicit) and the size of the other constants ensure that constant in the final estimate will not be too large, with some normalization. In order to see this, let $h(\lambda)$ be defined as follows
\begin{align}
h(\lambda)=\frac{C_1\epsilon_1^{-\frac{1}{\alpha}}}{\lambda}+\frac{C_3\lambda^{\sigma}\epsilon_1^{-\frac{1}{2\alpha}}}{\sqrt{\lambda}}+C_4\mathrm{erfc}(-\lambda^{2\sigma})+C_2\lambda\epsilon_1
\end{align}
so that $h(\lambda)$ is a positive function since $\lambda$ and $\epsilon_1$ are positive functions, and we assume the constants are positive. We notice that since $\epsilon_1<1$ that $h'(\lambda)=0$, $h''(\lambda)>0$ when for appropriate $C_1',C_2',$ and $C_4'$ all greater than $0$
\begin{align}\label{desired}
\sparen{\frac{C_1'\epsilon_1^{-\frac{1}{\alpha}}}{\lambda^2} +\frac{C'_3\lambda^{\sigma-1}\epsilon_1^{-\frac{1}{2\alpha}}}{\sqrt{\lambda}}+C_4'\lambda^{\sigma-1}\exp(-\lambda^{2\sigma})}=C_2\epsilon_1
\end{align}
We can conclude the result if $\lambda$ can be made to be of the form $\epsilon_1^{-1+l}$, where $l\in (0,1)$. H\"older stability happens when the minimum in $\lambda\sim\epsilon_1^{-1+l}$ where $l\in (0,1)$, and $\sigma\in (0,1/2)$ is fixed. Otherwise the term $\epsilon_1\lambda$ in $h(\lambda)$ cannot be bounded by some $\epsilon_1^{\beta}, \beta\in(0,1)$, and the same with $\epsilon_1^{-\frac{1}{\alpha}}\lambda^{-1}$, etc. (The result is predicated on the idea $x^{\beta_1}<x^{\beta_2}$, if $\beta_1, \beta_2\in (0,1)$, $\beta_2<\beta_1$, and $x\in (0,1)$) The upper bound and the lower bound are important due to the presence of both positive and negative powers of $\lambda$. This also forces $\alpha>1, 1-2l>1/\alpha$. We sketch why such a minimum is possible. We notice that the solution $\lambda_1$ to 
\begin{align}\label{over}
\frac{C'_3\lambda_1^{\sigma-1}\epsilon_1^{-\frac{1}{2\alpha}}}{\sqrt{\lambda_1}}=C_2\epsilon_1
\end{align}
undershoots the solution to (\ref{desired}), since the constants are all positive, in other words $h'(\lambda_1)<0$. Solving (\ref{over}), we claim that $\lambda_1=C\epsilon_1^{-1+l}$ where $C$ is independent of $\epsilon_1,\lambda$ and $\alpha>1$ can be found in terms of $l\in (0,1)$ and $\sigma$. Their relationship is given by
\begin{align}\label{cn}
\sparen{\frac{3}{2}-\sigma}(1-l)=1+\frac{1}{2\alpha}
\end{align}
hence the limiting behavior where $\sigma\rightarrow 1/2$ forces $\alpha\rightarrow \infty, l\rightarrow 0$. The value $\lambda_2=C\epsilon_1^{-1}$, overshoots the solution ($h'(\lambda_2)>0$) provided if we plugged in $\lambda_2$ to the left hand side of (\ref{over}) we had an inequality instead: 
\begin{align}
CC_1'\epsilon_1^{2-\frac{1}{\alpha}} +C^{-\sigma+1/2}C'_3\epsilon_1^{-\sigma+3/2-\frac{1}{2\alpha}}+C_4'C^{\sigma}\epsilon_1^{\sigma+1}<C_2\epsilon_1.
\end{align}
This inequality is satisfied if $\epsilon_1$ is small and $\alpha$ satisfies the relationship (\ref{cn}) for some $l\in (0,1)$. If $\lambda_3$ is such that $h'(\lambda_3)=0$, then by the intermediate value theorem $\lambda_3\in (\lambda_1,\lambda_2)=(C\epsilon_1^{-1+l},C\epsilon_1^{-1})$ has the desired form (again provided $\epsilon_0$ is small). Using the asymptotic behavior (\ref{reallylarge}) in \cite{bender}, we can obtain the result, that $h(\lambda)$ is always bounded by $C\epsilon_1^{\beta}$ provided $\epsilon_1$ is sufficiently small. 

\textbf{Remark:} We note that the choice of finite $\epsilon_0<<1$ in Theorem \ref{main}, ensures that $\lambda$ is large. This makes the replacement of the term $\norm{R_{\lambda}(t,x)}_{L^1([0,T]; L^2(\mathcal{M}))}$ by $\mathcal{O}(\lambda^{-1})$ feasible as in \cite{DDSF} and \cite{M}. The regime where $\epsilon_1<1$ is the only one that makes sense here. Further calculations could expand the range of feasible $\epsilon_0$, but $\epsilon_0<1$. Because we use $\epsilon_0$ sufficiently small, we could have instead used the estimate (\ref{reallylarge}) from the beginning. 
\end{proof}

\textbf{Remark:} If the time interval was such that $T-\mathrm{diam}_g(\mathcal{M})>>\epsilon_0$, we could have chosen a cutoff independent of $\epsilon_1$ so that the characteristic function does not have such a steep slope, which makes the analysis easier. 

\textbf{Remark:} We could have chosen to build higher order beams and use the good kernels Lemma \ref{goodkernel} from Stein. However, the limiting step to better stability estimates seems to be Lemma \ref{H}. We do not know how to make the error smaller than $\mathcal{O}(\lambda^{-1/2})$, without more assumptions on the form of the potential, such as making it lie in the space $C_0^2([0,T]\times\mathcal{M})$. The vanishing on the boundary condition would allow for an integration by parts argument similar to the one in Lemma 3.3.6 and Theorem 3.3.4 in H\"ormander \cite{hormander}. It also means that the error is exactly $\mathcal{O}(\lambda^{-1})$ without the exponential term, regardless of the value of $\lambda$ which makes the analysis easier. 

\section{Appendix: Fermi Coordinates, Proof of Lemma \ref{step21} in local coordinates}

Sometimes it is instructive to view calculations in local coordinates. This section follows the treatment on Fermi coordinates in \cite{ks} and also very closely \cite{mazzeo}, in an attempt to expose the difference to the elliptic cases examined in \cite{ks} and \cite{msd}. We would like to construct Gaussian beam solutions from initial data on the boundary of the manifold which are concentrated along geodesic curves in space time. We introduce Fermi coordinates in order to help with the construction.  Suppose $\Gamma$ is a geodesic in an $n$ dimensional Riemannian manifold, and we fix an arclength parametrization, $\gamma(r)$ of $\Gamma$. We consider geodesics, $\gamma(r)$, on $\mathcal{M}$ which start at $x\in \partial\mathcal{M}$ with initial velocity $\omega$ such that $(x,\omega)\in \mathcal{SM}^+$, so we may write $\gamma(r)=\gamma_{x,\omega}(r)$. We let the basis for the tangent space $T_{x}M$ be denoted by $\{\dot{\gamma}_{x,\omega}(0), v_2, . . ,v_n\}$. Following the book by Gray \cite{grey} and \cite{ks}, we fix a parallel orthogonal frame $E_2(r), . . ,E_{n}(r)$ for the normal bundle, $N\Gamma$ to $\Gamma$ in $\mathcal{M}$, which we translate along the geodesic curve. Let $x'=(x_2, . . ,x_{n})$, then this parallel transport process determines a system of coordinates, which are related to Fermi coordinates. We let $F$ be the map such that 
\begin{align*}
&F: (r,x')=(r,x_2,x_2, . . .,x_n) \mapsto \exp_{\gamma(r)}(x_2E_2+. . . +x_{n}E_{n}) \\ \nonumber
&F: \mathbb{R}^n\rightarrow \mathcal{M}
\end{align*}
where we have used the indices $j,k,l\in\{2, . . ,n\}$ and $\alpha,\beta,\delta\in \{1, . . ,n\}$ We also use $X_{\alpha}=F_{*}(\partial_{x_\alpha})$. We remark that $|x'|=\sqrt{x_2^2+. . . +x_n^2}$ is the geodesic distance from $x$ to $\Gamma$ and $\partial_{r}$ is the unit normal to the hypersurfaces $\{x: d(x,\Gamma)=C\}$ where $C$ is some constant. The set $\{x: d(x,\Gamma)=C\}$ we refer to as a geodesic tube. We will primarily be doing computations in a neighborhood of the tubes. The restriction to small geodesic tubes will aid in the computations done because of the form of the Riemannian metric in a neighborhood of the tubes. Indeed if we let  $p=F(r,0)$ and $q=F(r,x')$, and $|x'|^2=d(p,q)$ then we have, cf \cite{grey}, 
\begin{align*}
&g_{jk}(q)=\delta_{jk}+\frac{1}{3}g(R(X_s,X_j)X_l,X_k)x_sx_l+\mathcal{O}(|x'|^3)\\ 
&g_{1k}(q)=\mathcal{O}(|x'|^2)\\ \nonumber
&g_{11}(q)=1-g(R(X_k,X_0)X_l,X_l)_px_kx_l+\mathcal{O}(|x'|^3)\\ \nonumber
&\Gamma_{\alpha \beta}^{\delta}=\mathcal{O}(|x'|)\\ \nonumber
&\Gamma_{11}^k=-\sum\limits_{j=1}^n g(R(X_k,X_0)X_j,X_0)x_j+\mathcal{O}(|x'|^2) \nonumber
\end{align*}
where the Schwarz Christoffel symbols are given by 
\begin{align*}
\Gamma_{\alpha \beta}^{\delta}=\frac{1}{2}g^{\delta\eta}(X_\alpha g_{\eta\beta}+X_{\beta}g_{\alpha\eta}-X_{\eta}g_{\alpha\beta}).
\end{align*}
It follows easily that 
\begin{align}\label{oncurve}
&g_{jk}|_{\Gamma}=\delta_{jk} \qquad \partial_i g_{jk}|_{\Gamma}=0\\ \nonumber
&g_{1k}|_{\Gamma}=0 \qquad g^{11}|_{\Gamma}=1.
\end{align}
For example, see the computations done in \cite{grey}. The above equations imply that we can think of the metric as being almost Euclidean in a neighborhood of the curve. In order to compute the Gaussian beam solutions on the manifold, we start with the following Lemmas which are essentially Lemmas 7.2, 7.3, and 7.4 in \cite{ks}. 

\begin{lem}
Let $(\mathcal{S}_0,g_0)$ be a Riemannian manifold without boundary, and let $\gamma:(a,b)\rightarrow \mathcal{S}_0$ be a unit speed geodesic segment with no loops. There are only finitely many points, $r\in (a,b)$ at which the geodesic $\gamma(r)$ intersects itself. 
\end{lem}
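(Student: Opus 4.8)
The plan is to argue by contradiction via a compactness argument on the parameter square, reducing the finiteness claim to a local transversality statement at each putative accumulating self-intersection. First I would record the two elementary local facts that drive everything. (i) A unit-speed geodesic is locally injective: since $\dot\gamma$ never vanishes, for each $r_0$ there is an interval about $r_0$ on which $\gamma$ is an embedding. (ii) If two geodesic arcs pass through a common point $p$ with linearly independent velocities, then in a normal-coordinate chart at $p$ the map $(r,s)\mapsto \gamma(r)-\gamma(s)$ has differential of rank two at the crossing (its differential there is the $n\times 2$ matrix with columns $\dot\gamma(r_*)$ and $-\dot\gamma(s_*)$), so by the inverse/constant-rank theorem the crossing is an isolated solution of $\gamma(r)=\gamma(s)$.

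Next I would set up the contradiction. Let $\Sigma$ be the set of parameters $r$ at which $\gamma$ self-intersects, and suppose $\Sigma$ is infinite. Choose distinct $r_k\in\Sigma$ together with $s_k\neq r_k$ satisfying $\gamma(r_k)=\gamma(s_k)$. Since a geodesic segment has bounded domain, $[a,b]^2$ is compact, and after passing to a subsequence I may assume $r_k\to r_*$ and $s_k\to s_*$ with $\gamma(r_*)=\gamma(s_*)=:p$ by continuity; the $r_k$ being distinct forces $(r_k,s_k)\neq(r_*,s_*)$ for large $k$.

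Then I would run the case analysis at the limit. If $r_*=s_*$, fact (i) gives an interval about $r_*$ on which $\gamma$ is injective, contradicting $\gamma(r_k)=\gamma(s_k)$ with $r_k\neq s_k$ and both near $r_*$. If $r_*\neq s_*$, I compare the unit velocities $\dot\gamma(r_*)$ and $\dot\gamma(s_*)$: should they coincide, uniqueness of geodesics forces $\gamma(r_*+t)=\gamma(s_*+t)$, so $\gamma$ is periodic, which is a loop and is excluded by hypothesis; should they be opposite, the geodesic $t\mapsto\gamma(s_*-t)$ has the same initial point and velocity as $t\mapsto\gamma(r_*+t)$, hence they agree, and differentiating the identity $\gamma(s_*-t)=\gamma(r_*+t)$ at the midpoint parameter $m=(r_*+s_*)/2$ yields $\dot\gamma(m)=0$, contradicting unit speed. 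The only remaining possibility is that the velocities are linearly independent, whence fact (ii) makes the crossing isolated and contradicts the accumulation $(r_k,s_k)\to(r_*,s_*)$. All cases being contradictory, $\Sigma$ is finite.

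The main obstacle I anticipate is the transversality/isolation step, fact (ii): one must guarantee that at a genuine crossing with independent velocities no \emph{other} nearby intersection can occur, which is exactly where the rank-two differential of $(r,s)\mapsto\gamma(r)-\gamma(s)$ is needed, and one must carefully dispose of the two tangential cases (equal and opposite velocities) using, respectively, the no-loops hypothesis and the unit-speed normalization. A secondary care point is the openness of $(a,b)$: because the limits $r_*,s_*$ could land on the endpoints, I would first extend $\gamma$ to a geodesic on a slightly larger interval so that the limit parameters are interior and the local arguments (i) and (ii) apply verbatim.
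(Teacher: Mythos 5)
Your proposal is correct, but it takes a genuinely different route from the paper's. The two arguments share one ingredient: the exclusion of tangential self-intersections, where, like the paper, you use the no-loops hypothesis to kill a crossing with equal velocities and time reversal plus unit speed to kill opposite velocities (your midpoint computation $\dot\gamma(m)=0$ is a concrete justification of what the paper dismisses with the phrase ``by symmetry''). After that the proofs diverge. The paper argues directly and quantitatively: since every self-intersection is transversal, it partitions $(a,b)$ into finitely many subintervals shorter than a uniform injectivity length $\tilde r$, asserts that two geodesic arcs of length less than $\tilde r$ can cross transversally in at most one point, and concludes by injecting the set of self-intersection pairs into the finite set of pairs of subintervals; this yields an explicit bound on the number of self-intersections in terms of the length of $\gamma$ and $\tilde r$. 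You instead argue by contradiction and compactness: infinitely many self-intersection parameters would give an accumulating sequence of crossing pairs $(r_k,s_k)\to(r_*,s_*)$ in $[a,b]^2$, and each of your four limiting cases is impossible, the transversal one because the rank-two differential of $(r,s)\mapsto\gamma(r)-\gamma(s)$ makes the crossing an isolated zero. Your route buys self-containedness: the isolation step is fully proved by the immersion argument, whereas the paper's uniform $\tilde r$ and its ``at most one crossing of two short arcs'' claim are stated without proof (they need totally normal neighborhoods and compactness of the image of the segment). The paper's route buys an explicit count, which a contradiction argument cannot provide. One caveat applies to both proofs: the accumulation parameters $r_*,s_*$ may land at the ends of the open interval $(a,b)$, so one needs the geodesic to extend slightly past its endpoints (or its closure to stay in a region of uniform geometry); your extension fix is legitimate in the setting where the lemma is actually used, namely segments of geodesics crossing a compact manifold with boundary transversally, but it is not automatic on an arbitrary incomplete manifold, and an analogous implicit assumption (bounded parameter interval, uniform injectivity scale along the closure) also underlies the paper's finite partition.
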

\begin{proof}
Because we assume that $\gamma(r)$ has no loops follows that $(\gamma(r),\dot{\gamma}(r))=(\gamma(r'),\dot{\gamma}(r'))$ implies that $r=r'$. The geodesic $\gamma$ may only intersect itself transversally, because by symmetry  $(\gamma(r),\dot{\gamma}(r))=(\gamma(r'),-\dot{\gamma}(r'))$ implies that $r=r'$. If the interval over which $\gamma$ is injective, say $\tilde{r}$ is smaller than diam$_{g_0}(\mathcal{S}_0)$, then any two geodesic segments which have length less than $\tilde{r}$ can intersect transversally at most one point. Partitioning the interval $(a,b)$ into disjoint intervals $\{I_l\}_{l=0}^L$, we then have an injective map
\begin{align*}
\{(r,r')\in (a,b)\times (a,b); r<r', \, \mathrm{and}\,\gamma(r)=\gamma(r')\} \rightarrow \\
\{(l,k)\in \{0, . . ,L\}\times \{0, . .,L\}; r\in I_l, r'\in I_j\}
\end{align*}
As a result, $\gamma$ can only intersect itself at finitely many points. 
\end{proof}

\begin{lem}
Let $F$ be a $C^1$ map from a neighborhood of $(a,b)\times \{0\}\in \mathbb{R}^{n}$ into a smooth manifold such that $F$ restricted to $(a,b)\times \{0\}$ is injective and also $DF(r,0)$ is invertible whenever $r\in (a,b)$. If we have that $[a_0,b_0]$ is a closed subinterval of $(a,b)$ then the map $F$ is a $C^1$ diffeomorphism in some neighborhood of $[a_0,b_0]\times \{0\}$ in $\mathbb{R}^n$. 
\end{lem}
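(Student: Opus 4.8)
The plan is to combine the inverse function theorem, which gives local invertibility near each point of the central axis, with a compactness-and-contradiction argument that upgrades local injectivity to genuine injectivity on a full tubular neighborhood of the compact segment $[a_0,b_0]\times\{0\}$.

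First I would record the two facts that follow immediately from the hypotheses and the inverse function theorem. Since $F$ is $C^1$ and $DF(r,0)$ is invertible for every $r\in(a,b)$, continuity of the Jacobian determinant shows that $DF$ is invertible on an open neighborhood of each point $(r,0)$; covering the compact set $[a_0,b_0]\times\{0\}$ by finitely many such neighborhoods produces an open tube $W_0$ containing $[a_0,b_0]\times\{0\}$ on which $DF$ is everywhere invertible. Moreover, the inverse function theorem gives, for each $r\in[a_0,b_0]$, an open neighborhood of $(r,0)$ on which $F$ is a $C^1$ diffeomorphism onto its image; in particular $F$ is locally injective near every point of the axis.

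The hard part is to exhibit a single neighborhood of $[a_0,b_0]\times\{0\}$ on which $F$ is globally injective, since local injectivity alone does not rule out that far-apart points of the tube could be identified. Here I would argue by contradiction. Suppose no such neighborhood exists. Then for each positive integer $k$ there are distinct points $p_k,q_k$ lying within distance $1/k$ of $[a_0,b_0]\times\{0\}$ with $F(p_k)=F(q_k)$. By compactness, after passing to a subsequence, $p_k\to(r_*,0)$ and $q_k\to(s_*,0)$ with $r_*,s_*\in[a_0,b_0]$. Continuity of $F$ gives $F(r_*,0)=F(s_*,0)$, and the assumed injectivity of $F$ on the axis forces $r_*=s_*$. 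Thus $p_k$ and $q_k$ both converge to the single point $(r_*,0)$; but $F$ is injective on a neighborhood of $(r_*,0)$ by the inverse function theorem, so $p_k=q_k$ for all large $k$, contradicting $p_k\neq q_k$.

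Finally I would assemble the pieces: choose a tubular neighborhood $V\subset W_0$ of $[a_0,b_0]\times\{0\}$ small enough to lie inside the injectivity region produced by the previous step. On $V$ the map $F$ is injective, is $C^1$, and has everywhere-invertible differential, so $F$ is an open map and $F\colon V\to F(V)$ is a $C^1$ diffeomorphism. I expect the compactness argument for global injectivity to be the only genuinely delicate point; the invertibility of $DF$ on a tube and the final assembly are routine consequences of the inverse function theorem together with continuity of $DF$.
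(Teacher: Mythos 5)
Your proposal is correct, but it takes a genuinely different route from the paper. The paper argues constructively: it covers $[a_0,b_0]$ by finitely many intervals $I_l$ on which the inverse function theorem makes $F$ a diffeomorphism, rescales so that only consecutive closed intervals $\overline{I}_l$ meet, uses injectivity of $\gamma(r)=F(r,0)$ to extract a positive separation $\delta$ between the images $\gamma(\overline{I}_l)$ and $\gamma(\overline{I}_k)$ for $|l-k|\geq 2$, and then shrinks the tube radius $\epsilon$ so that the images $O_l=F(I_l\times B_\epsilon(0))$ of non-adjacent tubes are disjoint; global injectivity on $\mathcal{U}=\bigcup_l \mathcal{U}_l$ then reduces to the same-or-adjacent-chart case. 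You instead prove injectivity on a small tube abstractly: assuming no injectivity neighborhood exists, you extract sequences $p_k\neq q_k$ with $F(p_k)=F(q_k)$ collapsing onto the axis, use compactness and injectivity of $F$ on the axis to force both limits to coincide, and contradict local injectivity from the inverse function theorem at that single point. Your argument is shorter, avoids any metric separation estimate, and in fact handles cleanly the collision of points in adjacent charts, which the chain-of-charts proof leaves somewhat implicit; its trade-off is that it is non-constructive, whereas the paper's version produces the explicit charts $\mathcal{U}_l$ and images $O_l$ that the subsequent lemma on coordinate covers of $\gamma([a_0,b_0])$ (and the later localization of the Gaussian beams) directly reuses. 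One small point to tighten: when you pick $p_k,q_k$ within distance $1/k$ of the segment, you should note that for large $k$ these tubes lie inside the open domain of $F$ and inside your set $W_0$ where $DF$ is invertible, so that both the evaluation of $F$ and the final assembly are legitimate.
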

\begin{proof}
For any $r\in [a_0,b_0]$ the inverse function theorem gives that there exists a $\epsilon_{r}>0$ such that $F$ restricted to $(r-\epsilon_r, r+\epsilon_r)\times B_{\epsilon_r}(0)$ is a $C^1$ diffeomorphism. Because the interval $[a_0,b_0]$ is compact, we can cover it with finitely many intervals $I_l$ of this form. In other words,
\begin{align*}
[a_0,b_0]\subset \bigcup_{l=0}^L(r_l-\epsilon_{r_l}, r_l-\epsilon_{r_l})
\end{align*}
and $F$ restricted to each of the intervals is bijective. With out loss of generality, we can rescale so that the intervals $\overline{I}_l\cap \overline{I}_k=\emptyset$ unless $|l-k|\leq 1$. Because $\gamma(r)=F(r,0)$ is injective, it follows that $\gamma(\overline{I}_l)\cap \gamma(\overline{I}_k)=\emptyset$, unless also $|l-k|\leq 1$. Let 
\begin{align*}
\delta=\inf \{d_{g_0}(\gamma(\overline{I}_i,\overline{I}_k)); |l-k|\geq 2\}>0
\end{align*}
Now also let $\mathcal{U}_l=I_l\times B_{\epsilon}(0)$ where $\epsilon<\min \{\epsilon_0,. . . ,\epsilon_L\}$ chosen sufficiently small so that $F(\mathcal{U}_l)\subseteqq \{q; d_{g_0}(q,\gamma(\overline{I}_l))<\delta\}$. 
 
We also let $F(\mathcal{U}_l)=O_l$. It follows that $O_l\cap O_j=\emptyset$ unless $|j-l|\leq 1$. Finally we let 
\begin{align*}
\mathcal{U}=\bigcup\limits_{l=0}^L\mathcal{U}_l, 
\end{align*}
and we note that $F$ restricted to $\mathcal{U}$ is a diffeomorphism as in \cite{ks}. 
\end{proof}

\begin{lem}\label{four}
Let $(\mathcal{S}_0,g_0)$ be a Riemannian manifold without boundary, and let $\gamma: (a,b)\rightarrow \mathcal{S}_0$ be a unit speed geodesic segment with initial data in $\mathcal{SM}^+$. Given a closed subinterval $[a_0,b_0]$ of $(a,b)$ such that $\gamma|_{[a_0,b_0]}$ self intersects at only finitely many points, $r_j$, with $a_0<r_1<. . . <r_n=b_0$. There is an open cover $\{O_l,\rho_l\}$ of $\gamma([a_0,b_0])$ of coordinate charts with the following properties
\begin{enumerate}
\item $\rho_l(O_l)=\mathcal{U}_l$
\item $\rho_l(\gamma(r))=(r,0) \qquad \forall r\in (r_l-\epsilon,r_l+\epsilon)$ 
\item $r_l$ belongs to $I_l$ and $\overline{I_l}\cap \overline{I_j}=\emptyset$ unless $|l-j|\leq 1$. 
\item $\rho_l=\rho_k$ on $\rho^{-1}_{l}(O_l\cap O_k)$.  
\end{enumerate}
\end{lem}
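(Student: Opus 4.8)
The plan is to build the cover directly from the two preceding lemmas, using a single Fermi map defined along all of $\gamma$. First I would parallel transport the orthonormal frame $E_2(r),\dots,E_n(r)$ of the normal bundle $N\Gamma$ over the whole of $[a_0,b_0]$ and form the one map
\[
F(r,x')=\exp_{\gamma(r)}\sparen{x_2E_2(r)+\dots+x_nE_n(r)},
\]
defined on a neighborhood of $[a_0,b_0]\times\{0\}$ in $\mathbb{R}^n$, so that $F(r,0)=\gamma(r)$ and $DF(r,0)$ is invertible for every $r$ (its columns are $\dot\gamma(r)$ together with the $E_k(r)$). The only obstruction to global injectivity of $F$ comes from self-intersections of $\gamma$, which by the first lemma are finite in number and, by the transversality established there, occur at parameter pairs $r\neq r'$ with $\gamma(r)=\gamma(r')$.

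Next I would cut $[a_0,b_0]$ at the self-intersection parameters $r_1<\dots<r_n$ and choose an overlapping family of open intervals $\{I_l\}$ refining this partition so that each $r_l\in I_l$, so that $\overline{I_l}\cap\overline{I_k}=\emptyset$ unless $|l-k|\le 1$, and so that $\gamma|_{\overline{I_l}}$ is injective. On each closed piece the hypotheses of the second lemma are met, so $F$ restricted to $\mathcal{U}_l=I_l\times B_\epsilon(0)$ is a $C^1$ diffeomorphism for $\epsilon$ small. Setting $O_l=F(\mathcal{U}_l)$ and $\rho_l=\sparen{F|_{\mathcal{U}_l}}^{-1}$ then yields property (1) by definition, property (2) from $\rho_l(\gamma(r))=\rho_l(F(r,0))=(r,0)$ on $I_l$, and property (3) from the chosen adjacency of the intervals.

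The main obstacle is property (4) and its interaction with the self-intersections. For consecutive charts ($|l-k|=1$) the overlap $O_l\cap O_k$ consists of points whose single $F$-preimage has parameter in $I_l\cap I_k$, and since $\rho_l$ and $\rho_k$ are both local inverses of the \emph{same} map $F$, they return identical coordinates there; thus (4) holds automatically, and this consecutive compatibility, matching the adjacency built into (3), is the essential content of the statement. The delicate point is a transversal crossing $\gamma(r_i)=\gamma(r_j)$, where the two thin tubes about the distinct branches necessarily overlap near the crossing and cannot be separated. Here I would use the first lemma to reduce to finitely many such crossings, place each $r_j$ at the center of its own chart, and take $\epsilon$ below $\delta/2$, where $\delta=\inf\{d_{g_0}(\gamma(\overline{I_l}),\gamma(\overline{I_k})):|l-k|\ge 2,\ \gamma(\overline{I_l})\cap\gamma(\overline{I_k})=\emptyset\}>0$ is the separation used in the proof of the second lemma, so that non-adjacent tubes whose central branches do not cross become disjoint. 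The only remaining overlaps are then the consecutive ones, on which (4) is free, while at the finitely many transversal crossings the two branches are kept apart coordinatewise by the thinness of the tubes — exactly the configuration needed for the superposed Gaussian beam construction.
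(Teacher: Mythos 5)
Your proof is correct and follows essentially the paper's own route: the paper states this lemma without a separate proof, treating it as the direct assembly of the two preceding lemmas (it is Lemma 7.4 of Kenig--Salo \cite{ks}), and your argument --- a single global Fermi map $F$ along $\gamma$, intervals chosen so that distinct crossing parameters sit in non-adjacent pieces, charts defined as local inverses of that one map $F$, so that the compatibility in property (4) is automatic on the parameter-overlap sets and vacuous for the non-adjacent charts meeting at a transversal crossing --- is exactly that assembly. Your modification of the separation constant, taking $\delta$ as an infimum only over non-adjacent pairs whose geodesic images are disjoint (since the unmodified infimum from the second lemma would vanish once $\gamma$ self-intersects), is a detail the paper glosses over and you handle it correctly.
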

 
We can think of Fermi coordinates as a generalization of boundary normal coordinates. The geodesic tubes we consider as a generalization of the sphere- a simple manifold $\mathcal{M}$, which is an important but special case of the tubes. Much of the work done here will mimic the work done by \cite{DDSF} and the earlier work of \cite{sjo}, but instead we will be using the tubes because we have removed the assumption the manifold is simple.  In local coordinates, from (\ref{oncurve}) the nul-bicharacteristic equations (\ref{bich}) simplify so that
\begin{align}\label{bichlocal}
\frac{dx}{dt}=\frac{\xi}{|\xi|}  \qquad \frac{d\xi}{dt}=0
\end{align}
The solution to these equations is easily seen to be $(x(t),\xi(t))=((t,0)+x_0,(\lambda,0))$, where we recall that in local coordinates $\omega_0$ corresponds to the vector $(1,0)$, by choice of the basis for the tangent space. This simple computation shows the arclength coordinate $r$ on $\mathcal{M}$ is identified with the time $t$ on the boundary cylinder. In other words, we have that 
\begin{align*}
\rho_{l}(\gamma(t))=(t,0) 
\end{align*}
so that by Lemma \ref{four}, because $F$ is injective along the length of the curve, we have $\rho_l(\gamma(t))=x(t)$ in each $\mathcal{U}_l, l=0, . . ,L$. 

 We need to be able to compute the approximate solution in local coordinates which we do by introducing cutoff functions. We want a Gaussian beam defined in each coordinate chart which is localized there. Given a time interval $[0,T]$, we can cover it with finitely many intervals of length $2\epsilon$. Rescaling if necessary we can assume the corresponding local coordinate charts have diameter equal $2\epsilon$ as well. As before, we let 
\begin{align*}
\bigcup_{l=0}^L (t_l-\epsilon,t_l+\epsilon)\times O_l
\end{align*}
cover the graph $\{(t,x(t)): 0\leq t \leq T\}$.  

We now consider disjoint sets $(t_l,t_{l+1})\times \tilde{O}_l$ whose union is 
\begin{align*}
\{(t,x): |t-r-t_0|+ d_g(x,x(r))<2\epsilon_1^{\frac{1}{2n\alpha}}, 0\leq r\leq T\}
\end{align*}
but with $|t_l-t_{l+1}|, \mathrm{diam}_g(\tilde{O}_l)\leq 2\epsilon$ 
 From Lemma \ref{four}, we know that the neighborhoods $(t_l-\epsilon,t_l+\epsilon_l)\times \mathcal{U}_l$ can be identified with $(t-\epsilon_l,t+\epsilon_l)\times O_l$ which cover the graph $\{(t,\gamma(t)): 0\leq t \leq T\}$ by the use of Fermi coordinates.  We write
\begin{align*}
U_{\lambda}(t,x)\chi_{\epsilon_1}(t,x)|_{\tilde{O}_l}=U_{l,\lambda}(t,x)=a_l(t,x)\exp(i\psi_l(t,x))\chi_l(t,x)
\end{align*}
We will need to be able to compute the integral
\begin{align*}
\int\limits_0^T\int\limits_{\mathcal{S}_0}q(t,x)U_{\lambda}(t,x)\overline{W}_{\lambda}(t,x)\chi_{\epsilon_1}(t,x) \,d_{g_0}V \,dt.
\end{align*}
If we transfer everything to local coordinates via the map $F^{-1}$, and consider the restriction to each $O_l$, then we will be able to make sense of the integral using the previously defined sets. Now we make Lemma \ref{step2} more explicit
\begin{lem}\label{step2}
There exists a constants $C_1,C_2$ independent of $\lambda$, such that 
\begin{align*}
\sabs{\int\limits_{t_{l-1}}^{t_{l+1}}\int\limits_{\tilde{O}_l}q(t,x)\sparen{\frac{\lambda}{\pi}}^{\frac{n}{2}}|a_{l}(t,x)|^2\exp(-2\lambda \Im(\psi_l(t,x)))\chi_l(t,x)\,dV_{g_0}\,dt-\int\limits_{t_{l-1}}^{t_{l+1}}q(t,x(t))\,dt}\leq &\\ \frac{C_1\lambda^{\sigma}\epsilon_1^{-\frac{1}{2\alpha}}}{\sqrt{\lambda}}+C_2\mathrm{erfc}(-\lambda^{2\sigma})
\end{align*}
\end{lem}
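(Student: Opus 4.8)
The plan is to prove Lemma \ref{step2} by freezing the time variable and applying the good-kernel estimate Lemma \ref{H} slice-by-slice in $t$, then integrating the resulting uniform error over the short interval $[t_{l-1},t_{l+1}]$. First I would pass to the Fermi chart $(\rho_l,O_l)$, where by (\ref{bichlocal}) the space-time curve is $x(t)=(t,0)+x_0$ and, by (\ref{oncurve}), the metric satisfies $g_{jk}|_\Gamma=\delta_{jk}$, so that $dV_{g_0}=\sqrt{\det g(x)}\,dx$ with $\det g(x(t))=1$. From Theorem \ref{gaussians} the phase in this chart has the form $\psi_l(t,x)=\omega(t)\cdot(x-x(t))+M_{jk}(t)(x-x(t))^j(x-x(t))^k+\mathcal{O}(|x-x(t)|^3)$ with $\Im M(t)$ positive definite, so that $\Im\psi_l(t,x)=\langle\Im M(t)(x-x(t)),(x-x(t))\rangle+\mathcal{O}(|x-x(t)|^3)$ vanishes precisely on the curve.

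Next, for each fixed $t$ I would apply Lemma \ref{H} to the inner $n$-dimensional spatial integral, taking $B$ proportional to $\Im M(t)$ (a symmetric nonsingular matrix with $\Re B\ge 0$) and $h(t,x)=q(t,x)|a_l(t,x)|^2\sqrt{\det g(x)}$, which lies in $C^1$ since $q\in C^1$, $a_l\in C^1$, and the metric is smooth. Lemma \ref{H} then controls the discrepancy
\begin{align*}
\sabs{\sparen{\frac{\lambda}{\pi}}^{\frac{n}{2}}\int_{\tilde O_l}q(t,x)|a_l(t,x)|^2\exp(-2\lambda\Im\psi_l(t,x))\chi_l(t,x)\,dV_{g_0}-\sparen{\det\Im M(t)}^{-\frac{1}{2}}|a_l(t,x(t))|^2q(t,x(t))}
\end{align*}
by $\sparen{2\lambda^{\sigma}\epsilon_1^{-\frac{1}{2\alpha}}\lambda^{-1/2}+4\mathrm{erfc}(-\lambda^{2\sigma})}\norm{h}_{C^1}$, uniformly in $t$. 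The reduction of the true phase to the exact Gaussian model demanded by Lemma \ref{H} is where care is needed: on $\mathrm{supp}\,\chi_l$ one has $|x-x(t)|\lesssim\epsilon_1^{1/(2n\alpha)}$, and by Corollary \ref{expsize} the weight obeys $\exp(-2\lambda\Im\psi_l)\sim\exp(-\lambda Cd_g^2(x,x(t)))$, so the integrand is non-negligible only for $|x-x(t)|\lesssim\lambda^{\sigma-1/2}$; there the cubic and higher remainders of $\Im\psi_l$ contribute only to the error term and to the implicit constants, exactly as in the derivation of Lemma \ref{H}.

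I would then identify the normalization as exactly $1$. By Corollary \ref{size}, in Fermi coordinates $|a_l(t,x(t))|^2=\frac{|\det Y(0)|}{|\det Y(t)|}\sparen{\frac{|g(0)|}{|g(x(t))|}}^{\frac{1}{2}}|a_0(t_0,x_0)|^2$, and with the initial data $\det Y(0)=1$ from (\ref{systemY}), $|a_0(t_0,x_0)|=1$ from Corollary \ref{boundarybeams}, and $\det g(x(t))=1$ from (\ref{oncurve}), this reduces to $|a_l(t,x(t))|^2=|\det Y(t)|^{-1}$. Lemma \ref{LKK} gives $\sparen{\det\Im M(t)}|\det Y(t)|^2=1$ under these initial conditions, whence $\sparen{\det\Im M(t)}^{-1/2}|a_l(t,x(t))|^2=1$ and the leading term is precisely $q(t,x(t))$.

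Finally I would integrate the uniform per-slice estimate over $t\in[t_{l-1},t_{l+1}]$. Since $|t_{l+1}-t_{l-1}|\le 2\epsilon$ and the error in Lemma \ref{H} is independent of $t$, the time integral of the bound is at most $\frac{C_1\lambda^{\sigma}\epsilon_1^{-\frac{1}{2\alpha}}}{\sqrt{\lambda}}+C_2\mathrm{erfc}(-\lambda^{2\sigma})$, with $C_1,C_2$ absorbing the interval length and $\norm{q|a_l|^2\sqrt{\det g}}_{C^1}$, and depending only on $g$, $\mathrm{diam}_g(\mathcal{M})$, and $\norm{q}_{C^1}$. The main obstacle, as noted above, is the passage from the genuine beam phase $\Im\psi_l$ to the model quadratic form required by Lemma \ref{H}, namely controlling the higher-order phase terms on the support of the cutoff; everything else is the slice-wise application of Lemma \ref{H} and the bookkeeping that turns the amplitude normalization into unity.
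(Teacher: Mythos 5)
Your proposal is correct and takes essentially the same route as the paper: pass to Fermi coordinates where the curve is $x(t)=(t,0)$, reduce the beam phase to the model quadratic Gaussian, apply the good-kernel estimate of Lemma \ref{H} slice-wise in $t$, and normalize the leading coefficient to $1$ via Corollary \ref{size}, the initial data of Corollary \ref{boundarybeams}, and Lemma \ref{LKK} --- exactly the combination the paper uses, with the normalization bookkeeping appearing in its proof of Lemma \ref{step21}, which the appendix argument parallels. Your explicit handling of the cubic remainder of $\Im\psi_l$ on the support of the cutoff is somewhat more detailed than the paper's terse reduction, but it is the same argument.
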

Making the change of variables to the local coordinates, we see it suffices to prove Lemma \ref{H} to finish Lemma \ref{step2}.  The proof of Lemma \ref{step2} reduces to showing that for $h(t,x)\in C^1((0,T')\times O)$ with $O$ an open subset of $\mathbb{R}^n$, $T'<\infty$ and sufficiently large $\lambda$ we have 
\begin{align*}
\sabs{\int\limits_{0}^{T'}\int\limits_{O}\sparen{\frac{\lambda}{\pi}}^{\frac{n}{2}}h(t,x)\exp(-2\lambda |x-x(t)|^2)\,dx\,dt-\int\limits_0^{T'}h(t,x(t))\,dt}\leq &\\ \frac{C_1\lambda^{\sigma}\epsilon_1^{-\frac{1}{2\alpha}}}{\sqrt{\lambda}}+C_2\mathrm{erfc}(-\lambda^{2\sigma})
\end{align*}
where $x(t)=(t,0)$ using the notation above. This is proved in Lemma \ref{H}. The identification of $r$ and $t$ suggests that the time dependent X-ray transform may not always be enough to determine time dependent potentials. We leave it as an open question:

\textbf{What information can be gained about the potentials from the time dependent X-ray transform?}
\renewcommand\refname{\large References}
\bibliography{aldenbib}
\end{document}